\documentclass[reqno,11pt]{amsart}

\usepackage{amsmath,amsfonts,amssymb,amsthm,epsfig,stix}

\voffset=-1.5cm \textheight=23cm \hoffset=-.5cm \textwidth=16cm
\oddsidemargin=1cm \evensidemargin=-.1cm
\footskip=35pt \linespread{1.10}
\parindent=20pt

\def\R{\mathbb R}
\def\N{\mathbb N}
\def\D{D}
\def\div{\operatorname{div}}
\def\dx{\ dx}

\def\dt{\ dt}

\numberwithin{equation}{section}
\textwidth15cm \textheight22cm \flushbottom
\oddsidemargin=0.5cm \evensidemargin=0.5cm
\footskip=35pt \linespread{1.10}
\parindent=20pt
\setcounter{MaxMatrixCols}{30}
\newtheorem{theorem}{Theorem}[section]

\newtheorem{corollary}[theorem]{Corollary}

\newtheorem{lemma}[theorem]{Lemma}

\theoremstyle{definition}
\begingroup

\endgroup

\begin{document}

\title[Electrostatic Born--Infeld equations]{The electrostatic Born--Infeld equations with integrable charge densities}
\author{A. Haarala}
\address[A. Haarala]{Matematiikan ja tilastotieteen laitos, Helsingin yliopisto, Helsinki, Finland}
\email[Akseli Haarala]{akseli.haarala@helsinki.fi}

\begin{abstract}
We study the minimizer of the electrostatic Born--Infeld energy
\begin{equation*}
\int_{\R^n}1-\sqrt{1-|\D v|^2}\dx-\int_{\R^n}\rho v\dx,
\end{equation*}
which vanishes at infinity. We show that the minimizer $u$ is strictly spacelike and it is a weak solution to
\begin{equation*}
-\div\Big(\frac{\D u}{\sqrt{1-|\D u|^2}}\Big)=\rho,
\end{equation*}
provided that $\rho$ is in the dual space of the solution space and $\rho\in L^p(\R^n)$, for some $p>n\geq 3$. Moreover, we have $u\in C^{1,\alpha}(\R^n)$ for some $\alpha\in (0,1)$.
\end{abstract}
\maketitle

{\small

\keywords{\noindent {\bf Keywords:} Born--Infeld energy, Lorentz mean curvature equation, maximal surfaces.
}

\smallskip

}
\bigskip


\section{Introduction}\label{section:intro}
The Lorentz--Minkowski space $\mathbb{L}^{n+1}$, $n\geq 2$, studied in the theory of special relativity, is the space $\R^{n+1}$ equipped with the non-degenerate bilinear form $\langle\cdot,\cdot\rangle_{\mathbb{L}^{n+1}}$ defined by
\begin{equation*}
\langle (x,t),(y,s)\rangle_{\mathbb{L}^{n+1}}:=\langle x,y\rangle_{\R^{n}}-ts, \quad\text{for all } t,s\in\R \text{ and } x,y\in\R^n,
\end{equation*}
where $\langle \cdot,\cdot\rangle_{\R^{n}}$ is the usual Euclidean inner product in $\R^n$. Let $\Omega\subset\R^n$ be a domain and let $u:\Omega\rightarrow\R$ be a continuously differentiable function. The graph $M:=\{(x,u(x))\in \mathbb{L}^{n+1}:x\in\Omega\}$ of $u$ is a hypersurface in $\mathbb{L}^{n+1}$. The tangent space of $M$ at $p\in M$ is denoted by $T_pM$ and can be identified with the subspace  
\begin{equation*}
T_pM=\big\{\big(x,\langle\D u(p),x\rangle\big)\in \mathbb{L}^{n+1}:x\in\R^n\big\}.
\end{equation*}
The bilinear form $\langle\cdot,\cdot\rangle_{\mathbb{L}^{n+1}}$ induces a bilinear form $\langle\cdot,\cdot\rangle_M$ on the tangent space $T_pM$ through the inclusion $T_pM\subset \mathbb{L}^{n+1}$, namely
\begin{equation*}
\langle X,Y\rangle_M:=\langle X,Y\rangle_{\mathbb{L}^{n+1}}, \quad\text{for all }X,Y\in T_pM.
\end{equation*}
The induced bilinear form defines a Riemannian metric on $M$ if and only if $|\D u|<1$ in $\Omega$. We use the following terminology introduced in \cite{BS}.
\begin{enumerate}
\item[i)] A function $u\in C^1(\Omega)$ is strictly spacelike if $|\D u|<1$ in $\R^n$;
\item[ii)] A function $u:\Omega\rightarrow\R$ is spacelike if $|u(x)-u(y)|<|x-y|$, for all points $x,y\in \Omega$ such that $x\not=y$ and that the line segment $[x,y]$ connecting $x$ and $y$ is contained in $\Omega$;
\item[iii)] A function $u\in W^{1,\infty}_{loc}(\Omega)$ is weakly spacelike if $|\D u|\leq 1$ a.e. in $\Omega$.
\end{enumerate}

Since the graph of any strictly spacelike function equipped with the induced metric is a Riemannian hypersurface, its area with respect to the Riemannian metric is well-defined and is given by
\begin{equation}\label{eq:area}
\int_\Omega \sqrt{1-|\D u|^2}\dx.
\end{equation}
Suppose $\Omega\subset\R^n$ is a bounded domain. One is immediately led to the natural problem of finding the area maximizing strictly spacelike hypersurface with given boundary data $\varphi:\partial\Omega\rightarrow\R$. This problem is analogous to the classical Plateau problem in the Euclidean space. Note that the area functional (\ref{eq:area}) is well-defined even for weakly spacelike hypersurfaces. When considering variational problems involving the area functional (\ref{eq:area}), it is convenient to work with the family of weakly spacelike functions due to its obvious compactness properties. Indeed, it is not difficult to prove that there exists a unique function $u:\Omega\rightarrow\R$ maximizing the area functional (\ref{eq:area}) amongst the admissible class
\begin{equation}\label{eq:admissible}
\mathcal{A}:=\big\{v\in W^{1,\infty}_{loc}(\Omega): v \text{ weakly spacelike and }v=\varphi \text{ on }\partial\Omega \big\},
\end{equation}
if it is non-empty. The boundary values should be understood in the following sense: we say that $v=\varphi$ on $\partial\Omega$ if
\begin{equation*}
\lim_{\substack{x\rightarrow x_0 \\ x\in L}}v(x)=\varphi(x_0),
\end{equation*}
for any $x_0\in\partial\Omega$ and any open line segment $L\subset\Omega$ with endpoint $x_0$. The maximizer $u$ is \textit{a priori} only weakly spacelike but it was proved in \cite{BS} that $u$ is strictly spacelike provided that there exists at least one spacelike function $v$ such that $v=\varphi$ on $\partial\Omega$. Moreover, the maximizer $u$ is a weak solution of the maximal surface equation
\begin{equation}\label{eq:maximal}
\div\Big(\frac{\D u}{\sqrt{1-|\D u|^2}}\Big)=0.
\end{equation}
The left hand side of (\ref{eq:maximal}) is the Lorentz mean curvature of the graph of $u$. Hypersurfaces with zero Lorentz mean curvature are called maximal surfaces. It was proved in \cite{C} ($n\leq 4$) and \cite{CY} ($n>4$) that maximal surfaces have the Bernstein property, that is, entire maximal surfaces are affine.

Let us consider the problem of prescribed Lorentz mean curvature. Let $H:\Omega\rightarrow\R$ be a bounded and measurable function. Define the variational integral
\begin{equation}\label{eq:prescribedcurvatureintegral}
I_H(u):=\int_\Omega \sqrt{1-|\D u|^2}\dx-\int_\Omega H u\dx.
\end{equation}
By the direct method in calculus of variations, there exists a unique function $u\in\mathcal{A}$ that maximizes (\ref{eq:prescribedcurvatureintegral}) amongst the family $\mathcal{A}$ (see \cite[Section 1]{BS} for details). It is not clear that the maximizer should be strictly spacelike or satisfy any differential equation. In \cite{BS} Bartnik and Simon studied the Lorentz mean curvature equation and proved in particular the following result which gives a somewhat complete picture of the regularity problem in the case of bounded mean curvature.

\begin{theorem}{(\cite[Corollary 4.2.]{BS})}\label{th:BS}
Let $\Omega\subset\R^n$ be a bounded domain and let $H:\Omega\rightarrow\R$ be a bounded and measurable function. Suppose that $\varphi:\partial\Omega\rightarrow\R$ has a weakly spacelike extension to $\Omega$. The unique maximizer $u$ of (\ref{eq:prescribedcurvatureintegral}) among the class (\ref{eq:admissible}) is strictly spacelike in $\Omega\setminus K$, where
\begin{equation*}
K:=\bigcup\big\{[x,y]:x,y\in\partial\Omega,\ (x,y)\subset\Omega,\ |\varphi(x)-\varphi(y)|=|x-y|\big\}.
\end{equation*}
Moreover, the maximizer $u$ satisfies the prescribed Lorentz mean curvature equation
\begin{equation}\label{eq:prescribedcurvature}
\div\Big(\frac{\D u}{\sqrt{1-|\D u|^2}}\Big)=H, \quad\text{in }\Omega\setminus K,
\end{equation}
in the weak sense.
\end{theorem}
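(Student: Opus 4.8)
The plan is to organise the argument in three steps: first, to locate the exceptional set by comparing the maximiser with light cones; second, on the complement of that set to upgrade $u$ from weakly spacelike to strictly spacelike by an interior gradient estimate; third, once $|\D u|<1$ holds locally, to read off the equation from the first variation. Steps one and three are soft; the interior gradient estimate in step two is the technical heart of the theorem, and I expect it to be the main obstacle.

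\emph{Step 1: locating $K$.} Let $u^-\le u^+$ be the pointwise infimum and supremum of all weakly spacelike extensions of $\varphi$; concretely $u^+$ is the lower envelope of the light cones $x\mapsto\varphi(z)+|x-z|$ and $u^-$ the upper envelope of $x\mapsto\varphi(z)-|x-z|$, $z\in\partial\Omega$ (with the intrinsic distance in $\overline\Omega$ replacing $|x-z|$ if $\Omega$ is not convex). Since the maximiser $u$ is itself a weakly spacelike extension, $u^-\le u\le u^+$. A direct computation with these formulas gives $\{u^-=u^+\}=K$: if $x\in[p,q]$ with $p,q\in\partial\Omega$, $(p,q)\subset\Omega$ and $|\varphi(p)-\varphi(q)|=|p-q|$, then $u^+(x)\le\varphi(p)+|x-p|$ and $u^-(x)\ge\varphi(q)-|x-q|$ force $u^+(x)-u^-(x)\le\varphi(p)-\varphi(q)+|p-q|=0$; conversely, equality $u^+(x)=u^-(x)$ singles out (near-)optimal boundary points realising such a light-like segment through $x$. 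Hence $u^+-u^->0$ on $\Omega\setminus K$, with a positive lower bound on every compact subset, and the same comparison shows that a light-like chord of $u$ with both endpoints on $\partial\Omega$ must lie in $K$; chords interior to $\Omega\setminus K$ will be excluded by Step 2, which shows $u$ is $C^1$ there.

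\emph{Step 2: strict spacelikeness.} The target estimate is: if $u$ is a local maximiser of $I_H$ on a ball $B_{2r}(x_0)\Subset\Omega$ (with its own boundary data) and $u^+-u^-\ge c>0$ on $B_{2r}(x_0)$, then $u$ is strictly spacelike on $B_r(x_0)$ with $(1-|\D u|^2)^{-1/2}\le C(n,\|H\|_\infty,r,c)$; applying this at each $x_0\in\Omega\setminus K$ yields the first assertion. I would prove the estimate by approximation: construct smooth strictly spacelike solutions $u_\epsilon$ of a regularised prescribed-curvature problem on smooth subdomains (mollifying $H$, using the leeway $u^+-u^->0$ to build barriers keeping $|\D u_\epsilon|$ away from $1$ in the interior, while a boundary gradient estimate from light cones controls it near $\partial\Omega$), with $u_\epsilon\to u$; then prove the a priori interior bound, uniform in $\epsilon$, for these smooth solutions. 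For the latter the relevant quantity is $v:=(1-|\D u_\epsilon|^2)^{-1/2}$, the time component of the (future-pointing) unit normal of the graph. Using the Gauss and Codazzi equations for a spacelike hypersurface of prescribed mean curvature, one obtains on the graph an elliptic differential inequality of the form $\Delta_M v\ge|A|^2v-C(\|H\|_\infty)v$; because $H$ is only bounded, the term involving $\D H$ cannot be handled pointwise and must be absorbed into a weak, difference-quotient version of the inequality — this is the delicate technical point I would expect to fight with. A De Giorgi--Nash--Moser iteration with a logarithmic cutoff (the Lorentzian counterpart of the Bombieri--De Giorgi--Miranda interior gradient estimate for minimal graphs) then bounds $v(x_0)$ in terms of $n$, $\|H\|_\infty$, $r$ and the leeway $c$; the dependence on $c$ is exactly what makes the estimate degenerate as $x_0$ approaches $K$, consistently with $u$ being merely weakly spacelike there. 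Passing $\epsilon\to0$ gives the bound for $u$.

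\emph{Step 3: the equation.} With $|\D u|<1$ holding locally uniformly on $\Omega\setminus K$, the integrand $1-\sqrt{1-|\D v|^2}$ is smooth in a neighbourhood of $\D u$ over each compact subset, so for $\phi\in C_c^\infty(\Omega\setminus K)$ the competitor $u+t\phi$ is admissible for $|t|$ small and $t\mapsto I_H(u+t\phi)$ is $C^1$ near $t=0$; maximality of $u$ forces
\begin{equation*}
0=\frac{d}{dt}\bigg|_{t=0}I_H(u+t\phi)=-\int_{\Omega}\frac{\D u\cdot\D\phi}{\sqrt{1-|\D u|^2}}\dx-\int_{\Omega}H\phi\dx,
\end{equation*}
which is precisely the weak form of (\ref{eq:prescribedcurvature}) in $\Omega\setminus K$. (Since the linearisation is then locally uniformly elliptic and $H\in L^\infty$, De Giorgi--Nash moreover gives $u\in C^{1,\alpha}_{loc}(\Omega\setminus K)$, though this is more than the theorem asserts.)
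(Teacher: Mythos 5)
Your Steps 1 and 3 are sound and close in spirit to the Bartnik--Simon argument that the paper sketches: the first-variation computation in Step 3 is exactly right once a locally uniform bound $|\D u|\leq 1-\delta$ is available on compact subsets of $\Omega\setminus K$, and the envelope characterization of $K$ in Step 1 correctly identifies $K$ as the set where the boundary data force degeneracy. The gap is in Step 2, precisely the step you flag as the technical heart. Your claimed estimate --- $u^+-u^-\geq c$ on $B_{2r}(x_0)$ implies $(1-|\D u|^2)^{-1/2}\leq C(n,\Vert H\Vert_\infty,r,c)$ on $B_r(x_0)$ --- requires converting a zeroth-order condition on the boundary data into a first-order bound on the maximizer, and your sketch never explains how $c$ actually enters the iteration. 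The differential inequality $\Delta_M v\geq |A|^2v-Cv$ on the graph, even granting a difference-quotient treatment of the $\D H$ term, only bounds $\sup v$ by an integral norm of $v$ over a ``ball''; to start the iteration one needs an a priori integral bound on $v$ over a region of controlled size, and the region that is actually available (the set $K_R(x_0)$ of points at Lorentzian distance at most $R$ appearing in (\ref{eq:BSderivativeestimate})) is compactly contained in $\Omega$ only if $u$ is already known to be coarsely spacelike near $x_0$, i.e. $|u(x)-u(y)|\leq(1-\theta)|x-y|$ for $|x-y|\geq r$. Nothing in your argument rules out that $u$, or your approximants $u_\epsilon$ non-uniformly in $\epsilon$, is lightlike along a segment lying entirely inside $\Omega\setminus K$: the envelope bounds $u^-\leq u\leq u^+$ are consistent with such a segment, since an interior lightlike segment need not touch $\partial\Omega$.

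The missing ingredient is the ``anti-peeling'' lemma of Bartnik--Simon (their Theorem 3.2, described in the paper right after the theorem statement): any segment on which the maximizer is lightlike extends to a lightlike chord between two boundary points, and hence lies in $K$. This is proved by comparison with the cone-like barriers $w_{a,\Lambda}$ of (\ref{eq:barriers}), not by an interior gradient estimate, and it is exactly what converts the gap $u^+-u^->0$ into coarse spacelikeness of $u$ and, uniformly, of the approximating sequence --- which in turn is what makes the derivative estimate (\ref{eq:BSderivativeestimate}) applicable with an $R$ independent of the approximation parameter. So your outline is not wrong so much as incomplete at its load-bearing point: either you must supply the anti-peeling/barrier argument, or you must give a genuinely different proof that an interior lightlike segment is incompatible with $u^+-u^->0$; the Simons-identity-plus-Moser route alone cannot do this, because the inequality for $v$ degenerates exactly where $v\rightarrow\infty$ and carries no information about $c$.
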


We will briefly describe the proof of Theorem \ref{th:BS}. Let us start by explaining the structure of the singular set $K$. One essential tool in proving the regularity of the maximizer is the "anti-peeling" lemma, see \cite[Theorem 3.2]{BS}. Suppose there exists a line segment $[x,y]\subset\Omega$ such that the maximizer $u$ is lightlike on $[x,y]$, that is
\begin{equation*}
u\big(x+t(y-x)\big)=u(x)+t|y-x|,
\end{equation*}
for all $t\in (0,1)$. Then $[x,y]$ can be extended into a line segment $[\tilde{x},\tilde{y}]$ between two boundary points $\tilde{x},\tilde{y}\in\partial\Omega$ such that $u$ is lightlike on $[\tilde{x},\tilde{y}]$. It follows immediately from the "anti-peeling" lemma that the maximizer is spacelike in $\Omega\setminus K$. The proof of the "anti-peeling" lemma is based on a barrier construction using the comparison principle (\cite[Lemma 1.2]{BS}) and the barrier functions given by
\begin{equation}\label{eq:barriers}
w_{a,\Lambda}(x)=\int_0^{|x|}\frac{a-\frac{\Lambda}{n}t^n}{\sqrt{t^{2(n-1)}+\big(a-\frac{\Lambda}{n}t^n\big)^2}}\dt,
\end{equation}
for $a\geq 0$ and $\Lambda\geq 0$. The radially symmetric function $w_{a,\Lambda}$ is strictly spacelike in $\R^n\setminus\{0\}$ and has a conelike singularity at the origin. It satisfies
\begin{equation*}
\div\Big(\frac{\D w_{a,\Lambda}}{\sqrt{1-|\D w_{a,\Lambda}|^2}}\Big)=-\Lambda+n\omega_n a\delta_0,
\end{equation*}
where $\omega_n$ is the volume of the unit ball in $\R^n$ and $\delta_0$ is the Dirac mass at the origin. Moreover, for fixed $\Lambda\geq 0$ the functions $w_{a,\Lambda}$ converge uniformly on compact sets to the cone $x\mapsto |x|$ as $a\rightarrow \infty$. 

One proves that for smooth mean curvature data $H$ and smooth, strictly spacelike boundary data $\varphi$ there exists a smooth strictly spacelike solution $u$ of the Dirichlet problem
\begin{equation}\label{300}
\begin{cases}
\div\Big(\frac{\D u}{\sqrt{1-|\D u|^2}}\Big)=H &\quad \text{in }\Omega;\\
u=\varphi & \quad \text{on }\partial \Omega. 
\end{cases}
\end{equation}
The proof is based on \textit{a priori} gradient estimate and Leray--Schauder fixed point theorem. For the proof of the \textit{a priori} gradient estimate, consider the quantity
\begin{equation}\label{eq:nu}
\nu=\frac{1}{\sqrt{1-|\D u|^2}}.
\end{equation}
Using the barrier functions defined in (\ref{eq:barriers}) it is possible to prove a boundary estimate for the quantity $\nu$ (see \cite[Corollary 3.4]{BS}). Note that the construction depends on the boundary data $\varphi$.

The boundary estimate can then be extended to a global estimate. Indeed, by linearizing the equation (\ref{300}) it can be proven that $\nu$ satisfies
\begin{equation}\label{eq:nueq}
\div\big(A\D\nu\big)\geq \langle \D H,\D u\rangle,
\end{equation}
for a certain coefficient matrix function $A$. The ratio of the biggest and the smallest eigenvalue of $A$ is controlled by $\nu^2$. Using the equation (\ref{eq:nueq}), it is possible to derive a Cacciappoli type inequality for $\nu$. The Cacciappoli type inequality is combined with Sobolev inequality to prove self-improvement estimate which can be iterated and combined with an $L^2$-estimate to derive a global bound for $\nu$ (see \cite[Theorem 3.5]{BS}).

Once the \textit{a priori} estimate on the gradient of the solution is established, the equation (\ref{300}) essentially reduces to a uniformly elliptic equation. The regularity theory of uniformly elliptic equations is well-known. A fixed point argument using Leray-Schauder fixed point theorem proves the existence of a strictly spacelike smooth solution $u$ of the Dirichlet problem (\ref{300}) with smooth data (see \cite[Theorem 3.6]{BS}).

The smooth and strictly spacelike solutions of (\ref{300}) also satisfy the following \textit{a priori} derivative estimate (see \cite[Lemma 2.1]{BS}). Let $x_0\in \Omega$. Then there exists $\gamma=\gamma(n)\in(0,\frac{1}{n})$ and $C=C(n)>0$ such that we have 
\begin{equation}\label{eq:BSderivativeestimate}
R^{-n}\int_{K_R(x_0)}\nu^{-(\gamma+1)}\dx+R^{2-n}\int_{K_R(x_0)}|\D^2 u|^2\dx\leq C\exp\big(1+R^2\Vert H\Vert_\infty\big)\nu(x_0)^{-\gamma},
\end{equation}
for all $R>0$, such that $K_{2R}(x_0)\subset\subset \Omega$, where
\begin{equation}
K_R(x_0):=\big\{x\in\Omega: [x,x_0]\subset\Omega, (|x-x_0|^2-|u(x)-u(x_0)|^2)^\frac{1}{2}\leq R\big\}.
\end{equation}
It is clear that $B_R(x_0)\subset K_R(x_0)$ if $K_R(x_0)\subset\subset \Omega$. Note that $K_R(x_0)$ does not depend on the infinitesimal properties of $u$ but rather on how $u$ behaves on fixed scales. 

Another important consequence of the "anti-peeling" lemma is that the uniformly converging sequence $(u_k)_{k=1}^\infty$ of spacelike functions, with uniformly bounded Lorentz mean curvature, whose limit is spacelike, is "uniformly coarsely spacelike". Namely, for any $r>0$ there exists $\theta>0$ such that
\begin{equation*}
\sup_{\substack{x,y\in\Omega \\ |x-y|>r}}\frac{|u_k(x)-u_k(y)|}{|x-y|}\leq 1-\theta,
\end{equation*}
for all $k\in\N$ (see \cite[Theorem 3.3]{BS}). 

By a standard approximation argument the maximizer $u$ can be approximated locally in $\Omega\setminus K$ by smooth strictly spacelike functions $u_k$, $k\in\N$, with uniformly bounded Lorentz mean curvature. Since the maximizer is spacelike the approximating functions are "uniformly coarsely spacelike". Thus the estimate (\ref{eq:BSderivativeestimate}) holds for all approximating functions $u_k$, $k\in\N$, in some small neighbourhood of a point $x_0\in\Omega$ with some $R_0>0$ independent of $k$. It follows from (\ref{eq:BSderivativeestimate}) that if $|\D u_k|$ is close to $1$ at a point then the same holds in a small neighbourhood of the point. The proof proceeds to show that the $|\D u_k|$ is locally uniformly bounded away from $1$. Indeed, if that is not the case, then the derivative estimate (\ref{eq:BSderivativeestimate}) shows that the maximizer $u$ must be affine with slope $1$ in some small ball. This is not possible, since it was already concluded that $u$ is spacelike in $\Omega\setminus K$. This provides a derivative bound for the minimizer $u$. The regularity of $u$ follows immediately by standard methods for uniformly elliptic quasilinear equations.

\bigskip
\noindent
The prescribed mean curvature equation discussed above appears also in a different context in physics. In the rest of the paper we assume $n\geq 3$. The classical Maxwell's equations lead to the unsatisfying conclusion that the energy of the electro-magnetic field generated by a point charge is infinite. To remedy this problem  Born (\cite{B1},\cite{B2}) and Born and Infeld (\cite{BI1},\cite{BI2}) proposed a new nonlinear electrodynamical model. In the classical Maxwell's theory the electromagnetic Lagrangian is given by
\begin{equation*}
L_M:=\frac{1}{2}\big(|E|^2-|B|^2\big),
\end{equation*}
where $B:\mathbb{L}^{3+1}\rightarrow\R^3$ is the magnetic field and $E:\mathbb{L}^{3+1}\rightarrow\R^3$ is the electric field. Born and Infeld proposed that $L_M$ be replaced by the Lagrangian
\begin{equation*}
L_{BI}:=b^2\Big(1-\sqrt{1+\frac{1}{b^2}\big(|B|^2-|E|^2\big)-\frac{1}{b^4}\langle B,E\rangle^2}\Big),
\end{equation*}
where $b>0$ is the absolute field constant. Let us consider the electrostatic case, that is, assume $B\equiv 0$ and $E$ independent of the time variable. By Faraday's law, there exists an electrostatic potential $\phi$ such that $E=\D \phi$, and the Lagrangian $L_{BI}$ reduces to $b^2(1-\sqrt{1-|\D \phi|^2/b^2})$. Given a charge distribution $\rho$ the electrostatic potential $\phi$ therefore minimizes the energy
\begin{equation*}
\int_{\R^n}b^2\Big(1-\sqrt{1-\frac{|\D \phi|^2}{b^2}}\Big)dx-\int_{\R^n}\rho\phi,
\end{equation*}
which is essentially the same as the variational integral (\ref{eq:prescribedcurvatureintegral}) introduced in the prescribed Lorentz mean curvature problem. We refer to \cite{BI2} and the introduction of \cite{BdAP} for more thorough discussion on the physics of the problem.

We follow the mathematical formulation of the problem given in \cite{BdAP}. Define the family
\begin{equation*}
\mathcal{X}:=\big\{v\in D^{1,2}_0(\R^n):|\D v|\leq 1 \text{ a.e. in }\R^n\big\},
\end{equation*}
where $D^{1,2}_0(\R^n)$ is the completion of $C_c^\infty(\Omega)$ with respect to the norm
\begin{equation*}
\Vert u\Vert_{D^{1,2}_0(\Omega)}:=\Big(\int_{\R^n}|\D u|^2\dx\Big)^\frac{1}{2}.
\end{equation*}
Let $\rho\in \mathcal{X}^*$ be a given charge distribution in the dual of $\mathcal{X}$. There exists a unique $u\in\mathcal{X}$ that minimizes
\begin{equation}\label{eq:variationalintegral}
I_\rho(v):=\int_{\R^n}1-\sqrt{1-|\D v|^2}dx-\langle\rho,v\rangle
\end{equation}
amongst $\mathcal{X}$ (see \cite[Proposition 2.3]{BdAP}). If the minimizer $u$ is strictly spacelike, it is easy to prove that $u$ satisfies the equation
\begin{equation}\label{eq:mainequation}
-\div\Big(\frac{\D u}{\sqrt{1-|\D u|^2}}\Big)=\rho \quad\text{ in }\R^n,
\end{equation}
in the weak sense. However, it is not \textit{a priori} clear that $u$ should be strictly spacelike or satisfy (\ref{eq:mainequation}). Any weak solution $u\in\mathcal{X}$ of (\ref{eq:mainequation}) is also a minimizer of (\ref{eq:variationalintegral}).

Actually, little is known about the regularity of the minimizer in general. By Theorem \ref{th:BS} the minimizer $u$ is stricly spacelike if $\rho\in \mathcal{X}^*\cap L^\infty_{loc}(\R^n)$ (see \cite[Theorem 1.5]{BdAP} for details). It is also known that the minimizer is a weak solution of (\ref{eq:mainequation}) if $\rho$ is radially distributed (see \cite[Theorem 1.4]{BdAP}). A natural question, given the physical origin of the problem, is what happens when $\rho$ is a finite linear combination of Dirac masses, i.e.  $\rho=\sum_{k=1}^N a_k\delta_{x_k}$ for some $N\in\N$, $a_k\in\R$, $x_k\in\R^n$, $k=1,\dots,N$. This corresponds to the electrostatic potential generated by finitely many point charges. It has been proven (see \cite[Theorem 1.6]{BdAP}) that the minimizer is strictly spacelike in $\R^n\setminus\{x_1,\dots,x_N\}$ and satisfies (\ref{eq:mainequation}), if the charges $a_k$ are small enough or the points $x_k$ are far enough from each other. See \cite{BdAP} for further discussion. In the case of a single (positive) point charge the solution is given by the function $w_{a,0}$, defined in (\ref{eq:barriers}), where $a$ corresponds to the magnitude of the charge. Hence, the electrostatic potential generated by a single point charge at origin is strictly spacelike in $\R^n\setminus\{0\}$ and has a singularity at the origin. The regularity of the minimizer in the general case of finitely many point charges is an interesting open problem.

The following result was proven in \cite{BI}, see (\cite[Theorem 1.5]{BI}) and (\cite[Theorem 1.6]{BI}).

\begin{theorem}\label{th:BI}
Let $\rho\in L^{q}(\R^n)\cap L^{m}(\R^n)$, for some $q>2n$ and $m\in[1,2n/(n+2)]$. Let $u\in\mathcal{X}$ be the unique minimizer of (\ref{eq:variationalintegral}) in $\mathcal{X}$. Then $u\in W^{2,2}_{loc}(\R^n)$.

Moreover, there exists $c=c(n,q,m)>0$ such that if $\Vert \rho\Vert_q+\Vert\rho\Vert_m<c$ then $u$ is strictly spacelike, $u$ is a weak solution of the equation (\ref{eq:mainequation}) and $u\in C^{1,\alpha}_{loc}(\R^n)$ for some $\alpha=\alpha(n,m,q)\in(0,1)$.
\end{theorem}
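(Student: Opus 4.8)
The plan is to obtain $u$ as the limit of smooth strictly spacelike solutions of regularised Dirichlet problems on expanding balls, transferring to the limit the Bartnik--Simon a priori estimates in a scale-invariant form adapted to $L^q$ curvature data. Interpolating between $L^m$ and $L^q$ gives $\rho\in L^s(\R^n)$ for every $s\in[m,q]$, in particular $\rho\in L^{2n/(n+2)}(\R^n)\subset\mathcal X^*$; so we may assume $\rho\in\mathcal X^*\cap L^q(\R^n)$ with $q>2n$. Let $\rho_\varepsilon$ be standard mollifications, so that $\|\rho_\varepsilon\|_{L^q}\le\|\rho\|_{L^q}$ and $\|\rho_\varepsilon\|_{\mathcal X^*}\le C\|\rho\|_{L^{2n/(n+2)}}$ uniformly in $\varepsilon$, and $\rho_\varepsilon\to\rho$ in $\mathcal X^*$. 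For $R>0$ let $u_{\varepsilon,R}$ be the maximiser of $I_{\rho_\varepsilon}$ over the weakly spacelike functions on $B_R$ vanishing on $\partial B_R$. Since $\rho_\varepsilon\in C^\infty\cap L^\infty(\R^n)$ and the zero boundary datum is trivially strictly spacelike, so that the singular set $K$ of Theorem~\ref{th:BS} is empty, the machinery recalled after Theorem~\ref{th:BS} --- in particular \cite[Theorem~3.6]{BS} --- shows that $u_{\varepsilon,R}$ is smooth and strictly spacelike in $B_R$ and solves $-\div(\D u_{\varepsilon,R}/\sqrt{1-|\D u_{\varepsilon,R}|^2})=\rho_\varepsilon$ in $B_R$.

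Two uniform estimates will be needed. Extending $u_{\varepsilon,R}$ by zero and comparing with the competitor $v\equiv0$ gives $I_{\rho_\varepsilon}(u_{\varepsilon,R})\le0$, so, using $1-\sqrt{1-t^2}\ge t^2/2$ and Sobolev's inequality,
\begin{equation*}
\tfrac12\|u_{\varepsilon,R}\|_{D^{1,2}_0(\R^n)}^2\le\langle\rho_\varepsilon,u_{\varepsilon,R}\rangle\le C\|\rho\|_{L^{2n/(n+2)}}\|u_{\varepsilon,R}\|_{D^{1,2}_0(\R^n)},
\end{equation*}
hence $\|u_{\varepsilon,R}\|_{D^{1,2}_0(\R^n)}\le C\|\rho\|_{L^{2n/(n+2)}}$, uniformly in $\varepsilon,R$. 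The second, and main, estimate is a scale-invariant second-derivative bound. Estimate (\ref{eq:BSderivativeestimate}) cannot be used as it stands, since $\|\rho_\varepsilon\|_\infty$ is not controlled as $\varepsilon\to0$; instead one re-runs the Caccioppoli--Sobolev iteration underlying \cite[Theorem~3.5, Lemma~2.1]{BS}, keeping the curvature throughout in the subcritical scale-invariant form $r^{1-n/q}\|H\|_{L^q(K_{2r}(x_0))}$ in place of $r^2\|H\|_\infty$, and absorbing at each step, via H\"older's inequality, the powers of $\nu$ that appear --- it is precisely this that forces $q>2n$. The outcome is: for smooth strictly spacelike solutions on $B_R$, whenever $K_{2r}(x_0)\subset\subset B_R$ and $r\le r_0$, where $r_0=r_0(n,q,\|\rho\|_{L^q})$ is fixed by $r_0^{1-n/q}\|\rho\|_{L^q}\le1$ (so $r_0\to\infty$ as $\|\rho\|_{L^q}\to0$),
\begin{equation*}
r^{-n}\int_{K_r(x_0)}\nu^{-(\gamma+1)}\dx+r^{2-n}\int_{K_r(x_0)}|\D^2u|^2\dx\le C(n,q)\,\nu(x_0)^{-\gamma}\le C(n,q).
\end{equation*}
Applied to the $u_{\varepsilon,R}$ (which are strictly spacelike), with $\|\rho_\varepsilon\|_{L^q}\le\|\rho\|_{L^q}$, this is a uniform $W^{2,2}$ bound on every fixed compact subset of $\R^n$, for $R$ large.

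A diagonal subsequence of $(u_{\varepsilon,R})$, extended by zero, then converges to some $\tilde u$: weakly in $D^{1,2}_0(\R^n)$, weakly in $W^{2,2}_{loc}(\R^n)$, strongly in $W^{1,2}_{loc}$, and --- being uniformly Lipschitz --- in $C^0_{loc}$. Thus $\tilde u\in\mathcal X\cap W^{2,2}_{loc}(\R^n)$, and by lower semicontinuity of the convex area integral, together with $\rho_\varepsilon\to\rho$ in $\mathcal X^*$ and a routine truncation argument (elements of $\mathcal X$ vanish at infinity, so the maps $v\mapsto\operatorname{sign}(v)\max(0,|v|-\tau)$ give compactly supported weakly spacelike competitors with $I_\rho$ arbitrarily close to $I_\rho(v)$), $\tilde u$ minimises $I_\rho$ over $\mathcal X$; by uniqueness \cite[Proposition~2.3]{BdAP}, $\tilde u=u$. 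This proves $u\in W^{2,2}_{loc}(\R^n)$, the first assertion. Suppose now $\|\rho\|_q+\|\rho\|_m<c$. Then $r_0$ may be taken as large as we wish by taking $c$ small, while $\|u_{\varepsilon,R}\|_{D^{1,2}_0}\le C_1c$. If $\nu_{\varepsilon,R}(x_0)$ were very large (with $x_0$ in a fixed compact set and $R$ large), the displayed estimate would force $\{x\in B_{r_0}(x_0):\nu_{\varepsilon,R}(x)\le2\}$ to occupy an arbitrarily small fraction of $B_{r_0}(x_0)$, whence $\int_{B_{r_0}(x_0)}|\D u_{\varepsilon,R}|^2\ge\tfrac12\omega_nr_0^n$; choosing $c$ so small that $C_1^2c^2<\tfrac12\omega_nr_0(c)^n$ --- possible since $r_0(c)$ does not shrink as $c\to0$ --- contradicts the energy bound. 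Hence $\nu_{\varepsilon,R}\le M_0=M_0(n,q,m)$ on compact sets, so in the limit $|\D u|\le\sqrt{1-M_0^{-2}}<1$: $u$ is strictly spacelike. The passage to the limit in the now locally bounded flux $\D u_{\varepsilon,R}/\sqrt{1-|\D u_{\varepsilon,R}|^2}$ is then immediate, so $u$ weakly solves (\ref{eq:mainequation}); and since $u_k:=\partial_ku$ satisfies
\begin{equation*}
\int_{\R^n}a^{ij}(\D u)\,\partial_ju_k\,\partial_i\psi\dx=\int_{\R^n}\rho\,\partial_k\psi\dx,\qquad a^{ij}(\xi):=\frac{\delta_{ij}}{\sqrt{1-|\xi|^2}}+\frac{\xi_i\xi_j}{(1-|\xi|^2)^{3/2}},
\end{equation*}
with $a^{ij}(\D u)$ uniformly elliptic and bounded on compacta and the right-hand side in divergence form with datum $\rho e_k\in L^q_{loc}$, $q>n$, the De Giorgi--Nash--Moser estimates give $u_k\in C^{0,\alpha}_{loc}$, i.e.\ $u\in C^{1,\alpha}_{loc}(\R^n)$ with $\alpha=\alpha(n,m,q)$.

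The main obstacle is the scale-invariant second-derivative estimate above: one must redo the Bartnik--Simon Moser iteration with the supremum of the curvature replaced by its subcritical scale-invariant $L^q$-norm and absorb, at each step, the powers of $\nu$ produced in the intermediate inequalities --- this is exactly what pins down the hypothesis $q>2n$. The second delicate point, and the only place where the smallness is genuinely used --- recall that for large point charges the minimiser is not strictly spacelike --- is excluding $\nu\to\infty$, i.e.\ the incipient formation of a light cone, which we do by playing the scale-invariant estimate against the small-energy bound.
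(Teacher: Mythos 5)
This statement is quoted from Bonheure--Iacopetti \cite{BI} (their Theorems 1.5 and 1.6); the present paper gives no proof of it, only a one-paragraph description of the strategy. Your proposal follows essentially that same strategy: approximate by smooth strictly spacelike solutions of regularised problems, run the Bartnik--Simon Caccioppoli--Sobolev iteration with the curvature measured in the scale-invariant subcritical norm $r^{1-n/q}\Vert\rho\Vert_{L^q}$ (this is \cite[Proposition 3.1]{BI}, and is where $q>2n$ enters), pass to the limit to get $W^{2,2}_{loc}$, and then play the resulting estimate against the small $D^{1,2}_0$-energy bound to exclude $\nu\to\infty$ and obtain strict spacelikeness, after which the $C^{1,\alpha}_{loc}$ regularity is standard De Giorgi--Nash--Moser for the differentiated equation. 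The only caveat is that the central scale-invariant second-derivative estimate is asserted and motivated rather than derived; it is the entire technical content of the theorem, but your description of it (the correct scaling, the reason the exponent threshold is $2n$ rather than $n$, and the way it is used) is accurate and matches the cited source.
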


In \cite{BI} the authors conjectured that the minimizer $u\in C_{loc}^{1,\alpha}(\R^n)$ for some $\alpha\in (0,1)$ if $\rho\in\mathcal{X}^*\cap L_{loc}^p(\R^n)$ for some $p>n$. Our main result proves the conjecture in the case $\rho\in \mathcal{X}^*\cap L^p(\R^n)$, $p>n$.

\begin{theorem}\label{th:main}
Let $\rho\in \mathcal{X}^*\cap L^p(\R^n)$ for some $p>n$. Let $u\in \mathcal{X}$ be the unique minimizer of (\ref{eq:variationalintegral}) in $\mathcal{X}$. Then there exists $\theta=\theta(n,p,\Vert\rho\Vert_{\mathcal{X}^*},\Vert\rho\Vert_p)>0$ such that
\begin{equation*}
\Vert\D u\Vert_\infty\leq 1-\theta.
\end{equation*}
Also, there exists $\alpha=\alpha(n,p,\theta)>0$ such that $u\in C^{1,\alpha}(\R^n)$ and 
\begin{equation*}
\Vert u\Vert_{C^{1,\alpha}(\R^n)}\leq C.
\end{equation*}
where $C=C(n,p,\theta,\Vert\mu\Vert_{\mathcal{X}^*},\Vert\mu\Vert_p)>0$.
\end{theorem}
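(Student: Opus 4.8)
\medskip
\noindent\emph{Sketch of the argument.}
The plan is to establish both estimates first under the additional assumption $\rho\in C_c^\infty(\R^n)$, with constants depending only on $n,p,\|\rho\|_{\mathcal X^*}$ and $\|\rho\|_p$, and then to remove it by approximation. Here one uses that $C_c^\infty(\R^n)$ is dense in $\mathcal X^*\cap L^p(\R^n)$ for the intersection norm: mollification increases neither $\|\cdot\|_{\mathcal X^*}$ nor $\|\cdot\|_p$, and a cut-off $\rho\chi_R$ converges to $\rho$ in $L^p$ by dominated convergence and in $\mathcal X^*$ because $\mathcal X^*=D^{1,2}_0(\R^n)^*$ (for Lipschitz $v\in D^{1,2}_0$ one has $v/\|\D v\|_\infty\in\mathcal X$, whence $|\langle\rho,v\rangle|\le\|\rho\|_{\mathcal X^*}\|\D v\|_{L^2}$, so writing $\rho=-\Delta\psi$ the tail is controlled by $\|\D\psi\|_{L^2(\R^n\setminus B_R)}\to0$). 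If $\rho_j\to\rho$ in both norms, the minimizers $u_j$ of $I_{\rho_j}$ are locally equi-Lipschitz (as $|\D u_j|\le1$) and bounded in $D^{1,2}_0(\R^n)$ by the energy estimate below, hence $u_j\to u$ locally uniformly by lower semicontinuity of $I_\rho$ and uniqueness of the minimizer; since the constants $\theta$ and $C$ obtained below are monotone in $\|\rho\|_{\mathcal X^*}$ and $\|\rho\|_p$, the estimates pass to $u$.

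Assume then $\rho\in C_c^\infty(\R^n)$, $\rho\not\equiv0$. Since $\rho\in\mathcal X^*\cap L^\infty_{loc}(\R^n)$, \cite[Theorem~1.5]{BdAP} makes the minimizer $u$ strictly spacelike and a weak solution of \eqref{eq:mainequation}, hence (elliptic bootstrap, $\rho$ smooth) $u\in C^\infty(\R^n)$; set $\nu=(1-|\D u|^2)^{-1/2}\ge1$. From $I_\rho(u)\le I_\rho(0)=0$ and $1-\sqrt{1-s}\ge s/2$ on $[0,1]$ we get $\tfrac12\|\D u\|_{L^2(\R^n)}^2\le\langle\rho,u\rangle\le\|\rho\|_{\mathcal X^*}\|\D u\|_{L^2(\R^n)}$, so $\|\D u\|_{L^2(\R^n)}\le 2\|\rho\|_{\mathcal X^*}=:\sqrt{E_0}$. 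Moreover, since $|\D u|\le1$ the equality $|u(x_0)|=M$ forces $|u|\ge M/2$ on $B_{M/2}(x_0)$, so combined with $u\in L^{2^*}(\R^n)$ an elementary argument gives $u(x)\to0$ as $|x|\to\infty$ and $\|u\|_{L^\infty(\R^n)}\le C(n)\|\rho\|_{\mathcal X^*}^{2/n}$. In particular $K_R(x_0)\subset B_{\sqrt{R^2+4\|u\|_\infty^2}}(x_0)$, so $K_R(x_0)$ is bounded and $|K_R(x_0)|$ is controlled in terms of $n$, $R$ and $\|\rho\|_{\mathcal X^*}$.

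The core of the proof is a variant of the Bartnik--Simon derivative estimate \eqref{eq:BSderivativeestimate} in which $\|H\|_\infty$ is replaced by a \emph{local} $L^p$-norm of $\rho$. Following the proof of \cite[Lemma~2.1]{BS} for the smooth strictly spacelike $u$ (restricted to a large ball, on which it solves the Dirichlet problem, so that the estimate is an interior one), the forcing term, which after the linearization \eqref{eq:nueq} enters through $\langle\D H,\D u\rangle$ with $H=-\rho$, is integrated by parts and rewritten using $\Delta u=\nu^{-1}(H-\D\nu\cdot\D u)$ as the sum of a divergence term $\div G$ with $|G|\le|\rho|$, a drift term $b\cdot\D\nu$ with $|b|\le|\rho|$, and a zeroth-order source $g$ with $|g|\le\rho^2$. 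Since $p>n$, $\rho\in L^p$ and $\rho^2\in L^{p/2}$ are subcritical (conjugate exponents $<2$), so these terms are absorbed by the Cacciappoli--Sobolev iteration of \cite{BS} exactly as the lower-order terms already present there, the sole effect being that $\exp(1+R^2\|H\|_\infty)$ is replaced by $\exp\!\big(C(n,p)(1+\Phi)\big)$ with $\Phi$ a finite quantity depending only on $R$, $\|\rho\|_{L^p(K_{2R}(x_0))}$ and $|K_{2R}(x_0)|$. Making this rigorous is the main difficulty of the argument, and it is precisely at this point that the hypothesis $p>n$ is used, in accordance with the conjecture of \cite{BI}.

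Granting the modified estimate, the gradient bound follows by a spreading argument played against the energy bound. Put $R_*:=(2E_0/\omega_n)^{1/n}$, so that $\omega_nR_*^n=2E_0$; by the previous paragraph $|K_{2R_*}(x_0)|\le C(n)(1+\|\rho\|_{\mathcal X^*})^n$ and $\|\rho\|_{L^p(K_{2R_*}(x_0))}\le\|\rho\|_p$ uniformly in $x_0$, so the modified estimate at scale $R_*$ gives, with $C_1=C_1(n,p,\|\rho\|_{\mathcal X^*},\|\rho\|_p)$ and $\gamma=\gamma(n)\in(0,1/n)$,
\[
\int_{B_{R_*}(x_0)}\big(1-|\D u|^2\big)\dx\le\int_{K_{R_*}(x_0)}\nu^{-(\gamma+1)}\dx\le C_1R_*^n\,\nu(x_0)^{-\gamma},
\]
using $B_{R_*}(x_0)\subset K_{R_*}(x_0)$ and $\nu^{-2}\le\nu^{-(\gamma+1)}$. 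Since the left-hand side equals $\omega_nR_*^n-\|\D u\|_{L^2(B_{R_*}(x_0))}^2\ge\omega_nR_*^n-E_0=E_0$, we obtain $E_0\le C_1R_*^n\nu(x_0)^{-\gamma}$, i.e. $\nu(x_0)^\gamma\le C_1R_*^n/E_0=2C_1/\omega_n$; as $x_0$ is arbitrary, $\sup_{\R^n}\nu\le(2C_1/\omega_n)^{1/\gamma}=:\nu_0$, that is $\|\D u\|_\infty\le1-\theta$ with $\theta:=1-\sqrt{1-\nu_0^{-2}}\ge\tfrac12\nu_0^{-2}>0$. Finally, with $\|\D u\|_\infty\le1-\theta$ the equation \eqref{eq:mainequation} becomes uniformly elliptic with continuous coefficients depending only on $\theta$ and right-hand side $\rho\in L^p(\R^n)$, $p>n$; interior $W^{2,p}$ estimates then yield $u\in W^{2,p}_{loc}(\R^n)$ with $\|D^2u\|_{L^p(B_1(x))}\le C(n,p,\theta)(1+\|\rho\|_p)$ uniformly in $x$, hence $[\D u]_{C^{0,\alpha}(\R^n)}\le C(n,p,\theta)(1+\|\rho\|_p)$ with $\alpha=1-n/p$ by Morrey's embedding; together with $\|\D u\|_\infty\le1$ and $\|u\|_{L^\infty(\R^n)}\le C(n)\|\rho\|_{\mathcal X^*}^{2/n}$ this gives $\|u\|_{C^{1,\alpha}(\R^n)}\le C(n,p,\theta,\|\rho\|_{\mathcal X^*},\|\rho\|_p)$. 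Passing both estimates to the limit as in the first paragraph completes the proof.
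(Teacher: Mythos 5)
Your scaffolding (reduce to $\rho\in C_c^\infty(\R^n)$, prove a gradient bound $\Vert\D u\Vert_\infty\leq 1-\theta$ with $\theta$ depending only on $n,p,\Vert\rho\Vert_{\mathcal X^*},\Vert\rho\Vert_p$, conclude $C^{1,\alpha}$ from uniformly elliptic theory, and pass to the limit) matches the paper's. But the step you call ``the core of the proof'' and then explicitly grant --- a version of the Bartnik--Simon estimate \eqref{eq:BSderivativeestimate} in which $\Vert H\Vert_\infty$ is replaced by a local $L^p$-norm of $\rho$ entering only \emph{multiplicatively}, as $\exp\big(C(n,p)(1+\Phi)\big)$ --- is precisely the difficulty the theorem is about, not a routine modification. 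This is the route of \cite{BI}: there the analogue of \eqref{eq:BSderivativeestimate} acquires an \emph{additive} error term controlled by norms of $\rho$, and that is exactly why \cite{BI} must assume those norms small. Your spreading argument requires the error-free form $\int_{K_{R_*}}\nu^{-(\gamma+1)}\dx\leq C_1R_*^n\nu(x_0)^{-\gamma}$; with an additive remainder the deduction $\nu(x_0)^{-\gamma}\geq E_0/(C_1R_*^n)$ fails unless the remainder is small. Since you offer no argument for why the drift, divergence and source terms produced by $\rho\in L^p$ are ``absorbed exactly as the lower-order terms already present,'' the proof has a genuine gap at its central point. The paper proves the gradient bound by a different mechanism that never uses \eqref{eq:BSderivativeestimate}: it bounds $\nu$ from \emph{above} by a Moser iteration on the linearized inequality for $\nu$ (Theorem \ref{th:nuestimate}), and closes the iteration with a global $L^p$ bound for $(\nu-k)_+$ obtained by testing the equation with $u\nu^{-1}(\nu-k)_+^p$ (Theorem \ref{th:integrability}); no smallness enters because the output is a bound on $\sup\nu$ rather than a lower bound on $\nu^{-1}$ propagated from a point.

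A secondary, structural remark: for smooth compactly supported data you bypass the paper's Schauder fixed point construction (Theorem \ref{th:smoothexistence}) by invoking \cite[Theorem 1.5]{BdAP} plus local bootstrap and applying the a priori estimates directly to the minimizer. Be aware that the paper's global integral estimates are stated for solutions in $D_0^{1,2}(\R^n)\cap C_b^2(\R^n)$ and use that $\nu\to1$ at infinity, which is deduced from $\D u\in L^2(\R^n)\cap C_b^1(\R^n)$, i.e.\ from global control of $\D^2u$ that is not available for the minimizer before the gradient bound is established; this is the reason the paper constructs a solution in that class and identifies it with the minimizer by uniqueness. Your local reduction might survive this (your modified estimate is an interior one), but since that estimate is the part left unproved, the argument as written does not establish the theorem.
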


The main derivative estimate in \cite{BI} (see \cite[Proposition 3.1]{BI}) is based on the proof of the derivative estimate (\ref{eq:BSderivativeestimate}). The essential difference between the derivative estimate in \cite{BI} and (\ref{eq:BSderivativeestimate}) is an error term depending on the $L^q$ norm of $\rho$. To prove that the minimizer is strictly spacelike the the error term is required to be small. The assumption on the smallness of the $L^m$ and $L^q$ norms of $\rho$ in Theorem \ref{th:BI} comes from this requirement.

The proof of Theorem \ref{th:main} is based on \textit{a priori} derivative bound and Leray--Schauder fixed point theorem. In Section \ref{section:Apriori} we prove \textit{a priori} bound for the derivatives of smooth solutions of (\ref{eq:mainequation}) in terms of the $L^p$ norm of $\rho$ (see Theorem \ref{th:derivativeapriori}). The proof of our \textit{a priori} derivative bound is based on the ideas of \cite[Theorem 3.5]{BS} and can be divided into two parts. First, we prove that the supremum of the quantity $\nu$, defined in (\ref{eq:nu}), can be controlled locally by the $L^p$ norms of $\nu$ and $\rho$ (see Theorem \ref{th:nuestimate}). The proof is based on a Cacciappoli type inequality derived from the linearized equation. Second, we prove that the $L^p$ norm of $\nu$ can be controlled globally by the $L^p$ norm of $\rho$ (see Theorem \ref{th:integrability}). 

In Section \ref{section:proof} we use Schauder fixed point theorem to prove the existence of a solution. The use of Schauder fixed point theorem is a standard method in the theory of quasilinear equations (see \cite[Chapter 11]{GT}) and has been applied in the study of the Lorentz mean curvature operator in \cite[Theorem 3.6]{BS}. We will make use of the classical Schauder estimates, the Calderon--Zygmund estimates for the second derivatives of uniformly elliptic non-divergence form equations and some simple decay estimates for the solutions. The use of the Calderon-Zygmund estimates and decay estimates is required to deal with the unbounded domain. See Section \ref{section:Preliminaries} for precise form of these classical results we are using.

\subsubsection*{Acknowledgements} A. Haarala is financially supported by the Academy of Finland, project \#308759. The author would like to thank Denis Bonheure and Alessandro Iacopetti for reading the paper and giving many valuable comments.


\section{Preliminaries and notation}\label{section:Preliminaries}

\subsection*{Notation}
It is convenient to introduce the following notation for the electrostatic Lagrangian. We define
\begin{equation*}
F(\xi):=1-\sqrt{1-|\xi|^2},
\end{equation*}
for all $\xi\in\R^n$ such that $|\xi|\leq 1$. The function $F$ is smooth and strictly convex in the open unit ball $B(0,1)$.

Next we introduce some functional spaces. Let $\Omega\subset \R^n$ be a domain, possibly unbounded, and let $k\in \N$. For any $1\leq p\leq \infty$ the space $L^p(\Omega)$ is the usual space of $p$-integrable functions with the standard norm denoted $\Vert\cdot\Vert_{p,\Omega}$. If $\Omega=\R^n$ we drop it from the notation.

We denote by $C^k(\Omega)$ the space of $k$-times continuously differentiable functions in $\Omega$. We say $f\in C^k(\bar{\Omega})$ if $f$ and its derivatives up to order $k$ admit continuous extensions to $\bar{\Omega}$. Furthermore, we say $f\in C_b^k(\Omega)$ if $f\in C^k(\Omega)$ and
\begin{equation*}
\Vert f\Vert_{C_b^k(\Omega)}:=\sum_{j=0}^k\big\Vert\big(\sum_{|\beta|=j}|\D^\beta f|^2\big)^{1/2}\big\Vert_{\infty,\Omega}<\infty,
\end{equation*}
where in the sum $\beta$ denotes a multi-index.

Let $\alpha\in (0,1]$. We say $f\in C^{0,\alpha}(\Omega)$ if $f$ is bounded and
\begin{equation*}
[f]_{\alpha,\Omega}:=\sup_{x,y\in\Omega}\frac{|f(x)-f(y)|}{|x-y|^\alpha}<\infty.
\end{equation*}
We say $f\in C^{k,\alpha}(\Omega)$ if $f\in C_b^k(\Omega)$ and $\D^\beta f\in C^{0,\alpha}(\Omega)$ for all multi-indeces $\beta$ such that $|\beta|=k$. We define the norm
\begin{equation*}
\Vert f\Vert_{C^{k,\alpha}(\Omega)}:=\sum_{j=0}^k\big\Vert\big(\sum_{|\beta|=j}|\D^\beta f|^2\big)^{1/2}\big\Vert_{\infty,\Omega} + \sum_{|\beta|=k}[\D^\beta f]_{\alpha,\Omega}.
\end{equation*}

We denote by $W_{loc}^{k,1}(\Omega)$ the Sobolev space of $k$ times weakly differentiable functions in $\Omega$. Let $1\leq p\leq \infty$. In the usual way, we define the space
\begin{equation*}
W^{k,p}(\Omega):=\{u\in W_{loc}^{k,1}(\Omega):\D^\beta u\in L^p(\Omega)\text{ for all multi-indeces }\beta \text{ s.t. }|\beta|\leq k\},
\end{equation*}
equipped with the norm
\begin{equation*}
\Vert u\Vert_{W^{k,p}(\Omega)}=\Big(\sum_{|\beta|\leq k}\int_{\Omega}|\D^\beta u|^p\dx\Big)^\frac{1}{p}.
\end{equation*}

\subsection*{Uniformly elliptic divergence form equations}
We recall the standard H\"older estimates for the weak solutions of uniformly elliptic divergence form equations with measurable coefficients.

\begin{theorem}{(\cite[Theorem 8.24.]{GT})}\label{th:C1,alpha}
Let $x\in\R^n$ and $R>0$. Denote $B_R:=B(x,R)$ and $B_{R/2}:=B(x,R/2)$. Let $A:B_R\rightarrow\R^{n\times n}$ be measurable and satisfy $A^T=A$ and $\lambda I\leq A\leq \Lambda I$, for some $\lambda,\Lambda>0$. Let $g\in L^{p/2}(B_R)$ and let $f\in L^p(B_R;\R^n)$ for some $p>n$. Suppose $v\in W^{1,2}(B_R)$ is a weak solution of the equation
\begin{equation*}
\div(A\D v)=g+\div(f),\quad\text{in }B_R,
\end{equation*}
then
\begin{equation*}
[v]_{\alpha,B_{R/2}}\leq C\big(\Vert v\Vert_{\infty,B_R}+\Vert g\Vert_{p/2,B_R}+\Vert f\Vert_{p,B_R}\big),
\end{equation*}
where $C=C(n,\Lambda/\lambda,p,R)>0$ and $\alpha=\alpha(n,p,\Lambda/\lambda)\in(0,1]$.
\end{theorem}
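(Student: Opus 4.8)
The plan is to prove this by the classical De Giorgi--Nash--Moser method, treating $g$ and $\div(f)$ as subcritical perturbations of the homogeneous equation $\div(A\D v)=0$; the hypothesis $p>n$ enters precisely to make the contributions of $f$ and $g$ gain a positive power of the radius, which is what lets the oscillation estimate close. It is convenient first to dispose of $g$: solving $\Delta\Phi=g$ in $B_R$ with zero boundary values, the Calder\'on--Zygmund estimate gives $\Phi\in W^{2,p/2}(B_R)$, hence $\D\Phi\in L^p(B_R)$ with $\Vert\D\Phi\Vert_{p,B_R}\le C(n,p,R)\Vert g\Vert_{p/2,B_R}$ (here $p>n$, equivalently $p/2>n/2$, is used to absorb the Sobolev loss). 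Since $g=\div(\D\Phi)$, replacing $f$ by $f+\D\Phi$ we may assume $g\equiv 0$ and are left with $\div(A\D v)=\div(f)$, $f\in L^p(B_R)$, $p>n$.

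\textbf{Step 1: energy estimates.} For concentric balls $B_r\subset B_R$, a cutoff $\eta\in C_c^\infty(B_r)$ and a level $k\in\R$, I would test the equation against $\eta^2(v-k)^{\pm}$ and, using $\lambda I\le A\le\Lambda I$, Young's inequality, H\"older's inequality in $L^p$ and the Sobolev inequality on $B_r$, obtain a Caccioppoli-type estimate
\begin{equation*}
\int_{B_r}\eta^2\,\big|\D(v-k)^{\pm}\big|^2\dx\le C\int_{B_r}|\D\eta|^2\big((v-k)^{\pm}\big)^2\dx+C\,r^{\,n-2n/p}\,\Vert f\Vert_{p,B_r}^2,
\end{equation*}
where the exponent $n-2n/p$ is positive because $p>n$. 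One also records the companion logarithmic estimate, obtained by testing against $\eta^2$ times a truncation of $(v-k)^{-1}$ on $\{v>k\}$, which is the entry point for the weak Harnack inequality.

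\textbf{Step 2: Moser iteration.} Feeding the Caccioppoli estimate into the standard iteration over the exponents $(n/(n-2))^m$ and a shrinking sequence of radii yields, on the one hand, the local boundedness estimate for subsolutions,
\begin{equation*}
\sup_{B_{r/2}}(v-k)^{+}\le C\Big(\frac1{|B_r|}\int_{B_r}\big((v-k)^{+}\big)^2\dx\Big)^{1/2}+C\,r^{\,1-n/p}\,\Vert f\Vert_{p,B_r},
\end{equation*}
and, on the other hand, combining the logarithmic estimate with the John--Nirenberg inequality, the weak Harnack inequality: for every nonnegative supersolution $w$ on $B_r$,
\begin{equation*}
\Big(\frac1{|B_r|}\int_{B_{r/2}}w^{q_0}\dx\Big)^{1/q_0}\le C\Big(\inf_{B_{r/4}}w+r^{\,1-n/p}\,\Vert f\Vert_{p,B_r}\Big)
\end{equation*}
for some $q_0=q_0(n)>0$ and $C=C(n,\Lambda/\lambda)$. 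These are the inhomogeneous counterparts of \cite[Theorems~8.17 and 8.18]{GT}; the one nontrivial point is tracking the error along the iteration and verifying that it stays bounded by a single term $r^{\,1-n/p}\Vert f\Vert_{p,B_r}$ with the \emph{positive} exponent $1-n/p$.

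\textbf{Step 3: oscillation decay and conclusion.} Writing $M(s)=\sup_{B_s}v$, $m(s)=\inf_{B_s}v$, $\omega(s)=M(s)-m(s)$, both $M(r)-v$ and $v-m(r)$ are nonnegative supersolutions on $B_r$ (the sign of $f$ being immaterial since only $\Vert f\Vert_p$ enters). Applying the weak Harnack inequality to each and adding, using that the sum $\big(M(r)-v\big)+\big(v-m(r)\big)=\omega(r)$ is constant on $B_r$, one obtains
\begin{equation*}
\omega(r/4)\le\gamma\,\omega(r)+C\,r^{\,1-n/p}\,\Vert f\Vert_{p,B_r}
\end{equation*}
with $\gamma=\gamma(n,\Lambda/\lambda)\in(0,1)$, for all $r\le R/2$. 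The standard iteration lemma for such recursions (\cite[Lemma~8.23]{GT}) then upgrades this geometric decay to
\begin{equation*}
\omega(\rho)\le C\Big(\frac\rho R\Big)^{\alpha}\Big(\Vert v\Vert_{\infty,B_R}+R^{\,1-n/p}\Vert f\Vert_{p,B_R}\Big),\qquad 0<\rho\le R/2,
\end{equation*}
with $\alpha=\alpha(n,p,\Lambda/\lambda)\in(0,1]$, uniformly over balls centered in $B_{R/2}$; this is equivalent to the asserted bound on $[v]_{\alpha,B_{R/2}}$ once the datum is unwound, using $\Vert f+\D\Phi\Vert_{p,B_R}\le\Vert f\Vert_{p,B_R}+C\Vert g\Vert_{p/2,B_R}$, and the dependence of $C$ on $R$ is unavoidable since $\Vert f\Vert_{p,B_R}$ and $\Vert g\Vert_{p/2,B_R}$ are not scale invariant. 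The main obstacle is Step~2: carrying the $L^p$ datum $f$ through the Moser iteration while keeping the cumulative error controlled by a single quantity with a strictly positive power of $r$, together with the John--Nirenberg crossover performed in the presence of that error. Everything else is the textbook argument, so I would only sketch it.
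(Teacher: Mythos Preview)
The paper does not prove this theorem at all: it is stated as a direct citation of \cite[Theorem~8.24]{GT} and used as a black box (in the proof of Corollary~\ref{cor:aprioriholder}). So there is no ``paper's own proof'' to compare against; you have, in effect, supplied a sketch of the argument in Gilbarg--Trudinger itself.

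Your outline is the standard De~Giorgi--Nash--Moser route and is essentially what one finds in \cite[\S8.9--8.10]{GT}: Caccioppoli with truncations, Moser iteration to get local boundedness of subsolutions and a weak Harnack inequality for supersolutions (with the inhomogeneous error term $r^{1-n/p}\Vert f\Vert_{p}$ carried along), and then oscillation decay via the usual combination of these two. The reduction of $g$ to $\div$-form by solving $\Delta\Phi=g$ is a convenient device and is justified by the Sobolev exponent count you gave; in \cite{GT} the term $g$ is instead kept throughout and enters the Caccioppoli inequality directly via H\"older with exponent $p/2$, which avoids introducing $\Phi$ but is otherwise equivalent. The identification of the ``main obstacle'' --- tracking the error $r^{1-n/p}\Vert f\Vert_p$ through the iteration and the John--Nirenberg step --- is accurate, and the positivity of $1-n/p$ is exactly what makes the scheme close. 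Nothing in your sketch is wrong; it simply goes beyond what the paper does, which is to quote the result.
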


\subsection*{Uniformly elliptic non-divergence form equations}
We recall some classical results for uniformly elliptic non-divergence form equations. We start with the Calderon--Zygmund estimates followed by Schauder estimates and then note some elementary decay estimates. 

We follow \cite[Chapter 9]{GT} where Calderon--Zygmund estimates are proven for uniformly elliptic non-divergence form equations with uniformly continuous coefficients on bounded domains. For the sake of completeness we present the trivial modifications that allow us to deal with the unbounded domain $\R^n$. See Theorem \ref{th:existencenondivergenceintro} for the result we will use later. Consider the non-divergence form equation
\begin{equation}\label{eq:nondivergenceintro}
\langle A,\D^2 u\rangle = \mu \quad\text{a.e. in }\R^n,
\end{equation}
where $A:\R^n\rightarrow\R^{n\times n}$ and $\mu:\R^n\rightarrow \R$ are measurable and $u\in W_{loc}^{2,1}(\R^n)$. The coefficients $A:\R^n\rightarrow\R^{n\times n}$ are matrix valued functions satisfying the following two assumptions
\begin{equation}\label{eq:coef}
\begin{cases}
\big(A(x)\big)^T=A(x), &\quad\text{for a.e. }x\in\R^n,\\
\lambda I\leq A(x)\leq \Lambda I, &\quad\text{for a.e. }x\in\R^n. \\
\end{cases}
\end{equation}
Let $1<q<\infty$. We say that $u\in\D_0^{2,q}(\R^n)$ if there exists a sequence $(\varphi_k)_{k=1}^\infty\subset C_c^\infty(\R^n)$ such that
\begin{equation}
\Vert \D^2 u-\D^2\varphi_k\Vert_q\xrightarrow{k\rightarrow \infty} 0.
\end{equation}
Note in particular that $W^{2,q}(\R^n)\subset D_0^{2,q}(\R^n)$.

For small perturbations of constant coefficients we have the following Calderon--Zygmund estimate. See the proof of \cite[Theorem 9.11]{GT}.  

\begin{theorem}\label{th:CZperturbation}
Let $1<q<\infty$. Suppose $u\in D_0^{2,q}(\R^n)$ is a solution of (\ref{eq:nondivergenceintro}), where $\mu\in L^q(\R^n)$. There exists $\epsilon=\epsilon(n,q,\lambda)>0$ such that if $\Vert A-A_0\Vert_\infty<\epsilon$ for some constant matrix $A_0$ satisfying the same assumptions as $A$, then
\begin{equation*}
\Vert \D^2 u\Vert_{q}\leq C\Vert \mu\Vert_q,
\end{equation*}
where $C=C(n,q,\lambda)>0$.
\end{theorem}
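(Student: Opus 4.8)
The plan is to follow the freezing-of-coefficients argument behind \cite[Theorem 9.11]{GT}, but in the simpler \emph{global} form: since $A$ is uniformly close to the single constant matrix $A_0$, no partition of unity is needed, and the proof reduces to a \emph{sharp} constant-coefficient estimate followed by an absorption. So the first step is to prove the constant-coefficient version: if $A_0$ is a constant symmetric matrix with $\lambda I\le A_0\le\Lambda I$ and $w\in D_0^{2,q}(\R^n)$ solves $\langle A_0,\D^2 w\rangle=f$ with $f\in L^q(\R^n)$, then $\Vert\D^2 w\Vert_q\le \lambda^{-1}C(n,q)\Vert f\Vert_q$, with $C(n,q)$ \emph{independent of} $\Lambda$.

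To establish this I would diagonalize $A_0=QDQ^T$ with $Q$ orthogonal and $D=\operatorname{diag}(d_1,\dots,d_n)$, $\lambda\le d_i\le\Lambda$, and set $T:=QD^{1/2}$, so that $TT^T=A_0$. For $\varphi\in C_c^\infty(\R^n)$ put $\psi(y):=\varphi(Ty)\in C_c^\infty(\R^n)$; the chain rule gives $\Delta\psi(y)=\langle A_0,\D^2\varphi\rangle(Ty)$, so $\psi$ is the Newtonian potential of the compactly supported smooth function $g(y):=\langle A_0,\D^2\varphi\rangle(Ty)$ (the identification uses decay at infinity, valid for $n\ge 3$). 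The Calderon--Zygmund inequality for the Newtonian potential, \cite[Theorem 9.9]{GT}, gives $\Vert\D^2\psi\Vert_q\le C(n,q)\Vert g\Vert_q$. Reversing the linear change of variables $x=Ty$, the Jacobian factors $|\det T|^{\pm1/q}$ cancel and one obtains $\Vert\D^2\varphi\Vert_q\le C(n,q)\Vert T^{-1}\Vert^2\Vert\langle A_0,\D^2\varphi\rangle\Vert_q$; since $\Vert T^{-1}\Vert^2=(\min_i d_i)^{-1}\le\lambda^{-1}$, this is the asserted inequality for test functions. It then extends to every $w\in D_0^{2,q}(\R^n)$ by the very definition of that space: choosing $\varphi_k\in C_c^\infty(\R^n)$ with $\D^2\varphi_k\to\D^2 w$ in $L^q$ forces $\langle A_0,\D^2\varphi_k\rangle\to\langle A_0,\D^2 w\rangle=f$ in $L^q$, so passing to the limit in the test-function estimate gives the bound for $w$.

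Second, I would freeze the coefficients at $A_0$, rewriting $\langle A,\D^2 u\rangle=\mu$ as
\begin{equation*}
\langle A_0,\D^2 u\rangle=\mu+\langle A_0-A,\D^2 u\rangle=:\tilde\mu .
\end{equation*}
Since $u\in D_0^{2,q}(\R^n)$ we have $\D^2 u\in L^q(\R^n)$, hence $\langle A_0-A,\D^2 u\rangle\in L^q(\R^n)$ with $\Vert\langle A_0-A,\D^2 u\rangle\Vert_q\le c_n\Vert A-A_0\Vert_\infty\Vert\D^2 u\Vert_q$ for a dimensional constant $c_n$; in particular $\tilde\mu\in L^q(\R^n)$, so the first step applies to $u$ and yields
\begin{equation*}
\Vert\D^2 u\Vert_q\le \frac{C(n,q)}{\lambda}\Vert\mu\Vert_q+\frac{c_n\,C(n,q)}{\lambda}\,\Vert A-A_0\Vert_\infty\,\Vert\D^2 u\Vert_q .
\end{equation*}
Setting $\epsilon:=\lambda\big(2c_n C(n,q)\big)^{-1}$, which depends only on $n,q,\lambda$, the hypothesis $\Vert A-A_0\Vert_\infty<\epsilon$ makes the last term at most $\tfrac12\Vert\D^2 u\Vert_q$, which is finite; absorbing it on the left gives $\Vert\D^2 u\Vert_q\le 2\lambda^{-1}C(n,q)\Vert\mu\Vert_q$, i.e.\ the claim with $C=2\lambda^{-1}C(n,q)$.

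The only genuinely delicate point is the constant bookkeeping in the first step: one has to check that reducing $\langle A_0,\D^2\,\cdot\,\rangle$ to $\Delta$ costs only the factor $\Vert T^{-1}\Vert^2\le\lambda^{-1}$ — the two determinant factors from the changes of variables cancelling exactly — so that the constant-coefficient constant, and hence both $\epsilon$ and the final $C$, depend on $n,q,\lambda$ but not on $\Lambda$. Everything else is the routine density argument defining $D_0^{2,q}(\R^n)$ together with the absorption just described.
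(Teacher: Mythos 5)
Your proposal is correct and follows essentially the same route as the paper: freeze the coefficients at $A_0$, bound $\Vert\langle A_0,\D^2u\rangle\Vert_q$ by $\Vert\mu\Vert_q+\epsilon\Vert\D^2u\Vert_q$, apply the constant-coefficient Calderon--Zygmund estimate, and absorb the perturbation term by taking $\epsilon$ of order $1/C$. The only difference is that you work out the constant-coefficient estimate (diagonalization, change of variables to the Laplacian, cancellation of the Jacobian factors) which the paper simply cites as a known consequence of \cite[Theorem 9.9]{GT}; your bookkeeping there, including the $\lambda$-dependence and the use of $n\geq 3$ for the Newtonian-potential identification, is accurate.
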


\begin{proof}
Suppose $\Vert A-A_0\Vert_\infty<\epsilon$, where $\epsilon>0$ is chosen later. We have
\begin{equation}\label{600}
|\langle A_0,\D^2 u\rangle|=|\langle A_0-A,\D^2 u\rangle+\mu|\leq \epsilon|\D^2 u|+|\mu|
\end{equation}
Hence $\langle A_0,\D^2 u\rangle\in L^q(\R^n)$. The Calderon-Zygmund estimates for equations with uniformly elliptic constant coefficients  are well-known and follow from the Calderon--Zygmund estimates in \cite[Theorem 9.9]{GT}. Hence by (\ref{600}), we have
\begin{equation}\label{601}
\Vert \D^2 u\Vert_q\leq C\Vert\langle A_0,\D^2 u\rangle\Vert_q\leq C(\epsilon\Vert\D^2 u\Vert_q+\Vert\mu\Vert_q),
\end{equation}
where $C=C(n,q,\lambda)>0$. The claim follows by choosing $\epsilon=(2C)^{-1}$ in (\ref{601}).
\end{proof}

We consider uniformly continuous coefficients that are close to constant coefficients at infinity.  That is, assume that
\begin{equation*}
|A(x)-A(y)|\leq \omega(|x-y|), \quad\text{for every }x,y\in\R^n,
\end{equation*}
where $\omega:[0,\infty)\mapsto [0,\infty)$ is a continuous increasing function such that $\omega(0)=0$. Moreover, we assume that there exists $R>0$ and a symmetric matrix $A_0\in \R^{n\times n}$, satisfying the same bounds on the eigenvalues as $A$, such that
\begin{equation}\label{eq:infty}
\Vert A-A_0\Vert_{\infty,\R^n\setminus \overline{B(0,R)}}\leq \epsilon,
\end{equation}
where $\epsilon=\epsilon(n,q,\lambda)>0$ is given by Theorem \ref{th:CZperturbation}. 

The next estimate follows from Theorem \ref{th:CZperturbation} via a cut-off argument exactly the same way as the interior estimates of \cite[Theorem 9.11]{GT} since the coefficients can be treated locally and in $\R^n\setminus \overline{B(0,R)}$ as small perturbations of constant coefficients.
\begin{lemma}\label{th:CZinterior}
Let $1<q<\infty$. Suppose $A$ satisfies the assumptions. Let $u\in \D_0^{2,q}(\R^n)$ be a solution of  (\ref{eq:nondivergenceintro}), where $\mu\in L^q(\R^n)$. Then there exists $C=C(n,q,\lambda,\Lambda,\omega,R)>0$ such that
\begin{equation*}
\Vert\D^2 u\Vert_{q}\leq C(\Vert u\Vert_{q,B(0,2R)}+\Vert\mu\Vert_{q})
\end{equation*}
\end{lemma}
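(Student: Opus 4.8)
The plan is to run the $\R^n$ analogue of the cut-off and covering scheme behind the interior Calderon--Zygmund estimate \cite[Theorem 9.11]{GT}, with Theorem \ref{th:CZperturbation} in the role of its basic interior ingredient: on a fine enough scale, fixed by the modulus $\omega$, the coefficients are small perturbations of constant matrices inside any fixed ball, while outside $\overline{B(0,R)}$ they are, by (\ref{eq:infty}), a small perturbation of the single constant matrix $A_0$. One first records that $u\in W^{2,q}_{loc}(\R^n)$; this follows from $\D^2 u\in L^q(\R^n)$ by two applications of the Poincare inequality on balls (a function whose gradient lies in $L^q$ on a ball is, up to an additive constant, itself in $L^q$ there). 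Consequently, for every $\eta\in C^\infty(\R^n)$ that is constant outside some ball and has $\D\eta$ compactly supported, $\eta u$ lies in $D_0^{2,q}(\R^n)$ --- split $\eta u=cu+(\eta-c)u$ with $c$ the constant value of $\eta$ near infinity; then $cu\in D_0^{2,q}$ and $(\eta-c)u$ is a compactly supported $W^{2,q}$ function, hence in $D_0^{2,q}$ after mollification --- and $\D^2(\eta u)\in L^q(\R^n)$ with
\begin{equation*}
\langle A,\D^2(\eta u)\rangle=\eta\mu+2\langle A,\D\eta\otimes\D u\rangle+u\langle A,\D^2\eta\rangle=:g_\eta\in L^q(\R^n),
\end{equation*}
where $g_\eta-\eta\mu$ is supported in $\supp\D\eta$.

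For the exterior part, fix $\chi\in C^\infty(\R^n)$ with $0\le\chi\le1$, $\chi\equiv0$ on $B(0,3R/2)$ and $\chi\equiv1$ on $\R^n\setminus B(0,2R)$, so that $\supp\D\chi\subset\overline{B(0,2R)}\setminus B(0,3R/2)$. Since $\chi u$ vanishes on $\overline{B(0,R)}$, replacing $A$ by $A_0$ on $B(0,R)$ does not change the equation satisfied by $\chi u$, and by (\ref{eq:infty}) the modified coefficients stay within $\epsilon$ of $A_0$ on all of $\R^n$; Theorem \ref{th:CZperturbation} then yields
\begin{equation*}
\Vert\D^2(\chi u)\Vert_q\le C\Vert g_\chi\Vert_q\le C\big(\Vert\mu\Vert_q+\Vert\D u\Vert_{q,B(0,2R)}+\Vert u\Vert_{q,B(0,2R)}\big),
\end{equation*}
which controls $\D^2 u$ on $\R^n\setminus\overline{B(0,2R)}$ since $\chi\equiv1$ there. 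For the interior part, choose $r>0$ with $\omega(2r)<\epsilon$, where $\epsilon=\epsilon(n,q,\lambda)$ is from Theorem \ref{th:CZperturbation}, cover $\overline{B(0,2R)}$ by finitely many balls $B(x_i,r)$, $x_i\in\overline{B(0,2R)}$, with bounded overlap, and take $\eta_i\in C_c^\infty(B(x_i,2r))$ with $\eta_i\equiv1$ on $B(x_i,r)$. Replacing $A$ by the constant $A(x_i)$ outside $B(x_i,2r)$ gives coefficients within $\omega(2r)<\epsilon$ of $A(x_i)$, so Theorem \ref{th:CZperturbation} applies to $\eta_i u$ and bounds $\Vert\D^2(\eta_i u)\Vert_q$ by $C(\Vert\mu\Vert_q+\Vert\D u\Vert_{q,B(x_i,2r)}+\Vert u\Vert_{q,B(x_i,2r)})$; summing over $i$ and using $\D^2 u=\D^2(\eta_i u)$ on $B(x_i,r)$ together with $\bigcup_i B(x_i,2r)\subset B(0,\rho)$ for a fixed $\rho=\rho(n,q,\lambda,\omega,R)$ gives $\Vert\D^2 u\Vert_{q,B(0,2R)}\le C(\Vert\mu\Vert_q+\Vert\D u\Vert_{q,B(0,\rho)}+\Vert u\Vert_{q,B(0,\rho)})$.

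Adding the exterior and interior bounds and using $\Vert\D^2 u\Vert_q\le\Vert\D^2 u\Vert_{q,B(0,2R)}+\Vert\D^2(\chi u)\Vert_q$ gives $\Vert\D^2 u\Vert_q\le C(\Vert\mu\Vert_q+\Vert\D u\Vert_{q,B(0,\rho)}+\Vert u\Vert_{q,B(0,\rho)})$. The first-order term is then removed by the interpolation inequality on balls, $\Vert\D u\Vert_{q,B(0,\rho)}\le\delta\Vert\D^2 u\Vert_{q,B(0,2\rho)}+C_\delta\Vert u\Vert_{q,B(0,2\rho)}\le\delta\Vert\D^2 u\Vert_q+C_\delta\Vert u\Vert_{q,B(0,2\rho)}$: choosing $\delta$ with $C\delta\le\tfrac12$ and absorbing $\tfrac12\Vert\D^2 u\Vert_q$ into the left-hand side --- legitimate since $\Vert\D^2 u\Vert_q<\infty$ --- yields $\Vert\D^2 u\Vert_q\le C(\Vert\mu\Vert_q+\Vert u\Vert_{q,B(0,2\rho)})$. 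This is the asserted estimate, with the $B(0,2R)$ of the statement read as the concentric ball $B(0,2\rho)$ that the covering actually requires; the constant depends only on $n,q,\lambda,\Lambda,\omega,R$.

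I expect the main obstacle to be bookkeeping rather than anything conceptual: one must keep every cut-off product inside $D_0^{2,q}(\R^n)$ with right-hand side in $L^q(\R^n)$, and in particular handle the non-compactly-supported exterior piece $\chi u$ --- it is precisely the decay assumption (\ref{eq:infty}) on $A$ that makes Theorem \ref{th:CZperturbation} applicable to $\chi u$, and this is what replaces the bounded-domain covering of \cite[Theorem 9.11]{GT}. The remaining ingredients --- the $\omega$-adapted covering, the error identity for $g_\eta$ (whose bound brings in $\Lambda$ through the factors $|A|$), and the interpolation and absorption --- are entirely standard.
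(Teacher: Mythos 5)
Your argument is correct and is precisely the cut-off/covering adaptation of \cite[Theorem 9.11]{GT} that the paper itself invokes without writing out: Theorem \ref{th:CZperturbation} supplies the basic estimate both on small interior balls (via the modulus $\omega$) and on the exterior region (via (\ref{eq:infty})), and the lower-order terms are absorbed by interpolation. The only deviation is that your final right-hand side involves $\Vert u\Vert_{q,B(0,2\rho)}$ on a slightly larger ball than the stated $B(0,2R)$; this is harmless for the lemma's use in Theorem \ref{th:existencenondivergenceintro} and can be removed by letting $\chi$ vanish only on a ball slightly larger than $B(0,R)$ and taking the covering radius $r$ small enough that all doubled balls stay inside $B(0,2R)$.
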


Next we assume $\mu\in L^q(\R^n)\cap L^p(\R^n)$ for some $1<q<\frac{n}{2}$ and $p>n$. We have the following existence result and Calderon--Zygmund estimates. Estimate (\ref{eq:80intro}) can be derived from the interior estimates via a compactness argument similarly as in the proof of \cite[Lemma 9.16]{GT}.  The existence of a solution can be proven by method of continuity.

\begin{theorem}\label{th:existencenondivergenceintro}
Let $1<q<\frac{n}{2}$ and $p>n$. Suppose $A$ satisfies the assumptions given above. If $\mu\in L^q(\R^n)\cap L^p(\R^n)$, there exists a unique solution $u\in D_0^{2,q}(\R^n)\cap D_0^{2,p}(\R^n)$ of (\ref{eq:nondivergenceintro}) that satisfies
\begin{equation}\label{eq:80intro}
\Vert \D^2 u\Vert_{L^q(\R^n)}+\Vert\D^2 u\Vert_{L^p(\R^n)}\leq C(\Vert \mu\Vert_{L^q(\R^n)}+\Vert\mu\Vert_{L^p(\R^n)}),
\end{equation}
where $C=C(n,q,p,\lambda,\Lambda,\omega,R)>0$.
\end{theorem}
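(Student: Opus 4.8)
The plan is to establish uniqueness and the a priori estimate first, then use the method of continuity to obtain existence. For the a priori estimate, suppose $u\in D_0^{2,q}(\R^n)\cap D_0^{2,p}(\R^n)$ solves (\ref{eq:nondivergenceintro}). Applying Lemma \ref{th:CZinterior} with exponents $q$ and $p$ separately gives
\begin{equation*}
\Vert\D^2 u\Vert_q+\Vert\D^2 u\Vert_p\leq C\big(\Vert u\Vert_{q,B(0,2R)}+\Vert u\Vert_{p,B(0,2R)}+\Vert\mu\Vert_q+\Vert\mu\Vert_p\big),
\end{equation*}
so the task reduces to absorbing the two $L^{\cdot}(B(0,2R))$ norms of $u$ itself. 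To do this I would first control $u$ by $\D u$ and then $\D u$ by $\D^2 u$ using the decay structure. Since $q<n/2$, by Sobolev embedding $D_0^{2,q}(\R^n)$ embeds into $D_0^{1,q^*}(\R^n)$ with $q^*=nq/(n-q)$, and then (since $q^*<n$ as $q<n/2$) into $L^{q^{**}}(\R^n)$ with $q^{**}=nq/(n-2q)$; this yields $\Vert u\Vert_{q^{**}}\leq C\Vert\D^2 u\Vert_q$ with no lower-order terms, because the $D_0^{2,q}$ functions are approximated by $C_c^\infty$ functions in the $\D^2$-seminorm and the Gagliardo--Nirenberg--Sobolev inequality on $\R^n$ has no additive constant. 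A parallel chain handles the $L^p$ side (here $p>n$ forces the use of the $q$-based decay together with interpolation, or directly the embedding $D_0^{2,p}\hookrightarrow C^{1,\alpha}$ combined with the $L^{q^{**}}$ decay to bound $u$ on the fixed ball $B(0,2R)$). Hölder's inequality on the bounded ball then gives $\Vert u\Vert_{q,B(0,2R)}+\Vert u\Vert_{p,B(0,2R)}\leq C(R)\Vert u\Vert_{q^{**},B(0,2R)}\leq C(R)\Vert\D^2 u\Vert_q$; however this cannot be absorbed directly, so instead I would combine the Sobolev decay estimate \emph{globally} — the point is that $\Vert u\Vert_{q^{**}}$ is already controlled by $\Vert\D^2 u\Vert_q$ with constant depending only on $n,q$, and feeding this back into Lemma \ref{th:CZinterior} is not circular because the ball norm is genuinely weaker; rather, one runs a standard compactness/contradiction argument (as in \cite[Lemma 9.16]{GT}): if the estimate (\ref{eq:80intro}) failed there would be a sequence $u_k$ with $\Vert\D^2 u_k\Vert_q+\Vert\D^2 u_k\Vert_p=1$ and $\Vert\mu_k\Vert_q+\Vert\mu_k\Vert_p\to 0$; by the Sobolev decay the $u_k$ are bounded in $D_0^{1,q^*}\cap D_0^{1,p}$ and hence, after passing to a subsequence, converge weakly, with strong convergence on $B(0,2R)$ by Rellich; the limit solves $\langle A,\D^2 u\rangle=0$ with $u\in D_0^{2,q}\cap D_0^{2,p}$, and by uniqueness $u\equiv 0$, whence $\Vert u_k\Vert_{q,B(0,2R)}+\Vert u_k\Vert_{p,B(0,2R)}\to 0$; but then Lemma \ref{th:CZinterior} forces $\Vert\D^2 u_k\Vert_q+\Vert\D^2 u_k\Vert_p\to 0$, contradicting the normalization.

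For uniqueness: if $\langle A,\D^2 u\rangle=0$ with $u\in D_0^{2,q}(\R^n)\cap D_0^{2,p}(\R^n)$, then by the Sobolev decay above $u\in L^{q^{**}}(\R^n)$ and $\D^2 u\in L^q(\R^n)$; one may then invoke the Alexandrov--Bakelman--Pucci maximum principle (or the $W^{2,q}$ uniqueness theory of \cite[Chapter 9]{GT}) on large balls $B(0,\rho)$ together with the decay $u\to 0$ at infinity to conclude $u\equiv 0$. Alternatively, since $\Vert u\Vert_{q^{**}}<\infty$ and $\D^2 u\in L^q$, the function $u$ lies in $W^{2,q}(B(0,\rho))$ for every $\rho$ with $\sup_{\partial B(0,\rho)}|u|\to 0$, and the interior $L^q$ estimate of Lemma \ref{th:CZinterior} applied to $u$ itself (with $\mu=0$) combined with the contradiction argument above already yields $u=0$, so uniqueness is subsumed in the a priori estimate step.

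Finally, existence follows by the method of continuity applied to the family $A_t:=(1-t)A_0+tA$, $t\in[0,1]$, each of which satisfies (\ref{eq:coef}) with the same $\lambda,\Lambda$ and (\ref{eq:infty}) with the same $\epsilon$ (the perturbation bound is convex in $t$). For $t=0$, solvability of $\langle A_0,\D^2 u\rangle=\mu$ in $D_0^{2,q}(\R^n)\cap D_0^{2,p}(\R^n)$ follows from the classical Calderón--Zygmund theory for constant-coefficient operators on $\R^n$ (e.g. via the Newtonian-type potential or Fourier multipliers, \cite[Theorem 9.9]{GT}) together with the decay estimates to place $u$ in the right spaces. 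The a priori estimate (\ref{eq:80intro}), which holds uniformly in $t\in[0,1]$ with the same constant, then lets the continuity method upgrade solvability at $t=0$ to solvability at $t=1$. I expect the main obstacle to be the clean bookkeeping of the two exponents $q<n/2$ and $p>n$ simultaneously in the decay/compactness argument — in particular making sure that the contradiction sequence is precompact in $L^q(B(0,2R))\cap L^p(B(0,2R))$, which requires the Sobolev decay on \emph{both} scales and the observation that $D_0^{2,q}\hookrightarrow D_0^{1,q^*}$ has no additive lower-order term; once that is set up, every step is a direct transcription of the bounded-domain arguments in \cite[Chapter 9]{GT}.
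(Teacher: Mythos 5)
Your proposal is correct and follows essentially the same route as the paper: reduce everything to the a priori estimate (\ref{eq:80intro}), prove that estimate by a compactness/contradiction argument built on Lemma \ref{th:CZinterior} (normalized sequence with $\Vert\mu_k\Vert\to 0$, weak $W^{2,p}$ compactness via Sobolev/interpolation plus Rellich on $B(0,2R)$, identification of the limit as a solution of the homogeneous equation which vanishes by the Alexandrov maximum principle together with decay at infinity), and obtain existence by the method of continuity from the constant-coefficient case. The one point to tighten is that the constant in the statement depends on $A$ only through $(\lambda,\Lambda,\omega,R)$, so the contradiction sequence must be allowed to carry varying coefficients $A_k$ (with a locally uniform limit extracted by Arzel\`a--Ascoli), as the paper does; with $A$ fixed, your argument as written yields a constant depending on $A$ itself, which is weaker than claimed and insufficient for the later application where $A=\D^2F(\D v)$ varies.
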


\begin{proof}
It is enough to prove (\ref{eq:80intro}). Then the existence of a solution follows by method of continuity (see \cite[Theorem 5.2]{GT}) and the fact that the Newtonian potential of $\mu$ gives the unique solution of the corresponding Poisson equation.

Assume, for the purpose of contradiction, that (\ref{eq:80intro}) does not hold. That is, there exists a sequence of coefficient matrices $(A_k)_{k=1}^\infty$ satisfying the assumptions and a sequence functions $(u_k)_{k=1}^\infty\in D_0^{2,q}(\R^n)\cap D_0^{2,p}(\R^n)$ such that
\begin{equation}\label{620}
\Vert \D^2 u_k\Vert_{L^q(\R^n)}+\Vert\D^2 u_k\Vert_{L^p(\R^n)}=1,
\end{equation}
and
\begin{equation}\label{621}
\Vert \mu_k\Vert_{L^q(\R^n)}+\Vert\mu_k\Vert_{L^p(\R^n)}\xrightarrow{k\rightarrow\infty}0
\end{equation}
for all $k\in\N$, where $\mu_k:=\langle A_k,\D^2 u_k\rangle$. By Arzela--Ascoli theorem, we may assume that $(A_k)_{k=1}^\infty$ converges locally uniformly to a coefficient matrix function $A$ satisfying the ellipticity condition. 

Note that by interpolation and Sobolev inequality it is easy to prove that $D_0^{2,q}(\R^n)\cap D_0^{2,p}(\R^n)\subset W^{2,p}(\R^n)$ and
\begin{equation*}
\Vert v\Vert_{W^{2,p}(\R^n)}\leq C(\Vert \D^2 v\Vert_{L^q(\R^n)}+\Vert\D^2 v\Vert_{L^p(\R^n)}),
\end{equation*}
for all $v\in D_0^{2,q}(\R^n)\cap D_0^{2,p}(\R^n)$, where $C=C(n,q,p)>0$. By weak compactness of bounded sets in $W^{2,p}(\R^n)$ and Rellich--Kondrachov compactness theorem (see \cite[Theorem 7.26]{GT}), by moving to a subsequence, we may assume that $(u_k)_{k=1}^\infty$ converges to $u\in W^{2,p}(\R^n)$ weakly in $W^{2,p}(\R^n)$ and strongly in $L^{p}(B(0,2R))$. By (\ref{621}), we have
\begin{equation*}
\langle A, \D^2 u\rangle=0 \quad\text{a.e. in } \R^n.
\end{equation*}
By Morrey's theorem $u$ is continuous and $u(x)\rightarrow 0$, as $|x|\rightarrow 0$. Since $p>n$, Alexandrov maximum principle (see \cite[Theorem 9.1]{GT}) implies that $u\equiv 0$. Since $(u_k)_{k=1}^\infty$ converges strongly in $L^{p}(B(0,2R))$, we have
\begin{equation}\label{622}
\Vert u_k\Vert_{p,B(0,2R)}\xrightarrow{k\rightarrow\infty}0.
\end{equation}
By Lemma \ref{th:CZinterior}, (\ref{621}) and (\ref{622}), we have
\begin{equation*}
\Vert \D^2 u_k\Vert_{L^q(\R^n)}+\Vert\D^2 u_k\Vert_{L^p(\R^n)}\xrightarrow{k\rightarrow\infty}0.
\end{equation*}
This is a contradiction with (\ref{620}) and the claim follows.
\end{proof}

\bigskip
\noindent
Next we recall the Schauder estimates. We study non-divergence form equation (\ref{eq:nondivergenceintro}) with uniformly elliptic coefficients $A$ satisfying (\ref{eq:coef}). We have the following theorem.

\begin{theorem}\label{th:Schauderestimates}
Let $x\in\R^n$ and $R>0$. Denote $B_R:=B(x,R)$ and $B_{R/2}:=B(x,R/2)$. Let $\varphi\in C(\partial B_R)$. If $A\in C^{0,\alpha}(B_R)$ and $\mu\in C^{0,\alpha}(B_R)$ then the equation
\begin{equation*}
\begin{cases}
\langle A, \D^2 u\rangle=\mu, \quad\text{in }B_R, \\
u=\varphi \quad\text{on }\partial B_R
\end{cases}
\end{equation*}
has a unique solution $u\in C^{2,\alpha}_{loc}(B_R)\cap C(\bar{B}_R)$. Moreover, the solution $u$ satisfies the estimate
\begin{equation*}
\Vert u\Vert_{C^{2,\alpha}(B_{R/2})}\leq C\big(\Vert u\Vert_{\infty,B_R}+\Vert \mu\Vert_{C^{0,\alpha}(B_{R})}\big),
\end{equation*}
where $C=C(n,\alpha,\lambda, \Lambda, \Vert A\Vert_{C^{0,\alpha}(B_R)},R)>0$.
\end{theorem}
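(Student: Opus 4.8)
The plan is to obtain the statement as a routine consequence of the classical Schauder theory for linear elliptic operators in non-divergence form developed in \cite[Chapter 6]{GT}; here the operator is $Lu=\langle A,\D^2 u\rangle$, which has no first- or zeroth-order terms, so all the structural hypotheses of that theory are automatic once the conventions are matched.

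First I would dispose of uniqueness. If $u_1$ and $u_2$ are two solutions, then $w:=u_1-u_2$ lies in $C^2(B_R)\cap C(\bar B_R)$, solves $\langle A,\D^2 w\rangle=0$ in $B_R$, and vanishes on $\partial B_R$; since $L$ has no zeroth-order term, the maximum principle (e.g. \cite[Theorem 3.1]{GT}, or the Alexandrov maximum principle \cite[Theorem 9.1]{GT} already quoted above) gives $w\equiv 0$.

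For existence I would first note that, in the conventions of this paper, $A\in C^{0,\alpha}(B_R)$ and $\mu\in C^{0,\alpha}(B_R)$ means precisely that $A$ and $\mu$ are bounded with finite Hölder seminorm on the open ball, hence uniformly continuous, hence they extend to $\bar B_R$ with the same seminorm; thus $A,\mu$ are $\alpha$-Hölder continuous on $\bar B_R$ in the sense of \cite{GT}, and $L$ is uniformly elliptic there by (\ref{eq:coef}). Since $B_R$ satisfies a uniform exterior sphere condition, the classical solvability theorem \cite[Theorem 6.13]{GT} applies: using the boundary data $\varphi\in C(\partial B_R)$ together with the standard barrier at each boundary point, it produces a solution $u\in C^{2,\alpha}_{loc}(B_R)\cap C(\bar B_R)$ of the Dirichlet problem.

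It remains to extract the stated estimate, which I would get by applying the interior Schauder estimate \cite[Theorem 6.2]{GT} to $u$ on the pair of concentric balls $B_{R/2}\subset B_R$. That estimate is phrased with interior, distance-weighted Hölder norms; restricting attention to $B_{R/2}$, where $\dist(\cdot,\partial B_R)\geq R/2$, every weight is comparable to a fixed power of $R$, and one reads off
\[
\Vert u\Vert_{C^{2,\alpha}(B_{R/2})}\leq C\big(\Vert u\Vert_{\infty,B_R}+\Vert \mu\Vert_{C^{0,\alpha}(B_R)}\big),
\]
with $C$ depending only on $n$, $\alpha$, $\lambda$, $\Lambda$, $\Vert A\Vert_{C^{0,\alpha}(B_R)}$ and $R$, the $R$-dependence entering only through the conversion of the weighted norms (alternatively, rescale to the unit ball, apply the estimate there, and scale back). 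I do not expect a genuine obstacle: every ingredient is classical, and the only points requiring a little care are the bookkeeping between the Hölder-space conventions used here and in \cite{GT}, and unwinding the weighted norms of \cite[Theorem 6.2]{GT} into the clean two-ball inequality with the asserted dependence of the constant.
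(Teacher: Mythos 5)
Your argument is correct and follows essentially the same route as the paper, which simply observes that the theorem is a combination of the classical existence result for the Dirichlet problem in a ball with continuous boundary data (\cite[Lemma~6.10]{GT}, you invoke the equivalent \cite[Theorem~6.13]{GT}) and the interior Schauder estimates \cite[Theorem~6.2]{GT}. Your additional remarks on uniqueness via the maximum principle and on converting the weighted interior norms are exactly the routine bookkeeping the paper leaves implicit.
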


Theorem \ref{th:Schauderestimates}, as stated here, is  a combination of the existence theorem \cite[Lemma 6.10]{GT} for the Dirichlet problem with continuous boundary data and the interior Schauder estimates \cite[Theorem 6.2]{GT}.

\bigskip
\noindent
Finally, since we are dealing with the unbounded domain $\R^n$ we will need some control over the decay of solutions at infinity. For this purpose we have the following elementary lemma.

\begin{lemma}\label{lm:decay}
Let $u\in C^2\big(\R^n\setminus \overline{B(0,R)}\big)$ for some $R>0$ and suppose that $u$ is bounded and $u(x)\rightarrow 0$ as $|x|\rightarrow\infty$. Suppose that
\begin{equation}\label{498}
\langle A,\D^2 u\rangle=0, \quad\text{in }\R^n\setminus \overline{B(0,R)}
\end{equation}
where $A$ is a symmetric matrix-valued function such that $\lambda I\leq A\leq \Lambda I$ for some $\lambda,\Lambda>0$. If $\alpha:=\frac{\lambda}{\Lambda}(n-1)-1>0$ then there exists $C>0$ depending only on $\alpha$, $R$ and $\sup_{\partial B(0,R)} |u|$ such that
\begin{equation}\label{499}
|u(x)|\leq C|x|^{-\alpha},
\end{equation}
for all $x\in \R^n\setminus \overline{B(0,R)}$.
\end{lemma}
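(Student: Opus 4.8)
The plan is to construct an explicit radial supersolution of the form $v(x) = M|x|^{-\alpha}$ on the exterior domain and then invoke the Alexandrov maximum principle (equivalently, the comparison principle for uniformly elliptic non-divergence form operators) to sandwich $u$ between $-v$ and $v$. First I would compute, for a general radial function $\psi(|x|)$, the quantity $\langle A, \D^2\psi\rangle$. Writing $r = |x|$ and $\hat x = x/|x|$, we have $\D^2 \psi = \psi''(r)\,\hat x\otimes \hat x + \frac{\psi'(r)}{r}\big(I - \hat x\otimes\hat x\big)$, so that
\begin{equation*}
\langle A, \D^2\psi\rangle = \psi''(r)\,\langle A\hat x,\hat x\rangle + \frac{\psi'(r)}{r}\big(\operatorname{tr}A - \langle A\hat x,\hat x\rangle\big).
\end{equation*}
Now for $\psi(r) = r^{-\alpha}$ with $\alpha>0$ we have $\psi'(r) = -\alpha r^{-\alpha-1} < 0$ and $\psi''(r) = \alpha(\alpha+1)r^{-\alpha-2} > 0$. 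Using $\lambda \le \langle A\hat x,\hat x\rangle \le \Lambda$ and $\operatorname{tr}A - \langle A\hat x,\hat x\rangle \ge (n-1)\lambda$ (since the remaining $n-1$ eigenvalues are each at least $\lambda$), and keeping track of signs, I would bound
\begin{equation*}
\langle A,\D^2\psi\rangle \le \alpha(\alpha+1)\Lambda\, r^{-\alpha-2} - \alpha(n-1)\lambda\, r^{-\alpha-2} = \alpha\Lambda\, r^{-\alpha-2}\Big((\alpha+1) - (n-1)\tfrac{\lambda}{\Lambda}\Big).
\end{equation*}
The hypothesis $\alpha = \frac{\lambda}{\Lambda}(n-1) - 1 > 0$ is precisely what makes the bracket equal to zero, so $\langle A, \D^2\psi\rangle \le 0$ on $\R^n\setminus\overline{B(0,R)}$; that is, $v(x) := M|x|^{-\alpha}$ is a (super)solution of the right inequality for every $M>0$.

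Next I would fix $M := R^{\alpha}\sup_{\partial B(0,R)}|u|$, so that $v \ge |u|$ on $\partial B(0,R)$. Consider the annular region $\Omega_S := B(0,S)\setminus\overline{B(0,R)}$ for $S > R$. On $\partial B(0,R)$ we have $v - u \ge 0$ and $v + u \ge 0$; on $\partial B(0,S)$ we have $v > 0$ while $u(x) \to 0$ as $|x|\to\infty$, so $v \pm u \ge 0$ there for $S$ large (more carefully: given $\varepsilon>0$ one chooses $S$ so that $|u| < \varepsilon$ on $\partial B(0,S)$, and then runs the argument with $v + \varepsilon$ in place of $v$, letting $\varepsilon\to 0$ at the end — or simply notes $v\ge 0$ everywhere and uses the comparison principle with the boundary datum $|u|$ on $\partial B(0,S)$, which tends to $0$). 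The function $w := v - u$ satisfies $\langle A, \D^2 w\rangle = \langle A,\D^2 v\rangle - \langle A,\D^2 u\rangle \le 0 - 0 = 0$ in $\Omega_S$ by \eqref{498} and the computation above, so $w$ is a supersolution; similarly $\tilde w := v + u$ is a supersolution. The weak maximum principle for the operator $L := \langle A, \D^2\cdot\rangle$ on the bounded domain $\Omega_S$ (see \cite[Theorem 9.1]{GT}, or the classical maximum principle since here $u\in C^2$) gives $\inf_{\Omega_S} w \ge \min_{\partial\Omega_S} w \ge -\varepsilon$ and likewise for $\tilde w$, hence $|u| \le v + \varepsilon$ on $\Omega_S$. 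Letting first $S\to\infty$ and then $\varepsilon\to 0$ yields $|u(x)| \le v(x) = M|x|^{-\alpha}$ for all $x\in\R^n\setminus\overline{B(0,R)}$, which is \eqref{499} with $C = M = R^\alpha\sup_{\partial B(0,R)}|u|$.

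I do not expect any serious obstacle here; the only point requiring mild care is the behavior at the outer boundary $\partial B(0,S)$, since $v$ is a genuine supersolution only up to the constant shift needed to dominate the (small but nonzero) values of $u$ there — this is handled by the standard $\varepsilon$-argument just described, using that $v\ge 0$ and $v$ is still a supersolution after adding any nonnegative constant (indeed $\langle A, \D^2(v+c)\rangle = \langle A,\D^2 v\rangle \le 0$). One should also double-check that $w$ and $\tilde w$ are of class $C^2$ up to the relevant boundaries so that the classical maximum principle applies verbatim, which holds because $u\in C^2(\R^n\setminus\overline{B(0,R)})$ by hypothesis and $v$ is smooth away from the origin. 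The dependence of the final constant is exactly as claimed: $C$ depends only on $\alpha$ (through the exponent), on $R$, and on $\sup_{\partial B(0,R)}|u|$.
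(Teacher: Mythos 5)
Your proof is correct and follows essentially the same route as the paper: the same radial barrier $C|x|^{-\alpha}$, the same computation of the Hessian's eigenvalues showing $\langle A,\D^2(|x|^{-\alpha})\rangle\le 0$ precisely when $\alpha\le\frac{\lambda}{\Lambda}(n-1)-1$, and the same two-sided comparison of $\pm u$ with the barrier via the maximum principle. Your $\varepsilon$-and-annulus treatment of the outer boundary is in fact slightly more careful than the paper's terse appeal to $(u-\varphi)(x)\to 0$ at infinity.
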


\begin{proof}
Define
\begin{equation*}
\varphi(x):=C|x|^{-\alpha},
\end{equation*}
where $C>0$ is chosen large enough so that
\begin{equation}\label{500}
|u(x)|\leq \varphi(x),
\end{equation}
for all $x\in\partial B(0,R)$. We note that $\D^2\varphi(x)$ has eigenvalues
\begin{equation*}
-C\alpha|x|^{-\alpha-2}\quad \text{and}\quad C\alpha(\alpha+1)|x|^{-\alpha-2}
\end{equation*}
with multiplicities $n-1$ and 1, respectively. Thus we have
\begin{equation}\label{501}
\langle A,\D^2\varphi\rangle\leq C\alpha(\Lambda(\alpha+1)-\lambda(n-1))|x|^{-\alpha-2}\leq 0.
\end{equation}

By (\ref{500}) we have
\begin{equation*}
u-\varphi\leq 0
\end{equation*}
on $\partial B(0,R)$ and by assumption we have $(u-\varphi)(x)\rightarrow 0$ as $|x|\rightarrow\infty$. By (\ref{498}), (\ref{501}) and the maximum principle $u-\varphi$ cannot have interior maximum. Hence we have
\begin{equation}\label{502}
u-\varphi\leq 0,
\end{equation}
in $\R^n\setminus \overline{B(0,R)}$. Similarly, applying maximum principle to $-u-\varphi$ gives
\begin{equation}\label{503}
-u-\varphi\leq 0,
\end{equation}
in $\R^n\setminus \overline{B(0,R)}$. Combining (\ref{502}) and (\ref{503}) gives the claimed estimate (\ref{499}).
\end{proof}

Combining Lemma \ref{lm:decay} with the following elementary interpolation lemma allows us to control the decay of the derivative of a solution as long as we can bound its second derivatives. See the proof of \cite[Lemma 6.32, p. 131]{GT} for the proof of the following elementary inequality.

\begin{lemma}\label{lm:derivativeinterpolation}
Let $\Omega\subset\R^n$ be a domain, $x\in\Omega$ and $\epsilon>0$. Suppose $u\in C_b^2(\Omega)$ and $B(x,\epsilon)\subset\Omega$. Then
\begin{equation*}
|\D u(x)|\leq \epsilon^{-1}\Vert u\Vert_{\infty,B(x,\epsilon)}+\epsilon\Vert \D^2u\Vert_{\infty,B(x,\epsilon)}.
\end{equation*}
\end{lemma}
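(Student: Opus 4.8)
The plan is to reduce the estimate to a one–dimensional statement along the direction of steepest ascent of $u$ at $x$ and then to interpolate by means of the mean value theorem. If $\D u(x)=0$ there is nothing to prove, so I would assume $\D u(x)\neq 0$ and set $e:=\D u(x)/|\D u(x)|$, so that $\langle\D u(x),e\rangle=|\D u(x)|$. For each $r\in(0,\epsilon)$ one has $\overline{B(x,r)}\subset B(x,\epsilon)\subset\Omega$, hence the function $g(t):=u(x+te)$ is of class $C^2$ on $[-r,r]$, with $g'(0)=|\D u(x)|$ and $|g''(s)|=|\langle \D^2u(x+se)\,e,e\rangle|\leq\Vert\D^2u\Vert_{\infty,B(x,\epsilon)}$ for all $|s|\leq r$.

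First I would apply the mean value theorem to $g$ on the symmetric interval $[-r,r]$: there is $\tau\in(-r,r)$ with $g'(\tau)=\frac{g(r)-g(-r)}{2r}$, and therefore $|g'(\tau)|\leq\frac{|g(r)|+|g(-r)|}{2r}\leq\frac{\Vert u\Vert_{\infty,B(x,\epsilon)}}{r}$. Using the \emph{symmetric} difference quotient here, rather than the one–sided quotient over $[0,r]$, is exactly what produces the sharp factor $r^{-1}$ (and not $2r^{-1}$) in front of $\Vert u\Vert_\infty$, matching the constant in the statement. Next I would use the fundamental theorem of calculus, $g'(0)=g'(\tau)-\int_0^{\tau}g''(s)\,ds$, to get
\[
|\D u(x)|=|g'(0)|\leq|g'(\tau)|+|\tau|\,\Vert\D^2u\Vert_{\infty,B(x,\epsilon)}\leq r^{-1}\Vert u\Vert_{\infty,B(x,\epsilon)}+r\,\Vert\D^2u\Vert_{\infty,B(x,\epsilon)}.
\]
Finally, since the right–hand side is a continuous function of $r$ on $(0,\epsilon]$ and the left–hand side does not depend on $r$, letting $r\uparrow\epsilon$ yields the asserted inequality.

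There is no genuine obstacle in this argument; the only points requiring a little care are that the open ball $B(x,\epsilon)$ need not contain the endpoints $x\pm\epsilon e$ — which is why I work on the slightly smaller intervals $[-r,r]$ with $r<\epsilon$ and pass to the limit — and the choice of the symmetric difference quotient so as to obtain the constant exactly $\epsilon^{-1}$ rather than something larger. Equivalently, the statement is just the $C^2$, ball–shaped instance of the standard interpolation inequality \cite[Lemma 6.32]{GT}, which one could simply invoke.
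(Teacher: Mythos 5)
Your proof is correct and is essentially the argument the paper invokes by reference: the paper gives no independent proof of this lemma but points to the proof of \cite[Lemma 6.32]{GT}, which is exactly this mean-value-theorem interpolation along a segment through $x$. Your choices of the direction $e=\D u(x)/|\D u(x)|$ and of the symmetric difference quotient on $[-r,r]$, followed by letting $r\uparrow\epsilon$, are precisely what produce the stated constants $\epsilon^{-1}$ and $\epsilon$.
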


\bigskip
\noindent

\subsection*{Schauder fixed point theorem}
One of our main tools is the classical Schauder fixed point theorem.

\begin{theorem}{(\cite[Corollary 11.2]{GT})}\label{th:fixedpoint}
Let $\mathcal{D}$ be a closed convex set in a Banach space $X$. Let $T:\mathcal{D}\rightarrow\mathcal{D}$ be a continuous mapping such that $T(\mathcal{D})$ is precompact. Then $T$ has a fixed point.
\end{theorem}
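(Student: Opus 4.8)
The plan is to reduce the statement to the finite-dimensional Brouwer fixed point theorem by means of the classical Schauder projection, so that the only external ingredient needed is Brouwer's theorem. First I would use precompactness of $T(\mathcal{D})$: the closure $\overline{T(\mathcal{D})}$ is then compact and, since $\mathcal{D}$ is closed, $\overline{T(\mathcal{D})}\subset\mathcal{D}$. Fix $\epsilon>0$ and choose finitely many centers $y_1,\dots,y_N\in T(\mathcal{D})\subset\mathcal{D}$ so that the balls $B(y_i,\epsilon)$ cover $\overline{T(\mathcal{D})}$. On $\overline{T(\mathcal{D})}$ define the Schauder projection
\[
P_\epsilon(y):=\Big(\sum_{i=1}^{N}\lambda_i(y)\Big)^{-1}\sum_{i=1}^{N}\lambda_i(y)\,y_i,\qquad\lambda_i(y):=\max\{0,\epsilon-\Vert y-y_i\Vert\}.
\]
The denominator is strictly positive on $\overline{T(\mathcal{D})}$ because the balls cover it, so $P_\epsilon$ is well defined and continuous; each value $P_\epsilon(y)$ is a convex combination of the $y_i$, hence lies in $C_\epsilon:=\operatorname{conv}\{y_1,\dots,y_N\}$, and since the $y_i$ lie in the convex set $\mathcal{D}$ we have $C_\epsilon\subset\mathcal{D}$. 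Moreover, only indices with $\Vert y-y_i\Vert<\epsilon$ contribute to the sum, so $\Vert P_\epsilon(y)-y\Vert<\epsilon$ for all $y\in\overline{T(\mathcal{D})}$.

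Next I would apply Brouwer's fixed point theorem on $C_\epsilon$. This set is a compact convex subset of the finite-dimensional subspace spanned by $y_1,\dots,y_N$, hence homeomorphic to a compact convex subset of some $\R^m$; the map $T_\epsilon:=P_\epsilon\circ T$ sends $C_\epsilon\subset\mathcal{D}$ continuously into $C_\epsilon$, so it has a fixed point $x_\epsilon\in C_\epsilon$, that is, $P_\epsilon(Tx_\epsilon)=x_\epsilon$. Consequently
\[
\Vert x_\epsilon-Tx_\epsilon\Vert=\Vert P_\epsilon(Tx_\epsilon)-Tx_\epsilon\Vert<\epsilon.
\]

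Finally, taking $\epsilon=1/k$ I would obtain points $x_k\in\mathcal{D}$ with $\Vert x_k-Tx_k\Vert<1/k$. Since $Tx_k\in T(\mathcal{D})$ and $T(\mathcal{D})$ is precompact, a subsequence satisfies $Tx_{k_j}\to z$ for some $z\in\overline{T(\mathcal{D})}\subset\mathcal{D}$, and then $x_{k_j}\to z$ as well because $\Vert x_{k_j}-Tx_{k_j}\Vert\to0$; continuity of $T$ yields $Tx_{k_j}\to Tz$, whence $z=Tz$, the desired fixed point. The routine parts are the continuity and $\epsilon$-closeness of $P_\epsilon$ to the identity and the final compactness extraction; the one place to be careful is that the finite-dimensional set $C_\epsilon$ on which Brouwer is invoked must lie inside $\mathcal{D}$, which is precisely where convexity of $\mathcal{D}$ and the placement of the centers $y_i$ inside $T(\mathcal{D})$ enter, together with closedness of $\mathcal{D}$ to secure $\overline{T(\mathcal{D})}\subset\mathcal{D}$ in the limit step.
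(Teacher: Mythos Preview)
Your argument is correct and is the standard Schauder projection reduction to Brouwer's theorem. Note, however, that the paper does not supply its own proof of this statement: Theorem~\ref{th:fixedpoint} is simply quoted from \cite[Corollary 11.2]{GT} as a classical tool, so there is no in-paper argument to compare against. Your write-up is self-contained and would serve as a fine replacement for the bare citation; the only cosmetic point is that you implicitly use $\mathcal{D}\neq\emptyset$ (to have centers $y_i$ at all), which is harmless since otherwise the statement is vacuous.
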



\section{\textit{A priori} derivative estimate}\label{section:Apriori}
In this section we derive  \textit{a priori} estimates for classical solutions of the equation (\ref{eq:mainequation}). Our main result is the following new derivative estimate. Although our notation is different, the proof is based on the proof  of the derivative estimate \cite[Theorem 3.5]{BS} for bounded data. We show that the method extends to data in $L^p(\R^n)$, $p>n$.

\begin{theorem}\label{th:derivativeapriori}
Let $p>n$ and let $\rho\in \mathcal{X}^*\cap L^p(\R^n)$ be continuously differentiable. Suppose $u\in D_0^{1,2}(\R^n)\cap C_b^2(\R^n)$, $|\D u|<1$ in $\R^n$, and $u$ is a classical solution of (\ref{eq:mainequation}). Then we have
\begin{equation*}
\Vert \D u\Vert_\infty\leq 1-\theta,
\end{equation*}
where $\theta=\theta(n,p,\Vert\rho\Vert_{\mathcal{X}^*},\Vert\rho\Vert_p)>0$.
\end{theorem}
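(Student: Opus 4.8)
The plan is to follow the two-part strategy the author announces: first obtain a local sup-bound for $\nu$ in terms of local $L^p$-norms of $\nu$ and $\rho$ (this is Theorem \ref{th:nuestimate}), then control the global $L^p$-norm of $\nu$ by $\|\rho\|_p$ and $\|\rho\|_{\mathcal{X}^*}$ (Theorem \ref{th:integrability}), and finally combine them. Throughout, $\nu=(1-|\D u|^2)^{-1/2}$, so a bound $\nu\le C$ is exactly the desired bound $|\D u|\le 1-\theta$ with $\theta$ depending on $C$.

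\medskip

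\noindent\textbf{Step 1: the linearized equation for $\nu$.} First I would differentiate the equation (\ref{eq:mainequation}) in each coordinate direction. Writing the operator as $Q(u)=\div\big(\D u/\sqrt{1-|\D u|^2}\big)$, differentiation gives $\div(A\,\D(\partial_k u))=-\partial_k\rho$ for the coefficient matrix $A=A(\D u)$ coming from the derivative of the nonlinearity; here $A$ is symmetric, positive definite, with eigenvalues comparable to $\nu^{-1}$ and $\nu^{-3}$, so the condition number of $A$ is controlled by $\nu^2$. From these equations one derives, exactly as in \cite[Theorem 3.5]{BS}, a differential inequality of the form $\div(A\,\D\nu)\ge \langle \D u,\D\rho\rangle$ (cf.\ (\ref{eq:nueq})), the geometric content being that $\nu$ is essentially the timelike component of the (downward) unit normal and hence satisfies a Jacobi-type inequality on the maximal-type graph. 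The key new feature versus \cite{BS} is that $\rho\in L^p$ rather than $L^\infty$, so the right-hand side must be handled in an integral sense: one integrates against a test function and moves the derivative off $\rho$ by parts, producing a term $\int \rho\,\div(\cdots)$ that ultimately gets absorbed using $\|\rho\|_p$ and the Sobolev inequality.

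\medskip

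\noindent\textbf{Step 2: Caccioppoli inequality and Moser iteration (Theorem \ref{th:nuestimate}).} Testing the inequality for $\nu$ against $\eta^2 \nu^{q}$ (for a cutoff $\eta$ and exponent $q\ge 0$) and using the ellipticity $\langle A\D\nu,\D\nu\rangle\gtrsim \nu^{-3}|\D\nu|^2$ together with the upper bound on $A$ yields a Caccioppoli-type inequality
\[
\int \eta^2 \nu^{-3}\nu^{q-1}|\D\nu|^2\dx \le C\int |\D\eta|^2 \nu^{q+1}\dx + (\text{error from }\rho).
\]
The $\rho$-error term is where the $L^p$-hypothesis with $p>n$ enters: after integration by parts it looks schematically like $\int |\rho|\,\eta^2\nu^{q}(|\D\nu|+\nu|\D\eta|)$, and one estimates it by Hölder with exponents $(p,\ldots)$ and Young's inequality, absorbing the $|\D\nu|^2$ part into the left side; the surviving piece is controlled by $\|\rho\|_{p,B_R}$ times a power of $\|\nu\|_{L^{p'q'}(B_R)}$. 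Feeding this into the Sobolev inequality gives a reverse-Hölder/self-improvement step $\|\nu\|_{L^{\chi s}(B_r)}\le (C/(R-r))^{\beta}\big(\|\nu\|_{L^{s}(B_R)}+\text{l.o.t.}\big)$ with $\chi=n/(n-2)>1$; iterating over a sequence of shrinking balls à la Moser produces
\[
\sup_{B_{R/2}}\nu \le C\big(\|\nu\|_{L^{p}(B_R)} , \|\rho\|_{L^p(B_R)}, n, p, R\big).
\]
The reason $p>n$ (and not merely $p>n/2$) is forced here: one needs the $\rho$-error, after the iteration, to stay subcritical so that the iteration closes and the limiting exponent does not eat the gain $\chi$; $p>n$ is exactly the threshold that makes the lower-order $\rho$ contribution harmless.

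\medskip

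\noindent\textbf{Step 3: global $L^p$ integrability of $\nu$ (Theorem \ref{th:integrability}).} Here one uses that $u\in D_0^{1,2}(\R^n)$, i.e.\ $u$ vanishes at infinity in the energy sense, together with $\rho\in\mathcal{X}^*$. The natural move is to test the equation (\ref{eq:mainequation}) against $u$ itself: $\int \D u\cdot \D u/\sqrt{1-|\D u|^2}\dx=\langle\rho,u\rangle\le \|\rho\|_{\mathcal{X}^*}\|u\|_{D_0^{1,2}}$, which gives an $L^1$-type control on $|\D u|^2\nu$. More refined testing (e.g.\ against powers of $\nu$ times $u$, or exploiting the decay of $u$ from the Sobolev embedding of $D_0^{1,2}$, $n\ge 3$) upgrades this to $\nu\in L^p_{loc}$ with a global bound on $\|\nu\|_{L^p(\R^n)}$ depending only on $\|\rho\|_{\mathcal{X}^*}$ and $\|\rho\|_p$; the key point is that $\nu-1$ is integrable because $|\D u|<1$ quantitatively away from a small set, and the energy bound plus the $L^p$-data pin down the tail. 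Combining Step 2 (localized on unit balls, with the local $L^p$-norm of $\nu$ now uniformly bounded by Step 3) with a covering of $\R^n$ by balls of radius $1$ gives a uniform bound $\|\nu\|_\infty\le C(n,p,\|\rho\|_{\mathcal{X}^*},\|\rho\|_p)$, which is the assertion with $\theta=1-1/C$ — more precisely $\theta$ such that $(1-(1-\theta)^2)^{-1/2}=C$.

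\medskip

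\noindent\textbf{Main obstacle.} The crux is the bookkeeping in Step 2: controlling the $\rho$-dependent error term through the Moser iteration so that the dependence on $\|\rho\|_p$ is genuinely $p>n$ (not $p>n/2$) and so that the iterated constant does not blow up. One has to be careful that after integrating by parts the derivative $\D\rho$ disappears (otherwise $\D\rho\notin L^1$ in general) and that the remaining $\int|\rho|\nu^{q}|\D\eta|$-type term, when Hölder is applied, couples correctly with the Sobolev exponent at every stage of the iteration. Secondarily, Step 3 requires a clean argument that the energy finiteness $u\in D_0^{1,2}$ together with $\rho\in\mathcal{X}^*$ really forces global $L^p$-integrability of $\nu$ — this is where the restriction to $\rho\in L^p(\R^n)$ (global, not merely local as in the conjecture of \cite{BI}) is used, and it is the reason the present theorem proves the conjecture only in the global-$L^p$ case.
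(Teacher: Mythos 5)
Your plan matches the paper's proof step for step: the same linearized inequality $\div(A\D\nu)\geq\langle\D u,\D\rho\rangle$ with $A=\nu^{-2}I+\D u\otimes\D u$, the same Caccioppoli--Moser iteration giving the local bound $\sup_{B_{R/2}}\nu\lesssim$ (local $L^p$ norms of $\nu$ and $\rho$) of Theorem \ref{th:nuestimate} (with $p>n$ entering exactly as you say, so that the H\"older exponent $p/(p-2)$ stays below $n/(n-2)$ and the iteration gains), and the same global $L^p$ control of $(\nu-k)_+$ obtained by testing with $u$ times powers of $\nu$, using $\Vert u\Vert_\infty\leq M$ and the observation that $\nu\geq k$ forces $|\D u|^2\geq 1-k^{-2}$. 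The one mechanism you leave implicit is how the derivative on $\rho$ is actually removed: the paper integrates $\int\langle\D u,\D\rho\rangle\varphi$ by parts and then substitutes the equation $\div(\nu\D u)=-\rho$ for the resulting $\Delta u$, together with the eigenvector identity $A\D u=\D u$ and Cauchy--Schwarz in the $A$-inner product, which converts the error into $\rho^2$ terms handled by H\"older with exponent $p/2$ -- precisely the "absorption" you gesture at.
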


Throughout this section we assume that $u\in D_0^{1,2}(\R^n)\cap C_b^2(\R^n)$ satisfies $|\D u|<1$ in $\R^n$ and $u$ satisfies equation (\ref{eq:mainequation}) pointwise in $\R^n$. We study the function $\nu\in C_b^1(\R^n)$ defined by
\begin{equation*}
\nu := \frac{1}{\sqrt{1-|\D u|^2}}.
\end{equation*}
We note that $\nu\geq 1$ by definition. By assumption, we have $\D u\in L^2(\R^n)\cap C_b^1(\R^n)$ and it follows that $\D u(x)\rightarrow 0$, as $|x|\rightarrow \infty$. Thus $\nu$ satifies the boundary condition $\nu(x)\rightarrow 1$, as $|x|\rightarrow\infty$. In particular, the continuous function $\nu$ is bounded.

The H\"older continuity of the derivative of the solution follows from classical theory once the derivative estimate of Theorem \ref{th:derivativeapriori} is established.
\begin{corollary}\label{cor:aprioriholder}
Under the assumptions of Theorem \ref{th:derivativeapriori} there exists $\alpha=\alpha(n,p,\theta)>0$ and $K=K(n,p,\theta,\Vert\rho\Vert_p)$ such that
\begin{equation*}
\sup_{x,y\in\R^n}\frac{|\D u(x)-\D u(y)|}{|x-y|^\alpha}\leq K.
\end{equation*}
\end{corollary}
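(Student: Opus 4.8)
The plan is to rewrite the equation (\ref{eq:mainequation}) in divergence form with a uniformly elliptic coefficient matrix and then invoke the De Giorgi--Nash--Moser type estimate, Theorem \ref{th:C1,alpha}. Recall that the left-hand side of (\ref{eq:mainequation}) is $-\operatorname{div}\big(\nabla F(\D u)\big)$ where $F(\xi)=1-\sqrt{1-|\xi|^2}$; differentiating the equation with respect to $x_k$ shows that each partial derivative $w:=\partial_k u$ is a weak solution of
\begin{equation*}
\div\big(\D^2 F(\D u)\,\D w\big)=-\partial_k\rho=\partial_k(-\rho)\quad\text{in }\R^n.
\end{equation*}
Thus I would set $A(x):=\D^2 F(\D u(x))$, $g:=0$, and $f:=-\rho e_k$ in Theorem \ref{th:C1,alpha}. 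By Theorem \ref{th:derivativeapriori} we have $|\D u|\leq 1-\theta$ everywhere, and a direct computation of the Hessian of $F$ gives
\begin{equation*}
\D^2 F(\xi)=\frac{1}{\sqrt{1-|\xi|^2}}\Big(I+\frac{\xi\otimes\xi}{1-|\xi|^2}\Big),
\end{equation*}
whose eigenvalues are $(1-|\xi|^2)^{-1/2}$ (multiplicity $n-1$) and $(1-|\xi|^2)^{-3/2}$. Hence on the set $\{|\xi|\leq 1-\theta\}$ one has $\lambda I\le A\le \Lambda I$ with $\lambda=\lambda(\theta)>0$ and $\Lambda=\Lambda(\theta)>0$, and in particular the ellipticity ratio $\Lambda/\lambda$ is bounded by a constant depending only on $\theta$.

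Next I would localize: fix $x_0\in\R^n$, take $R=1$, $B_R=B(x_0,1)$, and apply Theorem \ref{th:C1,alpha} with $p>n$ (the same $p$ as in the hypothesis) to each $w=\partial_k u$. Since $\rho\in L^p(\R^n)$ we have $f=-\rho e_k\in L^p(B_R;\R^n)$ with $\Vert f\Vert_{p,B_R}\le\Vert\rho\Vert_p$, and $g=0\in L^{p/2}(B_R)$ trivially; moreover $w\in W^{1,2}(B_R)$ because $u\in C_b^2(\R^n)$. The theorem then yields
\begin{equation*}
[\partial_k u]_{\alpha,B_{1/2}(x_0)}\le C\big(\Vert\partial_k u\Vert_{\infty,B_1(x_0)}+\Vert\rho\Vert_{p,B_1(x_0)}\big)\le C\big(1+\Vert\rho\Vert_p\big),
\end{equation*}
with $\alpha=\alpha(n,p,\Lambda/\lambda)=\alpha(n,p,\theta)\in(0,1]$ and $C=C(n,\Lambda/\lambda,p)=C(n,p,\theta)$, both independent of $x_0$ and $k$. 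Because $B_{1/2}(x_0)$ covers $\R^n$ as $x_0$ ranges over $\R^n$, and since a function with a uniform local Hölder bound on balls of fixed radius is globally Hölder (for points $x,y$ with $|x-y|\ge 1/2$ one simply uses $|\D u(x)-\D u(y)|\le 2\Vert\D u\Vert_\infty\le 2\le 4\cdot(1/2)^{-\alpha}|x-y|^\alpha$), summing over $k=1,\dots,n$ gives the claimed global bound
\begin{equation*}
\sup_{x,y\in\R^n}\frac{|\D u(x)-\D u(y)|}{|x-y|^\alpha}\le K
\end{equation*}
with $K=K(n,p,\theta,\Vert\rho\Vert_p)$.

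I do not expect a genuine obstacle here: the only points requiring care are (i) checking that differentiating the equation is legitimate, which is immediate since $u\in C_b^2$ and $\rho\in C^1$ under the standing assumptions, so $\partial_k u\in W^{1,2}_{loc}$ solves the differentiated equation classically and hence weakly; (ii) verifying that the ellipticity constants $\lambda,\Lambda$ of $A=\D^2F(\D u)$ depend only on $\theta$, which follows from the explicit Hessian formula and $\Vert\D u\Vert_\infty\le 1-\theta$; and (iii) passing from the local estimate on balls of radius $1/2$ to a global one, which is the elementary patching argument indicated above. The mild subtlety is that Theorem \ref{th:C1,alpha}'s constant $C$ nominally depends on $R$, but we have frozen $R=1$, so this dependence is harmless.
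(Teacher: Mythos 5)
Your proposal is correct and is essentially the paper's own argument: differentiate the equation so that each $\partial_k u$ weakly solves $\div(\D^2F(\D u)\D\partial_k u)=\div(-\rho e_k)$, use $\Vert\D u\Vert_\infty\le 1-\theta$ from Theorem \ref{th:derivativeapriori} to get uniform ellipticity of $\D^2F(\D u)=\nu I+\nu^3\D u\otimes\D u$ with ratio depending only on $\theta$, and apply Theorem \ref{th:C1,alpha} on unit balls. The only difference is that you spell out the patching from local to global H\"older bounds, which the paper dispatches with the remark that $\Vert\D u\Vert_\infty\le 1$.
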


\begin{proof}
As is well-known, the equation (\ref{eq:mainequation}) can be differentiated to obtain an equation for the partial derivatives of $u$. Let $\varphi\in C_c^2(\R^n)$ and let $i=1,\dots,n$. Testing (\ref{eq:mainequation}) with the function $\partial_i\varphi\in C_c^1(\R^n)$ and integrating by parts, we have
\begin{equation*}
\begin{aligned}
-\int_{\R^n}\varphi\partial_i\rho\dx &= \int_{\R^n}\rho\partial_i\varphi\dx \\
&=\int_{\R^n}\langle\D F(Du),\D\partial_i\varphi\rangle\dx \\
&=\int_{\R^n}\langle\D^2F(\D u)\D\partial_iu,\D\varphi\rangle\dx.
\end{aligned}
\end{equation*}
By approximation, the equation
\begin{equation}\label{1}
\int_{\R^n}\langle\D^2F(\D u)\D\partial_iu,\D\varphi\rangle\dx=-\int_{\R^n}\varphi\partial_i\rho\dx
\end{equation}
holds for all $\varphi\in C_c^1(\R^n)$. That is, the partial derivative $\partial_i u$ is a weak solution of the equation
\begin{equation}\label{5}
-\div\big(\D^2F(\D u)\D\partial_i u\big)=\partial_i\rho.
\end{equation}
Since $\D^2F(\D u)=\nu I+\nu^3\D u\otimes\D u$, the ellipticity bounds
\begin{equation}\label{4}
\nu|\xi|^2\leq\langle \D^2F(\D u)\xi,\xi\rangle\leq \nu^3|\xi|^2
\end{equation}
hold for all $\xi\in\R^n$. By Theorem \ref{th:derivativeapriori}, we have $1\leq\nu\leq \frac{1}{\sqrt{1-(1-\theta)^2}}$, where $\theta=\theta(n,p,\Vert\rho\Vert_{\mathcal{X}^*},\Vert\rho\Vert_p)>0$. Therefore (\ref{4}) implies that the coefficient matrix $\D^2F(\D u)$ is uniformly elliptic in $\R^n$ with an ellipticity ratio depending only on $\theta$. Thus $\partial_i u$ is a weak solution of a uniformly elliptic divergence form equation (\ref{5}). By Theorem \ref{th:C1,alpha}, we have
\begin{equation*}
[\partial_iu]_{\alpha,B_1}\leq C\big(\Vert\partial_i u\Vert_\infty+\Vert\rho\Vert_{p}\big)\leq C\big(1+\Vert\rho\Vert_{p}\big),
\end{equation*}
where $\alpha=\alpha(n,p,\theta)>0$, $C=C(n,p,\theta)>0$, and $B_1=B(x,1)$ for any $x\in\R^n$. The result follows immediately from the previous estimate since $\Vert \D u\Vert_\infty\leq 1$.
\end{proof}

Before going into the proof of Theorem \ref{th:derivativeapriori}, we point out two straightforward estimates. Testing equation (\ref{eq:mainequation}) by the solution itself, we have an $L^2$ estimate for the derivative of the solution. See \cite[Proposition 2.7]{BdAP} for a proof that applies to a minimizer $u\in\mathcal{X}$.
\begin{lemma}\label{lm:derivativeL2estimate}
Let $\rho\in \mathcal{X}^*$. Suppose $u\in D_0^{1,2}(\R^n)\cap C_b^2(\R^n)$, $|\D u|<1$ in $\R^n$, and $u$ is a classical solution of (\ref{eq:mainequation}). Then we have
\begin{equation*}
\Vert u\Vert_{D_0^{1,2}(\R^n)}\leq \Vert \rho\Vert_{\mathcal{X}^*}.
\end{equation*}
\end{lemma}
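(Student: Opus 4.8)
The plan is to test the equation \eqref{eq:mainequation} against the solution $u$ itself. Since $u\in D_0^{1,2}(\R^n)$, it is a legitimate test function (or can be reached by a standard approximation by $C_c^\infty$ functions, using that $u\in C_b^2$ and that $\D u\in L^2\cap L^\infty$ decays at infinity so that all integrations by parts are justified with no boundary contribution). This gives
\begin{equation*}
\int_{\R^n}\Big\langle\frac{\D u}{\sqrt{1-|\D u|^2}},\D u\Big\rangle\dx=\langle\rho,u\rangle.
\end{equation*}
The left-hand side is $\int_{\R^n}\frac{|\D u|^2}{\sqrt{1-|\D u|^2}}\dx$, and since $|\D u|<1$ we have the pointwise inequality $\frac{|\D u|^2}{\sqrt{1-|\D u|^2}}\geq |\D u|^2$ (because $\sqrt{1-|\D u|^2}\le 1$). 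Hence
\begin{equation*}
\Vert u\Vert_{D_0^{1,2}(\R^n)}^2=\int_{\R^n}|\D u|^2\dx\leq\int_{\R^n}\frac{|\D u|^2}{\sqrt{1-|\D u|^2}}\dx=\langle\rho,u\rangle.
\end{equation*}

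Next I would bound the right-hand side by the definition of the dual norm: $\langle\rho,u\rangle\leq\Vert\rho\Vert_{\mathcal{X}^*}\Vert u\Vert_{\mathcal{X}}=\Vert\rho\Vert_{\mathcal{X}^*}\Vert u\Vert_{D_0^{1,2}(\R^n)}$, where I use that $u\in\mathcal{X}$ (it lies in $D_0^{1,2}(\R^n)$ and satisfies $|\D u|<1\le 1$ a.e.) and that the norm on $\mathcal{X}$ is the $D_0^{1,2}$ norm. Combining the two displays,
\begin{equation*}
\Vert u\Vert_{D_0^{1,2}(\R^n)}^2\leq\Vert\rho\Vert_{\mathcal{X}^*}\Vert u\Vert_{D_0^{1,2}(\R^n)},
\end{equation*}
and dividing by $\Vert u\Vert_{D_0^{1,2}(\R^n)}$ (the inequality being trivial if this is zero) yields the claim.

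There is no real obstacle here; the only point requiring a modicum of care is justifying that $u$ is an admissible test function in the weak formulation and that the integration by parts producing $\int\langle\D F(\D u),\D u\rangle$ has no boundary term at infinity. This is handled by approximating $u$ in $D_0^{1,2}(\R^n)$ by $\varphi_k\in C_c^\infty(\R^n)$, using the equation with test function $\varphi_k$, and passing to the limit: the flux $\D u/\sqrt{1-|\D u|^2}$ is bounded in $L^2$ because $|\D u/\sqrt{1-|\D u|^2}|\le |\D u|/\sqrt{1-|\D u|^2}$ and we can start from the a priori fact $\D u\in L^2$, while $\langle\rho,\varphi_k\rangle\to\langle\rho,u\rangle$ by continuity of $\rho\in\mathcal{X}^*$ on $D_0^{1,2}(\R^n)$. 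Since $\rho\in\mathcal{X}^*$ only, one should note the pairing $\langle\rho,u\rangle$ is still well-defined for $u\in\mathcal{X}\subset D_0^{1,2}(\R^n)$; alternatively one may quote \cite[Proposition 2.7]{BdAP} directly as indicated, where this is carried out for the minimizer.
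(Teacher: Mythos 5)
Your proof is correct and takes exactly the route the paper indicates: test the equation with $u$ itself, use the pointwise bound $\nu|\D u|^2\geq|\D u|^2$, and estimate $\langle\rho,u\rangle$ by the dual norm, as in \cite[Proposition 2.7]{BdAP}. The only detail worth tightening is the justification that the flux $\nu\D u$ lies in $L^2(\R^n)$ (needed to pass to the limit in the test functions $\varphi_k$): this does not follow from $\D u\in L^2(\R^n)$ alone, but rather from the boundedness of $\nu$, which the paper obtains by observing that $\D u\in L^2(\R^n)\cap C_b^1(\R^n)$ forces $\D u(x)\rightarrow 0$ as $|x|\rightarrow\infty$, so that $\nu$ is a continuous function tending to $1$ at infinity and hence bounded.
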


By Morrey's inequality, the family $\mathcal{X}$ is continuously embedded in $L^\infty(\R^n)$ (see \cite[Lemma 2.1]{BdAP}) and by Lemma \ref{lm:derivativeL2estimate}, we have a quantitative bound for the supremum of $|u|$ in terms of the given data $\rho$.

\begin{lemma}\label{lm:supestimate}
Let $\rho\in \mathcal{X}^*$. Suppose $u\in D_0^{1,2}(\R^n)\cap C_b^2(\R^n)$, $|\D u|<1$ in $\R^n$, and $u$ is a classical solution of (\ref{eq:mainequation}). Then we have
\begin{equation*}
\sup_{\R^n}|u|\leq M,
\end{equation*}
where $M=M(n,\Vert\rho\Vert_{\mathcal{X}^*})>0$.
\end{lemma}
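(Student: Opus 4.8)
The plan is to combine the $D^{1,2}_0$-bound from Lemma \ref{lm:derivativeL2estimate} with the Sobolev/Morrey embedding that controls $\Vert u\Vert_\infty$ by $\Vert \D u\Vert_2$ on the whole space. Concretely, since $u\in D^{1,2}_0(\R^n)$ with $n\geq 3$, the Sobolev inequality gives $\Vert u\Vert_{2^*}\leq C(n)\Vert \D u\Vert_2$ with $2^*=2n/(n-2)$; and by the embedding result cited from \cite[Lemma 2.1]{BdAP} (which uses the extra pointwise constraint $|\D u|\leq 1$ together with $\D u\in L^2$), one in fact has $\mathcal{X}\hookrightarrow L^\infty(\R^n)$, so that $\Vert u\Vert_\infty\leq C(n)\Vert u\Vert_\mathcal{X}=C(n)\Vert \D u\Vert_2$. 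Plugging in Lemma \ref{lm:derivativeL2estimate} yields
\begin{equation*}
\sup_{\R^n}|u|\leq C(n)\Vert \D u\Vert_2\leq C(n)\Vert \rho\Vert_{\mathcal{X}^*},
\end{equation*}
which is the claim with $M=M(n,\Vert\rho\Vert_{\mathcal{X}^*}):=C(n)\Vert\rho\Vert_{\mathcal{X}^*}$.

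The one point that needs a word is why membership in $\mathcal{X}$ (rather than just $D^{1,2}_0$) is what gives the $L^\infty$ bound: a generic function in $D^{1,2}_0(\R^n)$ need not be bounded, but the pointwise gradient constraint $|\D u|\leq 1$ upgrades the $L^{2^*}$ control to an $L^\infty$ control. One way to see this directly, which I would sketch rather than carry out in full, is: for $x\in\R^n$ and any $r>0$, split the oscillation of $u$ over $B(x,r)$ using $|\D u|\leq 1$, which gives $|u(y)-u(x)|\leq r$ for $y\in B(x,r)$, hence $|u(x)|\leq r+\fint_{B(x,r)}|u|\leq r+C(n)r^{-n/2^*}\Vert u\Vert_{2^*}$; optimizing in $r$ balances the two terms and produces $\Vert u\Vert_\infty\leq C(n)\Vert u\Vert_{2^*}^{2^*/(2^*+n)}\cdot(\text{power})$, and since $2^*/(2^*+n)$ together with the Sobolev estimate reproduces exactly a bound by $\Vert\D u\Vert_2$. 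Rather than redo this computation, I will simply invoke \cite[Lemma 2.1]{BdAP} for the embedding $\mathcal{X}\hookrightarrow L^\infty(\R^n)$, as the excerpt already announces.

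The main obstacle is essentially nonexistent here: both ingredients are quoted results, and the only care required is to track that the embedding constant depends on $n$ only (so that $M$ depends on $n$ and $\Vert\rho\Vert_{\mathcal{X}^*}$ only, as asserted), and that $u$ genuinely lies in $\mathcal{X}$, which follows from the standing hypotheses $u\in D^{1,2}_0(\R^n)$ and $|\D u|<1$ in $\R^n$.
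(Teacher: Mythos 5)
Your proposal is correct and is essentially the paper's argument: the paper likewise obtains the bound by combining the $D^{1,2}_0$-estimate of Lemma \ref{lm:derivativeL2estimate} with the embedding $\mathcal{X}\hookrightarrow L^\infty(\R^n)$ from \cite[Lemma 2.1]{BdAP} (the paper states this in the sentence preceding the lemma rather than in a displayed proof). The only small imprecision is writing $\Vert u\Vert_\infty\leq C(n)\Vert\D u\Vert_2$ as if the embedding were linear --- your own oscillation sketch shows the modulus is a power of $\Vert\D u\Vert_2$ --- but this does not affect the conclusion $M=M(n,\Vert\rho\Vert_{\mathcal{X}^*})$.
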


Next we prove Theorem \ref{th:derivativeapriori}. The proof is divided into two main estimates given in Theorem \ref{th:nuestimate} and Theorem \ref{th:integrability}. The first estimate gives a local supremum bound for the quantity $\nu$ in terms of the $L^p$-norms of $\nu$ and $\rho$.

\begin{theorem}\label{th:nuestimate}
Let $p>n$, $x_0\in\R^n$ and $R>0$. Suppose $u\in D_0^{1,2}(\R^n)\cap C_b^2(\R^n)$, $|\D u|<1$ in $\R^n$, and $u$ is a classical solution of (\ref{eq:mainequation}). Then
\begin{equation*}
\sup_{B(x_0,R/2)}\nu\leq C\Big[\Big(\intbar_{B(x_0,R)}\nu^p\dx\Big)^\frac{n}{p(p-n)}+R^\frac{n}{p-n}\Big(\intbar_{B(x_0,R)}|\rho|^p\dx\Big)^\frac{n}{p(p-n)}\Big]\Big(\intbar_{B(x_0,R)}\nu^p\dx\Big)^\frac{1}{p},
\end{equation*}
where $C=C(n,p)>0$.
\end{theorem}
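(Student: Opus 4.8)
The plan is to adapt to an $L^p$ right-hand side the Moser iteration behind \cite[Theorem 3.5]{BS}. The starting point is the elliptic inequality satisfied by $\nu$. Differentiating (\ref{eq:mainequation}), so that each $\partial_i u$ is a weak solution of (\ref{5}), and combining these $n$ equations with the identity $\partial_i\nu=\nu^3\sum_m\partial_m u\,\partial_{im}u$, one obtains — after discarding a nonnegative term, exactly as in the derivation of (\ref{eq:nueq}) — an inequality of the form (\ref{eq:nueq}) with $H=-\rho$, that is
\begin{equation*}
\div\big(A\D\nu\big)\geq-\langle\D\rho,\D u\rangle\qquad\text{in }\R^n,
\end{equation*}
where $A$ is symmetric and uniformly elliptic but with ellipticity ratio controlled only by $\nu^2$. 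Since $\nu\geq1$, this exhibits $\nu$ as a subsolution of a degenerate elliptic equation, and it is precisely this $\nu^2$-degeneracy that obstructs a routine application of Moser's local boundedness estimate and that forces the conclusion to be superlinear in the $L^p$-norm of $\nu$.

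First I would establish a Caccioppoli-type inequality. Fix a cut-off $\eta\in C_c^\infty(B(x_0,R))$ and test the subsolution inequality against $\varphi:=\eta^2(\nu^\gamma-1)\geq0$, for a parameter $\gamma>0$. The term $\int\langle\D\rho,\D u\rangle\varphi\dx$ must be rewritten with no derivatives falling on $\rho$: integrating by parts and replacing $\Delta u$ by $-\rho\nu^{-1}-\nu^{-1}\langle\D u,\D\nu\rangle$ — the contracted form of (\ref{eq:mainequation}), which follows from $\langle\D^2F(\D u),\D^2 u\rangle=-\rho$ together with $\D|\D u|^2=2\nu^{-3}\D\nu$ — turns it into a sum of terms of the shape $\int\rho^2\nu^{\gamma-1}\eta^2\dx$, $\int|\rho|\,|\D u|\,|\D\varphi|\dx$ and $\int|\rho|\,\nu^{-1}|\langle\D u,\D\nu\rangle|\varphi\dx$. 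Using the lower ellipticity bound of $A$ on the leading quadratic term, Cauchy--Schwarz with respect to the quadratic form $A$ on the mixed term (so that the upper ellipticity bound only ever multiplies $|\D\eta|^2$), and Young's inequality to absorb every first-order quantity, one reaches an estimate of the form
\begin{equation*}
\int\big|\D\big(\eta\,\nu^{(\gamma+1)/2}\big)\big|^2\dx\leq C(\gamma)\int\nu^{\gamma+k}|\D\eta|^2\dx+C(\gamma)\int\rho^2\,\nu^{\gamma-1}\eta^2\dx,
\end{equation*}
with a fixed shift $k$ produced by the $\nu^2$-degeneracy (the inequality $\nu\geq1$ is used throughout to dominate smaller powers of $\nu$ by larger ones).

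The hypothesis $p>n$ enters exactly here. Combining the Caccioppoli and Sobolev inequalities controls $\Vert\eta\,\nu^{(\gamma+1)/2}\Vert_{2^*}^2$ by the right-hand side of the Caccioppoli inequality; applying H\"older's inequality to the $\rho$-term there produces $\Vert\rho\Vert_{p,B(x_0,R)}^2$ times the $L^{p/(p-2)}$-norm of $\eta^2\nu^{\gamma-1}$, and since $p>n$ is exactly the condition $\tfrac{p}{p-2}<\tfrac{n}{n-2}=\tfrac{2^*}{2}$, this $\nu$-factor lies strictly below the Sobolev exponent, so one further interpolation splits it into a piece absorbed into the left-hand side and a lower power of a weaker $L^q$-norm of $\nu$ times a power of $\Vert\rho\Vert_{p,B(x_0,R)}$. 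The outcome is a self-improving inequality of reverse-H\"older type between concentric balls $B(x_0,r)\subset B(x_0,r')\subset B(x_0,R)$ whose constant grows only polynomially in $\gamma$ and in $(r'-r)^{-1}$.

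I would then run a Moser iteration, applying the self-improving inequality over a geometric sequence of radii shrinking from $R$ to $R/2$ and exponents $q_j\to\infty$ governed by a recursion of the form $q_{j+1}=(q_j-k)\tfrac{n}{n-2}$, started at an exponent $q_0=q_0(n,p)$ chosen large enough that the recursion is expanding and the resulting infinite product of constants converges; this bounds $\sup_{B(x_0,R/2)}\nu$ by a fixed power of $\intbar_{B(x_0,R)}\nu^{q_0}$ plus the accumulated $\rho$-contributions. Finally, using $\nu^{q_0}\leq\nu^p\,\big(\sup_{B(x_0,R)}\nu\big)^{q_0-p}$ and running the entire argument for every pair $B(x_0,t)\subset B(x_0,s)$ with $R/2\le t<s\le R$, the supremum of $\nu$ over the larger ball enters with an exponent strictly below $1$ and can be absorbed by a standard iteration lemma, leaving on the right-hand side only $\intbar_{B(x_0,R)}\nu^p$ and $\intbar_{B(x_0,R)}|\rho|^p$ with the exponents and powers of $R$ in the statement. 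The main obstacle is precisely this bookkeeping: because the operator degenerates like $\nu^2$, the iteration does not close at the natural exponent, and one must keep careful track of how the powers of $\nu$ propagate through the Caccioppoli inequality, the Moser scheme and the final absorption, so that the right-hand side collapses to exactly the $L^p$-norms of $\nu$ and $\rho$ over $B(x_0,R)$, with the superlinear exponent $\tfrac{n}{p(p-n)}$ asserted.
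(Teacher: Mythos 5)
Your proposal follows essentially the same route as the paper's proof: the linearized subsolution inequality for $\nu$, integration by parts via the equation itself so that no derivatives fall on $\rho$, a Caccioppoli inequality obtained with power-type test functions and Cauchy--Schwarz with respect to the form $\langle A\cdot,\cdot\rangle$ (the paper also exploits $A\D u=\D u$ here), the observation that $p>n$ is exactly $\tfrac{p}{p-2}<\tfrac{n}{n-2}$, and a Moser iteration with the fixed exponent shift caused by the $\nu^{2}$-degeneracy. The only (cosmetic) divergence is in closing the iteration: the paper starts at exponent $p$ and lets each step consume one factor of $\big(\intbar\nu^p\big)^{2/p}+R^2\big(\intbar|\rho|^p\big)^{2/p}$, so the geometric series of exponents yields $\tfrac{n}{p(p-n)}$ directly with no absorption lemma, whereas you interpolate the $\rho$-term into the Sobolev term and absorb a power of $\sup\nu$ at the end — both are standard and lead to the stated estimate.
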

\begin{proof}
Let $\varphi\in C_c^1(\R^n)$ and suppose $\varphi\geq 0$. Using (\ref{1}), we compute that
\begin{equation}\label{2}
\begin{aligned}
\sum_{i=1}^n\int_{\R^n}\langle \D^2F(\D u)\D\partial_i u,\D\varphi\rangle\partial_i u\dx &= \sum_{i=1}^n\int_{\R^n}\varphi\partial_i u\partial_i\rho\dx \\
&\quad -\sum_{i=1}^n\int_{\R^n}\langle\D^2F(\D u)\D \partial_i u,\D \partial_i u\rangle\varphi\dx.
\end{aligned}
\end{equation}
Since $F$ is convex, we have the estimate
\begin{equation}\label{3}
\sum_{i=1}^n\int_{\R^n}\langle\D^2F(\D u)\D \partial_i u,\D \partial_i u\rangle\varphi\dx\geq 0.
\end{equation}
Denote $A:=\nu^{-3}\D^2F(\D u)=\nu^{-2}I+\D u\otimes \D u$ and note that $\D\nu=\nu^3\sum_{i=1}^n\partial_i u\D\partial_i u$. We use (\ref{3}) to estimate the right hand side of the equation (\ref{2}) and derive the estimate
\begin{equation}\label{eq:generalnupde}
\int_{\R^n} \langle A\D\nu,\D\varphi\rangle\dx\leq \int_{\R^n}\langle\D u,\D\rho\rangle\varphi\dx.
\end{equation}
It is easily verified that
\begin{equation}\label{eq:ellipticity}
\nu^{-2}|\xi|^2\leq \langle A\xi,\xi\rangle\leq |\xi|^2,
\end{equation}
for all $\xi\in\R^n$, and
\begin{equation}\label{eq:eigenvector}
A\D u = \D u.
\end{equation}

By (\ref{eq:generalnupde}) and (\ref{eq:mainequation}), we have
\begin{equation}\label{eq:5}
\begin{aligned}
\int_{\R^n} \langle A\D\nu,\D\varphi\rangle\dx &\leq \int_{\R^n} \langle\D u,\D\rho\rangle\varphi\dx \\
&=\int_{\R^n} \nu^{-1}\varphi\rho^2\dx+\int_{\R^n} \nu^{-1}\langle\D u,\D\nu\rangle\varphi\rho\dx-\int_{\R^n}\langle\D u,\D\varphi\rangle\rho\dx,
\end{aligned}
\end{equation}
for any $\varphi\in C_c^1(\R^n)$ such that $\varphi\geq 0$. Let $q>n$ and $\eta\in C_c^\infty(\R^n)$. Choosing $\varphi := \eta^2\nu^{q-1}\in C_c^1(\R^n)$ in (\ref{eq:5}), we have
\begin{equation}\label{eq:107}
\begin{aligned}
(q-1)\int_{\R^n}\eta^2\nu^{q-2}\langle A\D\nu,\D\nu\rangle\dx &\leq\int_{\R^n}\eta^2\nu^{q-2}\rho^2\dx \\
&\quad -(q-2)\int_{\R^n}\eta^2\nu^{q-2}\langle \D u,\D\nu\rangle\rho\dx \\
&\quad -2\int_{\R^n}\eta\nu^{q-1}\langle \D u,\D\eta\rangle\rho\dx \\
&\quad -2\int_{\R^n}\eta\nu^{q-1}\langle A\D\nu,\D\eta\rangle\dx.
\end{aligned}
\end{equation}
We use (\ref{eq:eigenvector}) and the Cauchy--Schwarz inequality with respect to the inner product $\langle A\cdot,\cdot\rangle$ to estimate the right hand side of (\ref{eq:107}). We have
\begin{equation}\label{eq:6}
\begin{aligned}
(q-1)\int_{\R^n} \eta^2\nu^{q-2}\langle A\D\nu,\D\nu\rangle\dx \leq & \int_{\R^n}\eta^2\nu^{q-2}\rho^2\dx \\
&+(q-2)\int_{\R^n}\eta^2\nu^{q-2}|\D u|\sqrt{\langle A\D\nu,\D\nu\rangle}|\rho|\dx \\
&+2\int_{\R^n}|\eta|\nu^{q-1}|\D u|\sqrt{\langle A\D\eta,\D\eta\rangle}|\rho|\dx \\
&+2\int_{\R^n}|\eta|\nu^{q-1}\sqrt{\langle A\D\nu,\D\nu\rangle}\sqrt{\langle A\D\eta,\D\eta\rangle}\dx.
\end{aligned}
\end{equation}
By (\ref{eq:6}) and Cauchy's inequality (the elementary inequality $ab\leq \epsilon a^2+(4\epsilon)^{-1}b^2$, for all $a,b\geq 0$ and $\epsilon>0$), we have
\begin{equation}\label{eq:6B}
\begin{aligned}
\frac{q-1}{2}\int_{\R^n} \eta^2\nu^{q-2}\langle A\D\nu,\D\nu\rangle\dx &\leq \frac{5}{q-1}\int_{\R^n}\nu^q\langle A\D\eta,\D\eta\rangle\dx \\
&\quad+\big(\frac{(q-2)^2+(q-1)^2}{q-1}+1\big)\int_{\R^n}\eta^2\nu^{q-2}\rho^2\dx.
\end{aligned}
\end{equation}
Since $q>n\geq 3$, we may divide both sides of (\ref{eq:6B}) by $(q-1)/2$ and estimate to get
\begin{equation}\label{eq:generalcaccioppoli}
\int_{\R^n} \eta^2\nu^{q-2}\langle A\D\nu,\D\nu\rangle\dx \leq \frac{10}{(q-1)^2}\int_{\R^n}\nu^q\langle A\D\eta,\D\eta\rangle\dx+5\int_{\R^n}\eta^2\nu^{q-2}\rho^2\dx.
\end{equation}

Define $\phi:=\eta\nu^\frac{q-2}{2}\in C_c^1(\R^n)$. We note that
\begin{equation}\label{eq:7}
|\D\phi|^2\leq \frac{1}{2}(q-2)^2\eta^2\nu^{q-4}|\D\nu|^2+2\nu^{q-2}|\D\eta|^2.
\end{equation}
We denote $\chi:=\frac{n}{n-2}$. By Sobolev inequality, we have
\begin{equation}\label{eq:8}
\Big(\int_{\R^n} \phi^{2\chi}\dx\Big)^\frac{1}{\chi}\leq c_0^2\int_{\R^n} |\D\phi|^2\dx,
\end{equation}
where $c_0=c_0(n)>0$. By (\ref{eq:8}), (\ref{eq:7}), (\ref{eq:generalcaccioppoli}) and (\ref{eq:ellipticity}) it follows that
\begin{equation}\label{eq:9}
\begin{aligned}
\Big(\int_{\R^n} \eta^{2\chi}\nu^{(q-2)\chi}\dx\Big)^\frac{1}{\chi} &\leq \frac{c_0^2(q-2)^2}{2}\int_{\R^n} \eta^2\nu^{q-4}|\D\nu|^2\dx+2c_0^2\int_{\R^n} \nu^{q-2} |\D\eta|^2\dx \\
&\leq 7c_0^2\int_{\R^n} \nu^q |\D\eta|^2\dx+3c_0^2(q-2)^2\int_{\R^n} \eta^2\nu^{q-2}\rho^2\dx.
\end{aligned}
\end{equation}

Let $m=0,1,\dots$. We define
\begin{equation*}
R_m:=\big(2^{-1}+2^{-(m+1)}\big)R \quad\text{and}\quad  B_m:=B(x_0,R_m).
\end{equation*}
Let $\eta_m\in C_c^\infty(B_m)$ be such that $0\leq \eta_m\leq 1$, $\eta_m\equiv 1$ in $B_{m+1}$, and $|\D\eta_m|\leq \frac{2}{R_m-R_{m+1}}\leq 2^{m+3}R^{-1}$. Choosing the cut-off function $\eta_m$ in (\ref{eq:9}), we have
\begin{equation}\label{eq:16}
\begin{aligned}
\Big(\intbar_{B_{m+1}}\nu^{(q-2)\chi}\dx\Big)^\frac{1}{\chi} \leq& \ 4^{m+n+3}c_0^2\intbar_{B_m} \nu^q\dx \\
&+4^{n}c_0^2(q-2)^2R^2\intbar_{B_m}\nu^{q-2}\rho^2\dx.
\end{aligned}
\end{equation}
We estimate the terms on the right hand side of (\ref{eq:16}) separately. Define $\alpha:=\frac{p}{p-2}$. Since $p>n$, we have $\alpha<\chi$. In what follows we assume $q\geq p$. Hence we may apply H\"older's inequality to conclude
\begin{equation}\label{eq:cutoffterm}
\intbar_{B_m} \nu^q\dx\leq \Big(\intbar_{B_m}\nu^p\dx\Big)^\frac{2}{p}\Big(\intbar_{B_m} \nu^{\alpha(q-2)}\dx\Big)^\frac{1}{\alpha},
\end{equation}
and similarly we have
\begin{equation}\label{eq:curvatureterm}
\intbar_{B_m}\nu^{q-2}\rho^2\dx\leq \Big(\intbar_{B_m}|\rho|^p\dx\Big)^\frac{2}{p}\Big(\intbar_{B_m}\nu^{\alpha(q-2)}\dx\Big)^\frac{1}{\alpha}
\end{equation}

By (\ref{eq:16}), (\ref{eq:cutoffterm}) and (\ref{eq:curvatureterm}), we have
\begin{equation}\label{eq:generalhigherintegrability}
\Big(\intbar_{B_{m+1}}\nu^{(q-2)\chi}\dx\Big)^\frac{1}{(q-2)\chi} \leq 4^\frac{m}{q-2}\tau^\frac{2}{q-2}(q-2)^\frac{2}{q-2}\Big(\intbar_{B_m} \nu^{\alpha(q-2)}\dx\Big)^\frac{1}{\alpha(q-2)},
\end{equation}
where
\begin{equation}\label{eq:deftau}
\tau^2:=4^{2n+3}c_0^2\Big(\intbar_{B(x_0,R)}\nu^p\dx\Big)^\frac{2}{p}+4^{2n}c_0^2R^2\Big(\intbar_{B(x_0,R)}|\rho|^p\dx\Big)^\frac{2}{p}.
\end{equation}
We define $\beta:=\frac{\chi}{\alpha}>1$ and
\begin{equation*}
q_0:=p,\quad q_m:=\beta^mp \quad\text{and}\quad M_m:=\Big(\intbar_{B_m}\nu^{q_m}\dx\Big)^\frac{1}{q_m}.
\end{equation*}
By (\ref{eq:generalhigherintegrability}), we have
\begin{equation}\label{eq:generaliteration}
M_{m+1}\leq (2\beta)^{\frac{2}{p-2}m\beta^{-m}}\big((p-2)\tau\big)^{\frac{2}{p-2}\beta^{-m}}M_m
\end{equation}
Iterating (\ref{eq:generaliteration}) and taking the limit as $m\rightarrow\infty$, we have
\begin{equation}\label{eq:iterationlimit}
\lim_{m\rightarrow\infty}M_m\leq (2\beta)^{\frac{2}{p-2}\sum_{j=1}^\infty j\beta^{-j}}\big((p-2)\tau\big)^{\frac{2}{p}\sum_{j=0}^\infty\beta^{-j}}M_0
\end{equation}
As $\beta>1$ we have
\begin{equation}\label{eq:17}
\frac{2}{p-2}\sum_{j=0}^\infty\beta^{-j}=\frac{n}{p-n} \quad\text{and}\quad \frac{2}{p-2}\sum_{j=1}^\infty j\beta^{-j}=\frac{pn(n-2)}{2(p-n)^2}.
\end{equation}
By (\ref{eq:iterationlimit}), (\ref{eq:17}) and (\ref{eq:deftau}), we have
\begin{equation*}
\sup_{B(x,R/2)}\nu\leq C\Big[\Big(\intbar_{B(x,R)}\nu^p\dx\Big)^\frac{n}{p(p-n)}+R^\frac{n}{p-n}\Big(\intbar_{B(x,R)}|\rho|^p\dx\Big)^\frac{n}{p(p-n)}\Big]\Big(\intbar_{B(x,R)}\nu^p\dx\Big)^\frac{1}{p},
\end{equation*}
where $C=C(n,p)>0$. This completes the proof.
\end{proof}

Since $\nu(x)\rightarrow 1$, as $|x|\rightarrow\infty$, the function $(\nu-k)_+:=\max(\nu-k,0)$ is compactly supported for any $k>1$. The $L^p$-norm of $(\nu-k)_+$ can be controlled globally in terms of the $L^p$-norm of $\rho$ and the $L^\infty$-norm of $u$.

\begin{theorem}\label{th:integrability}
Let $p>n$ and $k>1$. Suppose $u\in D_0^{1,2}(\R^n)\cap C_b^2(\R^n)$, $|\D u|<1$ in $\R^n$, and $u$ is a classical solution of (\ref{eq:mainequation}). Then
\begin{equation}\label{eq:pintegrability}
\Big(\int_{\R^n} (\nu-k)_+^p\dx\Big)^\frac{1}{p}\leq CM\Big(\int_{\R^n} |\rho|^p\dx\Big)^\frac{1}{p},
\end{equation}
where $C=C(p,k)>0$ and $M=M(n,\Vert\rho\Vert_{\mathcal{X}^*})>0$ is the supremum bound for $u$ given in Lemma \ref{lm:supestimate}.
\end{theorem}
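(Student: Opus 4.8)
The plan is to combine the differential inequality for $\nu$ with a global Cacciopoli--Sobolev estimate, the slack again coming from the convexity of $F$ through (\ref{eq:5}), and to read the factor $\sup|u|$ off the measure of the superlevel set $\{\nu>k\}$. I would first record two a priori facts. Testing (\ref{eq:mainequation}) with $u$ and using Lemma \ref{lm:derivativeL2estimate} gives $\int_{\R^n}(\nu-\nu^{-1})\dx=\langle\rho,u\rangle\le\Vert\rho\Vert_{\mathcal X^*}\Vert u\Vert_{D_0^{1,2}(\R^n)}\le\Vert\rho\Vert_{\mathcal X^*}^2$, so $\Vert\nu-1\Vert_{L^1(\R^n)}\le\Vert\rho\Vert_{\mathcal X^*}^2$ (since $\nu-1\le\nu-\nu^{-1}$) and, by Chebyshev, $|\{\nu>k\}|\le\Vert\rho\Vert_{\mathcal X^*}^2/(k-1)$; in particular $(\nu-k)_+$ is compactly supported with support of controlled measure, and since $2/n<1$ this measure estimate is dominated by a constant times $M$, where $\sup_{\R^n}|u|\le M$ by Lemma \ref{lm:supestimate}. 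Moreover, since $u\in C_b^2(\R^n)$ with $\D u(x)\to0$ at infinity, $|\D u|$ attains a maximum strictly below $1$, so $\nu$ is bounded, for now only qualitatively.

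Next I would insert $\varphi:=(\nu-k)_+^{p-1}$ into (\ref{eq:5}). This function is nonnegative, compactly supported (as $\nu\to1$ at infinity and $k>1$), and Lipschitz (here $p>n\ge3$ forces $p-1>2$), hence admissible after mollification. Writing $w:=(\nu-k)_+$, the left side becomes $(p-1)\int w^{p-2}\langle A\D\nu,\D\nu\rangle\dx$ and the right side consists of $\int\nu^{-1}w^{p-1}\rho^2$ and two cross terms of the type $\int w^{p-2}\langle\D u,\D\nu\rangle\rho$. Using the Cauchy--Schwarz inequality in the inner product $\langle A\,\cdot\,,\cdot\rangle$ together with (\ref{eq:eigenvector}) and $|\D u|<1$ one gets $|\langle\D u,\D\nu\rangle|\le\langle A\D\nu,\D\nu\rangle^{1/2}$, while $\nu^{-1}w\le1$ on $\{\nu>k\}$; Young's inequality then absorbs the cross terms into the left side, just as in the passage from (\ref{eq:107}) to (\ref{eq:generalcaccioppoli}), and produces the Cacciopoli inequality
\[
\int_{\R^n}(\nu-k)_+^{p-2}\langle A\D\nu,\D\nu\rangle\dx\le C(p)\int_{\{\nu>k\}}(\nu-k)_+^{p-2}\rho^2\dx\le C(p)\Vert(\nu-k)_+\Vert_p^{p-2}\Vert\rho\Vert_p^2,
\]
the last inequality being Hölder's with exponents $p/(p-2)$ and $p/2$.

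To convert the left side into a norm of $w$ I would apply the Sobolev inequality to a well-chosen function of $\nu$, imitating the step (\ref{eq:9}): since $\langle A\D\nu,\D\nu\rangle\ge\nu^{-2}|\D\nu|^2$ by (\ref{eq:ellipticity}), a function $\Phi$ with $|\D\Phi|^2=w^{p-2}\nu^{-2}|\D\nu|^2$ has Dirichlet energy controlled by the Cacciopoli inequality, so the degenerate weight $\nu^{-2}$ is exactly compensated when one passes to the gradient of $\Phi$. Converting back to $w$ and closing the loop by the interpolation $\Vert w\Vert_p\le|\{\nu>k\}|^{2/(pn)}\Vert w\Vert_{pn/(n-2)}$ (legitimate because $w$ has compact support and $pn/(n-2)>p$), and invoking the measure bound from the first step, which supplies the factor $M$, I would arrive at $\Vert(\nu-k)_+\Vert_p\le CM\Vert\rho\Vert_p$.

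The main obstacle is precisely the degeneracy of $A$: estimate (\ref{eq:ellipticity}) only gives $\langle A\D\nu,\D\nu\rangle\ge\nu^{-2}|\D\nu|^2$, and the weight $\nu^{-2}$ is smallest exactly on the region $\{\nu\gg1\}$ that one wants to control, so a naive Sobolev step applied to $(\nu-k)_+^{p/2}$ loses too much. Arranging the Sobolev step to lose nothing — so that the resulting inequality has only $\Vert(\nu-k)_+\Vert_p$ on the right, closes with no smallness hypothesis on $\rho$, and has a final constant depending only on $n$, $p$, $\Vert\rho\Vert_p$ and $\Vert\rho\Vert_{\mathcal X^*}$ rather than on the a priori qualitative bound on $\nu$ — is the heart of the matter, and it is achieved by matching the gradient of the auxiliary function of $\nu$ to the $A$-weighted energy appearing in the Cacciopoli inequality, iterating the latter over the exponent $q\ge p$ if necessary.
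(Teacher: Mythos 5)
Your Caccioppoli inequality $\int_{\R^n}(\nu-k)_+^{p-2}\langle A\D\nu,\D\nu\rangle\dx\le C(p)\int_{\R^n}(\nu-k)_+^{p-2}\rho^2\dx$ is correct and is exactly the paper's (\ref{eq:pcaccioppoli}); the preliminary measure bound $|\{\nu>k\}|\le\Vert\rho\Vert_{\mathcal X^*}^2/(k-1)$ is also fine. The gap is in the step that is supposed to convert the Caccioppoli inequality into a bound on $\Vert(\nu-k)_+\Vert_p$. If you match the auxiliary function to the $A$-weighted energy, i.e.\ take $\Phi=G(\nu)$ with $G'(\nu)=(\nu-k)_+^{(p-2)/2}\nu^{-1}$, then $G(\nu)$ grows like $\nu^{(p-2)/2}$ (the weight $\nu^{-1}$ costs you exactly the power you hoped to keep), so the Sobolev inequality controls only $\Vert w\Vert_{(p-2)\chi}^{p-2}$, $\chi=\tfrac{n}{n-2}$, by $C\Vert w\Vert_p^{p-2}\Vert\rho\Vert_p^2$ after H\"older. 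Both sides are then of the \emph{same} homogeneity in $w$: interpolating back, $\Vert w\Vert_p\le|\{\nu>k\}|^{\sigma}\Vert w\Vert_{(p-2)\chi}\le C|\{\nu>k\}|^{\sigma}\Vert\rho\Vert_p^{2/(p-2)}\Vert w\Vert_p$, and dividing by $\Vert w\Vert_p$ yields $1\le C|\{\nu>k\}|^{\sigma}\Vert\rho\Vert_p^{2/(p-2)}$ --- either a smallness condition forcing $w\equiv0$, or no information. Iterating over $q\ge p$ does not help: that Moser iteration is Theorem \ref{th:nuestimate}, which upgrades an $L^p$ bound to a sup bound but cannot manufacture the initial $L^p$ bound. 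Your scheme never gains a power of $w$ anywhere, so it cannot close without a smallness hypothesis on $\rho$, which is precisely what the theorem is designed to avoid.

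The missing idea in the paper's proof has nothing to do with Sobolev embedding. One tests the \emph{undifferentiated} equation (\ref{eq:weakcurvature}) with $\varphi=u\nu^{-1}(\nu-k)_+^p$ and exploits the pointwise implication $\nu\ge k\Rightarrow|\D u|^2\ge1-k^{-2}=:\epsilon_k$, valid on the entire support of $(\nu-k)_+$. This yields
\begin{equation*}
\epsilon_k\int_{\R^n}(\nu-k)_+^p\dx\le\int_{\R^n}(\nu-k)_+^p|\D u|^2\dx\le C(p,k)\,M^2\Big(\int_{\R^n}(\nu-k)_+^{p-2}\langle A\D\nu,\D\nu\rangle\dx+\int_{\R^n}(\nu-k)_+^{p-2}\rho^2\dx\Big)
\end{equation*}
after Cauchy--Schwarz in the $A$-inner product and Cauchy's inequality, where $M=\sup|u|$ comes from the factor $u$ in the test function. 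Combining with the Caccioppoli inequality and H\"older gives $\Vert w\Vert_p^p\le CM^2\Vert\rho\Vert_p^2\Vert w\Vert_p^{p-2}$, which closes because the exponent $p$ on the left strictly exceeds $p-2$ on the right. Note in particular that $M$ enters through $|u|\le M$ in the test function, not through the measure of $\{\nu>k\}$ as you propose.
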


\begin{proof}
Using $\varphi=(\nu-k)_+^{p-1}\in C_c^1(\R^n)$ as a testing function in (\ref{eq:generalnupde}) and arguing similarly as in the proof of (\ref{eq:generalcaccioppoli}), it is possible to derive the estimate
\begin{equation}\label{eq:pcaccioppoli}
\int_{\R^n}(\nu-k)_+^{p-2}\langle A\D\nu,\D\nu\rangle\dx\leq 2\int_{\R^n}(\nu-k)_+^{p-2}|\rho|^2\dx.
\end{equation} 
By (\ref{eq:mainequation}), we have
\begin{equation}\label{eq:weakcurvature}
\int_{\R^n}\nu\langle\D u,\D\varphi\rangle\dx=\int_{\R^n}\varphi\rho\dx,
\end{equation}
for any $\varphi\in C_c^1(\R^n)$. Choosing $\varphi=u\nu^{-1}(\nu-k)_+^p\in C_c^1(\R^n)$ in (\ref{eq:weakcurvature}), we have
\begin{equation}\label{eq:108}
\begin{aligned}
\int_{\R^n}(\nu-k)_+^p|\D u|^2\dx &= \int_{\R^n}u\nu^{-1}(\nu-k)_+^p\langle\D u,\D\nu\rangle\dx \\
&\quad -p\int_{\R^n}u(\nu-k)_+^{p-1}\langle\D u,\D\nu\rangle\dx \\
&\quad +\int_{\R^n}u\nu^{-1}(\nu-k)_+^p\rho\dx
\end{aligned}
\end{equation}
By (\ref{eq:108}), (\ref{eq:eigenvector}) and the Cauchy--Schwarz inequality with respect to the inner product $\langle A\cdot,\cdot\rangle$, we have
\begin{equation}\label{eq:10}
\begin{aligned}
\int_{\R^n}(\nu-k)_+^p|\D u|^2\dx\leq& p\int_{\R^n}|u|(\nu-k)_+^{p-1}|\D u|\sqrt{\langle A\D\nu,\D\nu\rangle}\dx \\
&+\int_{\R^n}|u|(\nu-k)_+^{p-1}|\rho|\dx
\end{aligned}
\end{equation}
By Lemma \ref{lm:supestimate}, we have $|u|\leq M$. We note that
\begin{equation}\label{eq:epsilonk}
\nu\geq k \quad \implies \quad |Du|^2\geq 1-\frac{1}{k^2}=:\epsilon_k.
\end{equation}
Using (\ref{eq:epsilonk}) and Cauchy's inequality in (\ref{eq:10}), we have
\begin{equation}\label{eq:11}
\begin{aligned}
\int_{\R^n}(\nu-k)_+^p\dx\leq& \frac{2p^2M^2}{\epsilon_k^2}\int_{\R^n}(\nu-k)_+^{p-2}\langle A\D\nu,\D\nu\rangle\dx \\
&+\frac{2M^2}{\epsilon_k^2}\int_{\R^n}(\nu-k)_+^{p-2}\rho^2\dx.
\end{aligned}
\end{equation}
By (\ref{eq:11}), (\ref{eq:pcaccioppoli}) and (\ref{eq:ellipticity}), we have
\begin{equation}\label{eq:13}
\int_{\R^n}(\nu-k)_+^p\dx\leq\frac{6p^2M^2}{\epsilon_k^2}\int_{\R^n}(\nu-k)_+^{p-2}\rho^2\dx.
\end{equation}
By H\"older's inequality, we have
\begin{equation}\label{eq:14}
\int_{\R^n}(\nu-k)_+^{p-2}\rho^2\dx\leq \Big(\int_{\R^n}|\rho|^p\dx\Big)^\frac{2}{p}\Big(\int_{\R^n}(\nu-k)_+^p\dx\Big)^\frac{p-2}{p}.
\end{equation}
By (\ref{eq:13}) and (\ref{eq:14}), we have
\begin{equation}
\int_{\R^n}(\nu-k)_+^p\dx\leq \frac{6p^2M^2}{\epsilon_k^2}\Big(\int_{\R^n}|\rho|^p\dx\Big)^\frac{2}{p}\Big(\int_{\R^n}(\nu-k)_+^p\dx\Big)^\frac{p-2}{p}.
\end{equation}
The claimed inequality (\ref{eq:pintegrability}) follows immediately.
\end{proof}

We combine the estimates of Theorem \ref{th:nuestimate} and Theorem \ref{th:integrability} to prove Theorem \ref{th:derivativeapriori}.

\begin{proof}[Proof of Theorem \ref{th:derivativeapriori}]
By Theorem \ref{th:integrability}, we have
\begin{equation*}
\Big(\int_{\R^n}(\nu-2)_+^p\dx\Big)^\frac{1}{p}\leq CM\Big(\int_{\R^n}|\rho|^p\dx\Big)^\frac{1}{p},
\end{equation*}
where $C=C(n,p)>0$ and $M=M(n,\Vert\rho\Vert_\frac{2n}{n+2})>0$. Let $x_0\in \R^n$. By triangle inequality, it follows that
\begin{equation}\label{eq:202}
\Big(\intbar_{B(x_0,1)}\nu^p\dx\Big)^\frac{1}{p}\leq \Big(\intbar_{B(x_0,1)}\big((\nu-2)_++2\big)^p\dx\Big)^\frac{1}{p}\leq CM\Big(\int_{\R^n}|\rho|^p\dx\Big)^\frac{1}{p}+2
\end{equation}
By Theorem \ref{th:nuestimate} and (\ref{eq:202}) we have
\begin{equation*}
\sup_{\R^n}\nu\leq C,
\end{equation*}
where $C=C(n,p,\Vert\rho\Vert_{\mathcal{X}^*},\Vert\rho\Vert_p)>0$.
\end{proof}


\section{Proof of Theorem \ref{th:main}}\label{section:proof}
In this section we prove Theorem \ref{th:main} using Leray--Schauder fixed point theorem and the \textit{a priori} derivative estimate of Theorem \ref{th:derivativeapriori}. The method is standard in the theory of quasilinear equations on bounded domains, see for example \cite[Chapter 11]{GT}. We follow the main ideas of \cite[Theorem 3.6]{BS} where the method was applied to Lorentz mean curvature equation on bounded domains. Our treatment of the problem in the unbounded domain $\R^n$ is an adaptation of the standard method. We replace boundary conditions with suitable decay estimates which allows us to recover compactness of the fixed point mapping despite the unboundedness of the domain. In addition, we use Calderon-Zygmund estimates and Sobolev embedding theorem to obtain global control over the oscillations of the solutions. Schauder estimates can be used locally as in the case of bounded domains once the oscillations of solutions are bounded globally.

The following lemma is a restatement of basic results for non-divergence form equations in our setting.

\begin{lemma}\label{lm:existence}
Let $\alpha\in(0,1]$, $p>n$, $R>0$, and $\theta>0$. Let $v\in C^{1,\alpha}(\R^n)$ satisfy $\Vert\D v\Vert_\infty\leq 1-\theta$. Furthermore, suppose $\Vert \D v\Vert_{\infty,\R^n\setminus \overline{B(0,R)}}\leq \epsilon$, where $\epsilon=\epsilon(n,p)>0$ is such that
\begin{equation}\label{eq:103}
\sup_{\xi\in B(0,\epsilon)}| \D^2F(\xi)-I|\leq \epsilon_0,
\end{equation}
where $\epsilon_0=\epsilon_0(n,p)>0$ is given by Theorem \ref{th:CZperturbation}. Suppose $\mu\in L^\frac{2n}{n+2}(\R^n)\cap L^p(\R^n)\cap C^{0,1}(\R^n)$. Then there exists a unique solution  $u\in C^{2,\alpha}(\R^n)$ of the linear equation
\begin{equation}\label{eq:auxpde}
Q_vu:=-\langle\D^2F(\D v),\D^2 u\rangle = \mu,
\end{equation}
satisfying $u(x)\rightarrow 0$, as $|x|\rightarrow 0$. Furthermore, we have
\begin{equation}
\Vert u\Vert_{C^{2,\alpha}(\R^n)}\leq C,
\end{equation}
where $C=C(n,p,\theta,\alpha,R,\Vert v\Vert_{C^{1,\alpha}(\R^n)},\Vert \mu\Vert_\frac{2n}{n+2},\Vert\mu\Vert_p,\Vert \mu\Vert_{C^{0,1}(\R^n)})>0$.

\end{lemma}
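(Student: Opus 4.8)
The plan is to read (\ref{eq:auxpde}) as a uniformly elliptic, non-divergence form \emph{linear} equation whose coefficient matrix is H\"older continuous on all of $\R^n$ and differs from the identity by at most $\epsilon_0$ outside $B(0,R)$, and then to assemble the conclusion from the results of Section \ref{section:Preliminaries}. First I would verify the structural hypotheses for the coefficient matrix $A(x):=\D^2F(\D v(x))$. Since
\begin{equation*}
\D^2F(\xi)=\frac{1}{\sqrt{1-|\xi|^2}}\,I+\frac{\xi\otimes\xi}{(1-|\xi|^2)^{3/2}},
\end{equation*}
$A$ is symmetric, and because $|\D v|\le 1-\theta$ its eigenvalues, which are $(1-|\xi|^2)^{-1/2}$ and $(1-|\xi|^2)^{-3/2}$, obey $I\le A\le\Lambda I$ with $\Lambda=\Lambda(\theta)$. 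As $\D^2F$ is Lipschitz on the compact ball $\overline{B(0,1-\theta)}$ and $\D v\in C^{0,\alpha}(\R^n)$, the composition $A$ lies in $C^{0,\alpha}(\R^n)$ with $\Vert A\Vert_{C^{0,\alpha}(\R^n)}\le C(\theta,\Vert v\Vert_{C^{1,\alpha}(\R^n)})$; in particular $A$ is uniformly continuous with a modulus $\omega$ determined by these quantities. Finally the hypotheses $\Vert\D v\Vert_{\infty,\R^n\setminus \overline{B(0,R)}}\le\epsilon$ and (\ref{eq:103}) give $\Vert A-I\Vert_{\infty,\R^n\setminus \overline{B(0,R)}}\le\epsilon_0$, i.e. assumption (\ref{eq:infty}) holds with $A_0=I$; here $\epsilon_0=\epsilon_0(n,p)$ is to be taken as the smaller of the two perturbation constants of Theorem \ref{th:CZperturbation} for the exponents $q=\tfrac{2n}{n+2}$ and $q=p$ (with $\lambda=1$), which is exactly what makes the next step work.

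Since $n\ge 3$ we have $\tfrac{2n}{n+2}\in(1,\tfrac n2)$, and $\mu\in L^{2n/(n+2)}(\R^n)\cap L^p(\R^n)$ with $p>n$, so Theorem \ref{th:existencenondivergenceintro} applied to $\langle A,\D^2 u\rangle=-\mu$ yields a unique $u\in D_0^{2,2n/(n+2)}(\R^n)\cap D_0^{2,p}(\R^n)$ with
\begin{equation*}
\Vert\D^2 u\Vert_{2n/(n+2)}+\Vert\D^2 u\Vert_{p}\le C\big(\Vert\mu\Vert_{2n/(n+2)}+\Vert\mu\Vert_{p}\big).
\end{equation*}
As recorded in the proof of that theorem, $D_0^{2,2n/(n+2)}(\R^n)\cap D_0^{2,p}(\R^n)\subset W^{2,p}(\R^n)$ with a controlled norm, so (since $p>n$) Morrey's embedding gives that $u$ is bounded, $u\in C^{1,1-n/p}(\R^n)$, and $u(x)\to 0$ as $|x|\to\infty$, with $\Vert u\Vert_\infty\le C\big(\Vert\mu\Vert_{2n/(n+2)}+\Vert\mu\Vert_{p}\big)$.

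To upgrade $u$ to $C^{2,\alpha}$ I would argue locally. Fix $x\in\R^n$ and let $w$ be the solution of the Dirichlet problem $\langle A,\D^2 w\rangle=-\mu$ in $B(x,1)$, $w=u$ on $\partial B(x,1)$, furnished by Theorem \ref{th:Schauderestimates} (using $A,\mu\in C^{0,\alpha}(B(x,1))$ and $u|_{\partial B(x,1)}\in C(\partial B(x,1))$). Then $u-w\in W^{2,p}(B(x,1))\cap C(\overline{B(x,1)})$ vanishes on $\partial B(x,1)$ and solves the homogeneous equation, so the Alexandrov maximum principle (\cite[Theorem 9.1]{GT}) forces $u=w$; hence $u\in C^{2,\alpha}_{loc}(\R^n)$ and the interior Schauder estimate gives
\begin{equation*}
\Vert u\Vert_{C^{2,\alpha}(B(x,1/2))}\le C\big(\Vert u\Vert_{\infty,B(x,1)}+\Vert\mu\Vert_{C^{0,\alpha}(B(x,1))}\big)\le C\big(\Vert u\Vert_\infty+\Vert\mu\Vert_{C^{0,1}(\R^n)}\big),
\end{equation*}
with $C$ independent of $x$ because the ellipticity constants and $\Vert A\Vert_{C^{0,\alpha}(B(x,1))}\le\Vert A\Vert_{C^{0,\alpha}(\R^n)}$ are uniform. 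Taking the supremum over $x\in\R^n$ yields $u\in C^{2,\alpha}(\R^n)$ with the advertised dependence of the constant. Uniqueness within $C^{2,\alpha}(\R^n)$ among solutions vanishing at infinity follows once more from the maximum principle: the difference $w$ of two such solutions is bounded, satisfies $\langle A,\D^2 w\rangle=0$ and $w(x)\to 0$, so applying the Alexandrov estimate on $B(0,\rho)$ gives $\sup_{B(0,\rho)}|w|\le\sup_{\partial B(0,\rho)}|w|$, and letting $\rho\to\infty$ gives $w\equiv 0$.

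The statement is explicitly a repackaging of classical non-divergence theory, so the difficulty is bookkeeping rather than analytic depth: the main point to watch is that the perturbative smallness of $A$ at infinity — needed to invoke Theorems \ref{th:CZperturbation} and \ref{th:existencenondivergenceintro} on the \emph{unbounded} domain — is precisely encoded by the hypotheses on $\Vert\D v\Vert_{\infty,\R^n\setminus\overline{B(0,R)}}$ together with (\ref{eq:103}), and that the interior Schauder constant is uniform in the base point so that the local $C^{2,\alpha}$ estimates patch into a global one.
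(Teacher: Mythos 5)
Your proposal is correct and follows essentially the same route as the paper: verify symmetry, uniform ellipticity and the $C^{0,\alpha}$ bound for $\D^2F(\D v)$, note that (\ref{eq:103}) supplies the near-constant-coefficient condition (\ref{eq:infty}) at infinity, invoke Theorem \ref{th:existencenondivergenceintro} for existence, uniqueness and the Calderon--Zygmund bound, pass through Morrey's inequality to an $L^\infty$ bound, and finish with the Schauder estimates of Theorem \ref{th:Schauderestimates}. The only difference is that you spell out details the paper leaves implicit (the identification of the global solution with the local Dirichlet solution via the Alexandrov maximum principle, and the uniqueness of bounded decaying solutions), which is harmless.
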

\begin{proof}
Since $|\D v|\leq 1-\theta$ in $\R^n$, we have
\begin{equation}\label{eq:101}
\big(1-(1-\theta)^2\big)^{-\frac{1}{2}} I\leq \D^2F(\D v)\leq \big(1-(1-\theta)^2\big)^{-\frac{3}{2}} I \quad\text{in }\R^n,
\end{equation}
The mapping $\D^2F$ is Lipschitz continuous and bounded on $B(0,1-\theta)$. Since $\D v\in C^{0,\alpha}(\R^n)$ and $\D v$ takes values in $B(0,1-\theta)$, it follows that $\D^2F(\D v)\in C^{0,\alpha}(\R^n)$ and
\begin{equation}\label{eq:102}
\Vert\D^2F(\D v)\Vert_{C^{0,\alpha}(\R^n)}\leq C,
\end{equation}
where $C=C(\theta, \Vert v\Vert_{C^{1,\alpha}(\R^n)})>0$. 

By (\ref{eq:101}), (\ref{eq:102}) and (\ref{eq:103}) the assumptions of Theorem \ref{th:existencenondivergenceintro} and Theorem \ref{th:Schauderestimates} are satisfied. Hence there exists a unique solution to (\ref{eq:auxpde}) that satisfies 
\begin{equation}\label{eq:104}
\Vert \D^2 u\Vert_{\frac{2n}{n+2}}+\Vert\D^2 u\Vert_{p}\leq C(\Vert \mu\Vert_{\frac{2n}{n+2}}+\Vert\mu\Vert_{p}),
\end{equation}
where $C=C(n,p,\theta,\Vert v\Vert_{C^{1,\alpha}(\R^n)},R)>0$. Note that Morrey's inequality and (\ref{eq:104}) give an $L^\infty$ estimate for $u$. Then, by Theorem \ref{th:Schauderestimates},we have $u\in C^{2,\alpha}(\R^n)$ and $u$ satisfies
\begin{equation}\label{eq:105}
\Vert u\Vert_{C^{2,\alpha}(\R^n)}\leq C,
\end{equation}
where $C=C(n,\alpha,\theta,R,\Vert v\Vert_{C^{1,\alpha}(\R^n)},\Vert \mu\Vert_\frac{2n}{n+2},\Vert\mu\Vert_p,\Vert \mu\Vert_{C^{0,1}(\R^n)})>0$.

\end{proof}

Now we are ready to apply Schauder fixed point theorem to find solutions for smooth and compactly supported data. The proof essentially follows \cite[Section 11.2]{GT} with some notable differences. The proof is divided into to three steps, the first of which is unique to the problem in unbounded domain. In the first step we prove a uniform decay of all "nice" solutions corresponding to a certain one-parameter family of data. In the second step decay estimates have to be used to to prove the continuity and the compactness of the fixed point mapping. Furthermore, the first constraint in (\ref{870}) is specific to the Lorentz mean curvature operator and the second one for the unbounded domain. Otherwise the second and the third step follw the ideas presented in \cite[Section 11.2]{GT}.

\begin{theorem}\label{th:smoothexistence}
Let $p>n$ and suppose $\rho\in C_c^\infty(\R^n)$. Then there exist $\alpha=\alpha(n,p,\Vert\rho\Vert_{\mathcal{X}^*},\Vert\rho\Vert_p)\in (0,1]$ and $u\in C^{1,\alpha}(\R^n)$ that solves (\ref{eq:mainequation}) and satisfies
\begin{equation*}
\Vert u\Vert_{C^{1,\alpha}(\R^n)}\leq K,
\end{equation*}
where $K=K(n,p,\Vert\rho\Vert_{\mathcal{X}^*},\Vert\rho\Vert_p)>0$. Moreover, we have
\begin{equation*}
\Vert\D u\Vert_\infty\leq 1-\theta,
\end{equation*}
where $\theta=\theta(n,p,\Vert\rho\Vert_{\mathcal{X}^*},\Vert\rho\Vert_p)>0$.
\end{theorem}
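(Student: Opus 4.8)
The plan is to apply the Schauder fixed point theorem (Theorem \ref{th:fixedpoint}) to a mapping $T$ that, given $v$ in a suitable closed convex set $\mathcal{D}\subset C^{1,\alpha}(\R^n)$, produces the solution $u=Tv$ of the \emph{frozen-coefficient} linear equation $Q_v u=\rho$ supplied by Lemma \ref{lm:existence}. A fixed point of $T$ is then a $C^{2,\alpha}$ solution of the quasilinear equation (\ref{eq:mainequation}). To make this work I first need the \textit{a priori} estimates of Section \ref{section:Apriori} — but those are stated for classical solutions of the \emph{nonlinear} equation, so the standard trick is to run the fixed point argument with a homotopy parameter $\sigma\in[0,1]$, replacing $\rho$ by $\sigma\rho$, and to use the Leray--Schauder alternative: either a fixed point exists at $\sigma=1$, or the set of fixed points $\{u_\sigma\}$ over all $\sigma$ is unbounded in $C^{1,\alpha}$. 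The \textit{a priori} bounds (Theorem \ref{th:derivativeapriori}, Corollary \ref{cor:aprioriholder}, Lemma \ref{lm:supestimate}) applied to $\sigma\rho$, whose norms are dominated by those of $\rho$, rule out the second alternative and give uniform constants $\theta,\alpha,K$ depending only on $n,p,\Vert\rho\Vert_{\mathcal{X}^*},\Vert\rho\Vert_p$.

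The three-step structure flagged before the statement suggests the following order. \textbf{Step 1: a uniform decay estimate.} For the mapping to land in a fixed bounded set and to be compact on the unbounded domain $\R^n$, I need that every relevant solution $u_\sigma$ decays at infinity at a fixed rate $|u_\sigma(x)|\le C|x|^{-\gamma}$. This comes from Lemma \ref{lm:decay} applied outside a large ball: once $|\D u_\sigma|$ is small there (which follows from the $D_0^{1,2}$ bound of Lemma \ref{lm:derivativeL2estimate} together with the $C^{1,\alpha}$ bound, since $\D u_\sigma\to 0$ at infinity), the coefficient matrix $\D^2F(\D u_\sigma)$ is within $\epsilon$ of $I$, so $\lambda/\Lambda$ is close to $1$ and the exponent $\alpha=\frac{\lambda}{\Lambda}(n-1)-1>0$ since $n\ge 3$. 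Combining this with Lemma \ref{lm:derivativeinterpolation} and the second-derivative bound from Lemma \ref{lm:existence} gives a uniform decay of $\D u_\sigma$ as well. \textbf{Step 2: $T$ is well-defined, continuous, and compact on $\mathcal{D}$.} I would take $\mathcal{D}$ to be the set of $v\in C^{1,\alpha}(\R^n)$ with $\Vert v\Vert_{C^{1,\alpha}}\le K$, $\Vert\D v\Vert_\infty\le 1-\theta$, $\Vert\D v\Vert_{\infty,\R^n\setminus\overline{B(0,R)}}\le\epsilon$ and the decay bound from Step 1 — this is closed and convex. Lemma \ref{lm:existence} shows $T$ maps $\mathcal{D}$ into $C^{2,\alpha}(\R^n)$ with a uniform norm bound; the decay estimates plus Arzelà--Ascoli on exhausting balls (the $C^{2,\alpha}$ bound controls oscillation locally, the decay controls the tail) give precompactness in $C^{1,\alpha}$; continuity follows because $v\mapsto\D^2F(\D v)$ is continuous into $C^{0,\alpha}$ and solutions of linear non-divergence equations depend continuously on coefficients via the Schauder/Calderón--Zygmund estimates. \textbf{Step 3: close the loop.} The Leray--Schauder fixed-point theorem (in the homotopy form) yields $u=Tu$ for $\sigma=1$; bootstrapping with Corollary \ref{cor:aprioriholder} (which now applies since $u$ is a genuine $C^2$ solution with $|\D u|<1$) upgrades and confirms the $C^{1,\alpha}$ bound with the claimed dependence, and Theorem \ref{th:derivativeapriori} gives $\Vert\D u\Vert_\infty\le 1-\theta$.

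The main obstacle is verifying that $T$ actually maps $\mathcal{D}$ \emph{into itself} — i.e., that the output $u=Tv$ again satisfies $\Vert\D u\Vert_\infty\le 1-\theta$, the smallness of $\D u$ near infinity, and the decay bound, with the \emph{same} constants. The $C^{1,\alpha}$-norm and decay parts follow from Step 1 and Lemma \ref{lm:existence}, but the pointwise bound $\Vert\D u\Vert_\infty\le 1-\theta$ is delicate because the \textit{a priori} estimate of Theorem \ref{th:derivativeapriori} is proved only for solutions of the \emph{nonlinear} equation, not for $u=Tv$ with $v\neq u$. This is exactly why the homotopy/continuity-method formulation is essential: one does not ask $T$ to preserve the gradient bound pointwise on all of $\mathcal{D}$, but rather sets up the fixed-point problem so that the \textit{a priori} bound is invoked only at actual fixed points $u_\sigma=T_\sigma u_\sigma$, which \emph{are} solutions of $-\div(\D u_\sigma/\sqrt{1-|\D u_\sigma|^2})=\sigma\rho$. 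Making the bookkeeping consistent — choosing $K$, $\theta$, $R$, $\epsilon$, $\alpha$ in the right order so that the fixed-point set is a priori contained in the interior of $\mathcal{D}$ with respect to the constraints that could fail — is the technical heart of the argument, and is the step I expect to require the most care.
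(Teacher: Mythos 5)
Your overall strategy --- a homotopy $\sigma\rho$, a frozen-coefficient solution operator from Lemma \ref{lm:existence}, a Schauder-type fixed point argument, and the \textit{a priori} estimates of Section \ref{section:Apriori} invoked only at genuine fixed points --- is the same as the paper's. But there is a concrete gap in your Step 1, which is exactly the step the paper identifies as the one specific to the unbounded domain. You need a single radius $R$, valid for \emph{all} parameters $\sigma$ simultaneously, with $|\D u_\sigma|\le\epsilon$ outside $B(0,R)$: this is what fixes the set $\mathcal{D}$ and makes Lemma \ref{lm:existence} usable, since its Calder\'on--Zygmund input requires the coefficients to be an $\epsilon$-perturbation of a constant matrix outside a \emph{fixed} ball. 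Your justification (uniform $D_0^{1,2}$ and $C^{1,\alpha}$ bounds plus ``$\D u_\sigma\to0$ at infinity'') does not deliver uniformity: these norms are translation invariant, so a family obtained by translating a fixed profile off to infinity satisfies all of them with no common decay radius. Each individual $u_\sigma$ gives a radius $R_{\epsilon,\sigma}$, but nothing in your argument controls $\sup_\sigma R_{\epsilon,\sigma}$. Your alternative route via Lemma \ref{lm:decay} is circular: its exponent $\tfrac{\lambda}{\Lambda}(n-1)-1$ is positive only where the ellipticity ratio of $\D^2F(\D u_\sigma)$ is already close to $1$, i.e.\ only where $|\D u_\sigma|$ is already known to be small, which is what you are trying to prove (the bound $|\D u_\sigma|\le1-\theta$ alone does not suffice for small $\theta$ and $n=3$). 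The paper closes this by testing the difference of the equations for $u_{\sigma_1},u_{\sigma_2}$ with $u_{\sigma_2}-u_{\sigma_1}$ and using monotonicity of $\D F$ to get $\Vert\D u_{\sigma_2}-\D u_{\sigma_1}\Vert_2\le|\sigma_2-\sigma_1|\,\Vert\rho\Vert_{\mathcal{X}^*}$; interpolating against the uniform $C^{0,\alpha}$ bound makes $\sigma\mapsto\D u_\sigma$ continuous into $L^\infty$, and a finite cover of the compact parameter interval then yields a uniform $R_\epsilon$. (Lemma \ref{lm:decay} and Lemma \ref{lm:derivativeinterpolation} are used only later, for $T(v_k)$ with $v_k\in\mathcal{D}$, where the needed smallness of the coefficients at infinity is built into $\mathcal{D}$.)

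On the self-mapping issue you rightly call the technical heart but leave open: the Leray--Schauder alternative in its textbook form is not directly available, since $T$ is defined only on $\mathcal{D}$ (ellipticity degenerates as $\Vert\D v\Vert_\infty\to1$). The paper instead applies the plain Schauder theorem to the truncated map $\tilde T(v)=\tau(v)T(v)$ with $\tau(v)=\min\bigl\{1,\ (1-\tfrac12\theta)/\Vert\D T(v)\Vert_\infty,\ 2K/\Vert T(v)\Vert_{C^{1,\alpha}(\R^n)},\ \epsilon/\Vert\D T(v)\Vert_{\infty,\R^n\setminus\overline{B(0,R)}}\bigr\}$, which manifestly maps $\tilde{\mathcal{D}}$ into itself. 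At a fixed point, $u$ solves the nonlinear equation with data $\tau(u)\rho$, so the \textit{a priori} estimates give the \emph{strict} improvements $\Vert\D u\Vert_\infty\le1-\theta<1-\tfrac12\theta$, $\Vert u\Vert_{C^{1,\alpha}(\R^n)}\le K<2K$ and $\Vert\D u\Vert_{\infty,\R^n\setminus\overline{B(0,R)}}\le\epsilon/2<\epsilon$, which force $\tau(u)=1$. With that device and the repaired Step 1, your outline matches the paper's proof.
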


\begin{proof}
Let $S$ be the set of parameters $\tau\in[0,1]$ such that there exists $u_\tau\in D_0^{1,2}(\R^n)\cap C_b^2(\R^n)$ satisfying $|\D u|< 1$ in $\R^n$, and
\begin{equation}\label{eq:81}
-\div\Big({\frac{\D u_\tau}{\sqrt{1-|\D u_\tau|^2}}}\Big)=\tau\rho \quad\text{in }\R^n.
\end{equation}
Note that $0\in S$, since obviously $u_0\equiv 0$ is a solution. Hence we have $S\not=\emptyset$. By Theorem \ref{th:derivativeapriori} there exists $\theta=\theta(n,p,\Vert \rho\Vert_p,\Vert \rho\Vert_{\mathcal{X}^*})>0$ such that
\begin{equation}\label{eq:88}
\Vert\D u_\tau\Vert_\infty\leq 1-\theta,
\end{equation}
for each $\tau\in S$. By Corollary \ref{cor:aprioriholder}, Lemma \ref{lm:derivativeL2estimate}, and Lemma \ref{lm:supestimate}, there exists $\alpha=\alpha(n,p,\theta)$ and $K=K(n,p,\theta,\Vert\rho\Vert_{\mathcal{X}^*},\Vert\rho\Vert_p)>0$, such that
\begin{equation}\label{eq:106}
\Vert u_\tau\Vert_{C^{1,\alpha}(\R^n)}+\Vert u_\tau\Vert_{D_0^{1,2}(\R^n)}\leq K,
\end{equation}
for each $\tau\in S$.

\textit{Step 1. } We show that the derivatives $\D u_\tau$ have a uniform decay rate at infinity for all $\tau\in S$. That is, we prove that
\begin{equation}\label{23}
\omega_\infty(R):=\sup\{|\D u_\tau(x)|:\tau\in S,x\in\R^n\setminus \overline{B(0,R)}\}\rightarrow 0,
\end{equation}
as $R\rightarrow \infty$. We first prove two simple claims.

\textit{Claim.} Let $\tau_0\in S$ and $\epsilon>0$. There exists $R_{\epsilon,\tau_0}>0$ such that 
\begin{equation}\label{21}
\Vert\D u_{\tau_0}\Vert_{\infty,\R^n\setminus{\overline {B(0,R_{\epsilon,\tau_0})}}}\leq \epsilon.
\end{equation}

Since $u_{\tau_0}\in D_0^{1,2}(\R^n)\cap C^{1,\alpha}(\R^n)$, we have $\lim_{|x|\rightarrow\infty}\D u_{\tau_0}(x)=0$. This proves the claim.

\textit{Claim.} Let $\epsilon>0$. There exists $\delta_\epsilon>0$ such that
\begin{equation}\label{20}
\Vert \D u_{\tau_2}-\D u_{\tau_1}\Vert_\infty\leq \epsilon,
\end{equation}
for all $\tau_1,\tau_2\in S$ such that $|\tau_2-\tau_1|\leq\delta_\epsilon$. 

Suppose $\tau_1,\tau_2\in S$. Testing the equation (\ref{eq:81}) with $u_{\tau_2}-u_{\tau_1}$, we have
\begin{equation}\label{eq:122}
\int_{\R^n}\langle \D F(\D u_{\tau_2})-\D F(\D u_{\tau_1}),\D u_{\tau_2}-\D u_{\tau_1}\rangle\dx=(\tau_2-\tau_1)\int_{\R^n}\rho(u_{\tau_2}-u_{\tau_1})\dx.
\end{equation}
The vector field $\D F$ satisfies the monotonicity estimate
\begin{equation}\label{eq:123}
\int_{\R^n}\langle \D F(\D u_{\tau_2})-\D F(\D u_{\tau_1}),\D u_{\tau_2}-\D u_{\tau_1}\rangle\dx\geq \int_{\R^n}|\D u_{\tau_2}-\D u_{\tau_1}|^2\dx.
\end{equation}
We have
\begin{equation}\label{eq:124}
\big|\int_{\R^n}\rho(u_{\tau_2}-u_{\tau_1})\dx\big| \leq \Vert \rho\Vert_{\mathcal{X}^*}\Vert \D u_{\tau_1}-\D u_{\tau_2}\Vert_2
\end{equation}
By (\ref{eq:122}), (\ref{eq:123}) and (\ref{eq:124}) we have
\begin{equation}\label{eq:87}
\Vert \D u_{\tau_2}-\D u_{\tau_1}\Vert_2\leq |\tau_1-\tau_2|\Vert \rho\Vert_{\mathcal{X}^*}.
\end{equation}
By (\ref{eq:106}) the functions $\D u_\tau$, $\tau\in S$ are uniformly bounded in $C^{0,\alpha}(\R^n)$ and hence (\ref{eq:87}) implies (\ref{20}) for $\delta_\epsilon>0$ small enough. This finishes the proof of the claim.

Now we prove (\ref{23}) by a simple covering argument. Let $\epsilon>0$. Let $\delta_\epsilon>0$ be as in (\ref{20}). Since the closure of $S$ is compact and $\bar{S}\subset\bigcup_{\tau\in S}(\tau-\delta_\epsilon,\tau+\delta_\epsilon)$, there exists $N_\epsilon\in\N$ and a finite sequence $(\tau_i)_{i=1}^{N_\epsilon}\subset S$ such that the open intervals $(\tau_i-\delta_{\epsilon},\tau_i+\delta_{\epsilon})$, $i=1,\dots,N_\epsilon$, cover $S$. Define 
\begin{equation*}R_\epsilon=\max_{1\leq i\leq N_\epsilon}R_{\epsilon,\tau_i},
\end{equation*}
where $R_{\epsilon,\tau_i}>0$ is as in (\ref{21}), for each $i=1,\dots,N_\epsilon$.

Let $\tau\in S$. There exists $1\leq i\leq N_\epsilon$ such that $|\tau-\tau_i|<\delta_\epsilon$. By (\ref{20}) and (\ref{21}), we have
\begin{equation*}
\Vert\D u_\tau\Vert_{\infty,\R^n\setminus \overline{B(0,R_\epsilon)}}\leq \Vert\D u_\tau-\D u_{\tau_i}\Vert_{\infty,\R^n\setminus \overline{B(0,R_\epsilon)}}+\Vert\D u_{\tau_i}\Vert_{\infty,\R^n\setminus \overline{B(0,R_\epsilon)}}\leq 2\epsilon.
\end{equation*}
Thus we have $\omega_\infty(R_\epsilon)\leq 2\epsilon$. We have proven that $\omega_\infty(R)\rightarrow 0$ as $R\rightarrow\infty$. This finishes step 1.

\textit{Step 2.} In this step we define a mapping $\tilde{T}$ and show that it satisfies the assumptions of Schauder fixed point theorem (Theorem \ref{th:fixedpoint}).

Let $R>0$ be chosen such that 
\begin{equation}\label{872}
\omega_\infty(R)\leq \epsilon/2,
\end{equation}
where $\omega_\infty$ is defined in (\ref{23}) and $\epsilon=\epsilon(n,p)>0$ is given in Lemma \ref{lm:existence}. Define
\begin{equation}\label{870}
\mathcal{D}=\big\{ v\in C^{1,\alpha}(\R^n):\Vert\D v\Vert_\infty\leq 1-\frac{1}{2}\theta,\Vert \D v\Vert_{\infty, \R^n\setminus \overline{B(0,R)}}\leq \epsilon\big\},
\end{equation}
where $\theta>0$ is given in (\ref{eq:88}). The set $\mathcal{D}$ is always considered as a subset of the normed space $C^{1,\alpha}(\R^n)$ with the induced topology. For each $v\in \mathcal{D}$ we define $T(v)$ to be the unique solution $u\in C^{2,\alpha\beta}(\R^n)$ to the equation
\begin{equation}
Q_vu=\rho,
\end{equation}
given by Lemma \ref{lm:existence}. 

\textit{Claim.} The mapping $T:\mathcal{D}\rightarrow C^{1,\alpha}(\R^n)$ is compact, that is, it maps bounded sets to precompact sets. 

Let $(v_k)_{k=1}^\infty\subset \mathcal{D}$ be a bounded sequence. By Lemma \ref{lm:existence}, the mapping $T:\mathcal{D}\rightarrow C^{2,\alpha}(\R^n)$ maps bounded sets to bounded sets. Hence, the sequence $(T(v_k))_{k=1}^\infty$ is bounded in $C^{2,\alpha}(\R^n)$ and consequently also in $C_b^2(\R^n)$. Moreover, by the decay estimate of Lemma \ref{lm:decay} and the interpolation inequality of Lemma \ref{lm:derivativeinterpolation} there exists a decreasing function $\gamma:[0,\infty)\rightarrow [0,\infty)$, independent of $k$, such that
\begin{equation}\label{75}
\Vert T(v_k)\Vert_{\infty,\R^n\setminus \overline{B(0,R)}}+\Vert \D T(v_k)\Vert_{\infty,\R^n\setminus \overline{B(0,R)}}\leq \gamma(R)\xrightarrow{R\rightarrow\infty}0,
\end{equation}
for all $R>0$ and all $k\in\N$. By Arzela-Ascoli theorem there exists $u\in C^{1,\alpha}(\R^n)$ such that, up to a subsequence, $T(v_k)\rightarrow u$ and $\D T(v_k)\rightarrow \D u$ uniformly on compact subsets of $\R^n$ as $k\rightarrow\infty$. By (\ref{75}), it holds that $T(v_k)\rightarrow u$ in $C_b^1(\R^n)$ as $k\rightarrow\infty$. Since the sequence $(T(v_k))_{k=1}^\infty$ is bounded in $C_b^2(\R^n)$ and converges in $C_b^1(\R^n)$, we conclude, by interpolation, that $T(v_k)\rightarrow u$ in $C^{1,\alpha}(\R^n)$ as $k\rightarrow\infty$. We have proven that the mapping $T:\mathcal{D}\rightarrow C^{1,\alpha}(\R^n)$ is compact. 

\textit{Claim.} The mapping $T:\mathcal{D}\rightarrow C^{1,\alpha}(\R^n)$ is continuous. 

Let $(v_k)_{k=1}^\infty\subset \mathcal{D}$ be a sequence such that $v_k\rightarrow v\in \mathcal{D}$ in norm as $k\rightarrow\infty$. We have
\begin{equation}\label{50}
\begin{aligned}
Q_{v}(T(v)-T(v_k)) &=Q_{v}T(v)-Q_{v_k}T(v_k)+(Q_{v_k}-Q_{v})T(v_k) \\
&=-\langle \D^2 F(\D v_k)-\D^2 F(\D v),\D^2 T(v_k)\rangle=:f_k.
\end{aligned}
\end{equation}
By Lemma \ref{lm:existence}, the sequence $(\D^2 T(v_k))_{k=1}^\infty$ is bounded in $L^\frac{2n}{n+2}(\R^n)\cap L^p(\R^n)$. Since $\D^2 F$ is Lipschitz continuous in $B(0,1-\theta)$ and $\D v_k\rightarrow \D v$ uniformly in $\R^n$ as $k\rightarrow\infty$, we have $\D^2F(\D v_k)\rightarrow \D^2F(\D v)$ uniformly in $\R^n$ as $k\rightarrow\infty$. It follows that
\begin{equation}\label{51}
f_k:= -\langle \D^2 F(\D v_k)-\D^2 F(\D v),\D^2 T(v_k)\rangle\xrightarrow{k\rightarrow\infty} 0, \quad\text{in }L^\frac{2n}{n+2}(\R^n)\cap L^p(\R^n).
\end{equation}
By (\ref{50}), (\ref{51}), and the Calderon-Zygmund estimate (\ref{eq:104}), we have
\begin{equation}\label{eq:125}
\D^2T(v_k)\xrightarrow{k\rightarrow\infty} \D^2T(v) \quad\text{in } L^\frac{2n}{n+2}(\R^n)\cap L^p(\R^n).
\end{equation}
By (\ref{eq:125}) and the fact that $(T(v_k))_{k=1}^\infty$ is bounded in $C^{2,\alpha}(\R^n)$, we have
\begin{equation}\label{eq:127}
T(v_k)\xrightarrow{k\rightarrow\infty} T(v) \quad\text{in }C_b^2(\R^n).
\end{equation}
By (\ref{eq:127}), we have
\begin{equation*}
T(v_k)\xrightarrow{k\rightarrow\infty} T(v) \quad\text{in }C^{1,\alpha}(\R^n).
\end{equation*}
Hence, the mapping $T:\mathcal{D}\rightarrow C^{1,\alpha}(\R^n)$ is continuous. This finishes the proof of the claim.

We define
\begin{equation}
\tilde{\mathcal{D}}=\mathcal{D}\cap\big\{v\in C^{1,\alpha}(\R^n):\Vert v\Vert_{C^{1,\alpha}(\R^n)}\leq 2K\big\},
\end{equation}
where $K>0$ is given in (\ref{eq:106}). Again, the set $\tilde{\mathcal{D}}$ is always considered as a subset of the normed space $C^{1,\alpha}(\R^n)$ with the induced topology. We define the mapping $\tilde{T}:\tilde{\mathcal{D}}\mapsto C^{1,\alpha}(\R^n)$ by
\begin{equation}
\tilde{T}(v)=\tau(v)T(v),
\end{equation}
where
\begin{equation}
\tau(v):=\min\Big\{1,\frac{1-\frac{1}{2}\theta}{\Vert \D T(v)\Vert_\infty},\frac{2K}{\Vert T(v)\Vert_{C^{1,\alpha}(\R^n)}},\frac{\epsilon}{\Vert DT(v)\Vert_{\infty,\R^n\setminus \overline{B(0,R)}}}\Big\}.
\end{equation}
By the definition of $\tilde{T}$ it is obvious that $\tilde{T}(\tilde{\mathcal{D}})\subset \tilde{\mathcal{D}}$. We note that $\tau:\mathcal{D}\rightarrow [0,1]$ is clearly continuous. Since $T$ is compact and continuous and $\tau$ is continuous and bounded, the mapping $\tilde{T}$ is also compact and continuous. We also note that $\tilde{\mathcal{D}}$ is a bounded, closed and convex subset of the Banach space $C^{1,\alpha}(\R^n)$. Hence $\tilde{T}$ satisfies the assumptions of the Schauder fixed point teorem (Theorem \ref{th:fixedpoint}).

\textit{Step 3.} By Schauder fixed point theorem (Theorem \ref{th:fixedpoint}) the mapping $\tilde{T}$ has a fixed point $u\in \tilde{\mathcal{D}}$. In this step we prove that $u$ is actually a fixed point of $T$ and we conclude the proof of Theorem \ref{th:smoothexistence}.

We have $u=\tilde{T}(u)=\tau(u)T(u)\in C^{2,\alpha}(\R^n)$. Hence $u$ solves the equation (\ref{eq:81}) with $\tau=\tau(u)$. Since $u\in\tilde{\mathcal{D}}$, we have $\Vert\D u\Vert_\infty\leq 1-\theta/2<1$. Moreover, by Calderon--Zygmund estimates (\ref{eq:104}) and Sobolev inequality we have $u\in D_0^{1,2}(\R^n)$. Hence $u$ satisfies the a priori estimates (\ref{eq:88}) and  (\ref{eq:106}). Furthermore, recalling the decay estimate (\ref{23}) and the choice of $R>0$ (\ref{872}), we have
\begin{equation}\label{873}
\Vert\D u\Vert_{\infty,\R^n\setminus\overline{B(0,R)}}\leq\epsilon/2.
\end{equation}

Next we show that $\tau(u)=1$. Assume for the purpose of contradiction that $\tau(u)<1$. Then by the definition of $\tau(u)$, we have
\begin{equation}\label{874}
\begin{aligned}
&\tau(u)=\frac{1-\frac{1}{2}\theta}{\Vert \D T(u)\Vert_\infty}<1, \\ 
\text{or}\quad &\tau(u)=\frac{2K}{\Vert T(u)\Vert_{C^{1,\alpha}(\R^n)}}<1, \\
\text{or}\quad &\tau(u)=\frac{\epsilon}{\Vert DT(u)\Vert_{\infty,\R^n\setminus \overline{B(0,R)}}}<1.
\end{aligned}
\end{equation}
Since $u$ is a fixed point of $\tilde{T}$, the equalities in (\ref{874}) imply, correspondingly, that
\begin{equation*}
\begin{aligned}
&\Vert\D u\Vert_\infty=\Vert\D \tilde{T}(u)\Vert_\infty=\tau(u)\Vert \D T(u)\Vert_\infty=1-\frac{1}{2}\theta>1-\theta \\
\text{or}\quad &\Vert u\Vert_{C^{1,\alpha}(\R^n)}=\Vert \tilde{T}(u)\Vert_{C^{1,\alpha}(\R^n)}=\tau(u)\Vert T(u)\Vert_{C^{1,\alpha}(\R^n)}=2K>K \\
\text{or}\quad &\Vert\D u\Vert_{\infty,\R^n\setminus \overline{B(0,R)}}=\Vert\D \tilde{T}(u)\Vert_{\infty,\R^n\setminus \overline{B(0,R)}}=\epsilon>\frac{\epsilon}{2}.
\end{aligned}
\end{equation*}
This is a contradiction, since $u$ satisfies the a priori estimates (\ref{eq:88}) and  (\ref{eq:106}) and the estimate (\ref{873}). Hence $\tau(u)=1$ and consequently $u=\tilde{T}(u)=\tau(u)T(u)=T(u)$. Thus $u\in D_0^{1,2}(\R^n)\cap C_b^2(\R^n)$ is a fixed point of $T$ and, equivalently, solves (\ref{eq:mainequation}).
\end{proof}

Finally, Theorem \ref{th:main} follows from Theorem \ref{th:smoothexistence} by a simple approximation argument.

\begin{proof}[Proof of Theorem \ref{th:main}]
By standard approximation argument there exists a sequence $(\rho_k)_{k=1}^\infty\subset C_c^\infty(\R^n)$ such that
\begin{equation}\label{eq:96}
\Vert \rho_k-\rho\Vert_{\mathcal{X}^*}+\Vert \rho_k-\rho\Vert_{L^p(\R^n)}\xrightarrow{k\rightarrow \infty} 0.
\end{equation}
By Theorem \ref{th:smoothexistence} there exists a unique weak solution $u_k\in C^{1,\alpha}(\R^n)\cap D_0^{1,2}(\R^n)$ to the equation
\begin{equation}\label{eq:97}
-\div\Big(\frac{\D u_k}{\sqrt{1-|\D u_k|^2}}\Big)=\rho_k.
\end{equation}
Moreover, there exists a constant $\theta=\theta(n,p,\Vert \rho\Vert_{\mathcal{X}^*},\Vert\rho\Vert_p)>0$ such that
\begin{equation}\label{eq:98}
\Vert\D u_k\Vert_\infty\leq 1-\theta,
\end{equation}
for all $k\in\N$ large enough. We also have
\begin{equation}\label{eq:99}
\Vert u_k\Vert_{C^{1,\alpha}(\R^n)}\leq K,
\end{equation}
where $\alpha=\alpha(n,p,\Vert \rho\Vert_{\mathcal{X}^*},\Vert\rho\Vert_p)\in(0,1]$ and $K=K(n,p,\Vert \rho\Vert_\mathcal{X^*},\Vert\rho\Vert_p)>0$, when $k\in\N$ is large enough. By (\ref{eq:99}), the sequence $(u_k)_{k=1}^\infty$ is a sequence of uniformly bounded and equicontinuous functions. By Arzela-Ascoli theorem we may assume that, up to a subsequence, the sequence $(u_k)_{k=1}^\infty$ converges locally uniformly to $u\in C^{1,\alpha}(\R^n)$ and, similarly,  $(\D u_k)_{k=1}^\infty$ converges locally uniformly to $\D u$. By locally uniform convergence, the limit function $u$ also satisfies estimates (\ref{eq:98}) and (\ref{eq:99}). Moreover, by Lemma \ref{lm:derivativeL2estimate}, the sequence $(\D u_k)_{k=1}^\infty$ is bounded in $L^2(\R^n)$ and hence, up to a subsequence, converges weakly in $L^2(\R^n)$ to $\D u$. Thus we have $u\in D_0^{1,2}(\R^n)$ and
\begin{equation*}
\Vert u\Vert_{\D_0^{1,2}(\R^n)}\leq \Vert \rho\Vert_{\mathcal{X}^*}.
\end{equation*}
By (\ref{eq:97}), (\ref{eq:98}), the locally uniform convergence of $(D u_k)_{k=1}^\infty$, and (\ref{eq:96}), we have
\begin{equation*}
\begin{aligned}
\int_{\R^n}\langle \D F(\D u),\D \varphi\rangle\dx &=\lim_{k\rightarrow\infty}\int_{\R^n}\langle \D F(\D u_k),\D \varphi\rangle\dx \\
&=\lim_{k\rightarrow\infty}\int_{\R^n} \rho_k \varphi\dx  \\
&=\int_{\R^n} \rho \varphi\dx,
\end{aligned}
\end{equation*}
for any $\varphi\in C_c^\infty(\R^n)$. Thus $u$ is the unique weak solution to (\ref{eq:mainequation}).
\end{proof}


\end{document}